\newcolumntype{L}{>{$}l<{$}} 
\newcolumntype{C}{>{$}c<{$}}
\newtheorem{theorem}{Theorem}[section]
\newtheorem{cor}[theorem]{Corollary}
\newtheorem{prop}[theorem]{Proposition}
\newtheorem{setup}[theorem]{Setup}
\theoremstyle{definition}
\newtheorem{definition}[theorem]{Definition}
\newtheorem{obs}[theorem]{Observation}
\newtheorem{notation}[theorem]{Notation}
\theoremstyle{remark}
\newtheorem{remark}[theorem]{Remark}
\newtheorem{the context}[theorem]{The Context}
\newtheorem{question}[theorem]{Question}
\numberwithin{equation}{theorem}
\numberwithin{equation}{section}
\newcommand{\cat}[1]{\mathcal{#1}}
\newcommand{\rank}{\operatorname{rank}}
\newcommand{\coker}{\operatorname{Coker}}
\newcommand{\tor}{\operatorname{Tor}}
\newcommand{\im}{\operatorname{Im}}
\newcommand{\Ker}{\operatorname{Ker}}
\newcommand{\ideal}[1]{\mathfrak{#1}}
\newcommand{\m}{\ideal{m}}
\newcommand{\supp}{\operatorname{Supp}}
\renewcommand{\I}{\mathcal{I}}
\renewcommand{\geq}{\geqslant}
\renewcommand{\leq}{\leqslant}
\renewcommand{\ker}{\Ker}
\renewcommand{\hom}{\Hom}
\newcommand{\Hom}{\operatorname{Hom}}
\newcommand{\maps}[5]{\xymatrix{#1 \ar[r]^-{#3} & #2 \\
#4 \ar@{|->}[r] & #5 \\}}
\newcommand{\kos}{\textrm{Kos}}
\newcommand{\mfa}{\mathfrak{a}}
\def\w{\wedge}
\def\im{\operatorname{im}}
\newcommand{\sgn}{\operatorname{sgn}}
\newcommand{\rk}{\textrm{rk}}
\begin{document}
\title[Resolutions of Monomial Ideals]{Minimal Free Resolutions of Certain Equigenerated Monomial Ideals}

\subjclass{13D02, 13D07, 13C13}

\keywords{Free resolutions, monomial ideals, Schur modules, Specht modules}

\author{Keller VandeBogert }
\date{\today}

\maketitle

\begin{abstract}
    Let $R = k[x_1, \dotsc , x_n]$ denote the standard graded polynomial ring over a field $k$. We study certain classes of equigenerated monomial ideals with the property that the so-called complementary ideal has no linear relations on the generators. We then use iterated trimming complexes to deduce Betti numbers for such ideals. Furthermore, using a result on splitting mapping cones by Miller and Rahmati, we construct the minimal free resolutions for all ideals under consideration explicitly and conclude with questions about extra structure on these complexes.
\end{abstract}

\section{Introduction}

Let $(R, \m , k)$ denote a local ring. The computation of minimal free resolutions of arbitrary ideals $I \subseteq R$ is a problem that remains open, even in relatively simple cases. In this paper, we consider instead the class of monomial ideals; that is, ideals minimally generated by monomials. Such ideals seem to exist at the intersection of commutative algebra and combinatorics, and are hence the subject of a large body of research.

In \cite{taylor1966ideals}, Taylor constructed what is now called the Taylor resolution. This complex, aside from being a free resolution for any monomial ideal $I$, also possesses many other desirable properties. For instance, it always admits the structure of an associative differential graded (DG) algebra, and is cellular (see \cite{bayer1998cellular}). In general, however, this resolution is highly nonminimal. Moreover, in a now classic paper (see \cite{reisner1976cohen}), Reisner initiated the study the squarefree monomial ideals through an associated simplicial complex (the \emph{Stanley-Reisner} complex), and showed that the Betti numbers of such ideals can be computed as simplicial homology of link subcomplexes of the Stanley-Reisner complex. In theory, this gives a method of computing the Betti numbers of an arbitrary monomial ideal (since polarization will reduce to the squarefree case), but would of course require a closed form for all such aforementioned simplicial homologies.

Kaplansky posed the problem of describing the minimal free resolution of a monomial ideal in a polynomial ring. In general, this has turned out to be a very difficult problem. A large class of ideals for which an explicit minimal free resolution can be constructed is for so-called Borel-fixed (or stable) ideals. This resolution was constructed in \cite{eliahou1990minimal} and is now called the Eliahou-Kervaire resolution; it is a special case of the more general iterated mapping cone procedure. This resolution, similar to the Taylor resolution, admits the structure of a DG algebra (see \cite{peeva19960}) and is cellular (see \cite{mermin2010eliahou}). Likewise, a squarefree analogue of the Eliahou-Kervaire resolution is considered in \cite{aramova1998squarefree} and generalized in \cite{gasharov2002resolutions}, for which many of the properties of the standard Eliahou-Kervaire resolution remain valid.

Monomial ideals are a class of ideals for which combinatorial techniques have also proved very effective for the computation of such minimal free resolutions. One can reduce the study of arbitrary monomial ideals to the study of squarefree monomial ideals via polarization; once this reduction is made, there is a standard one-to-one correspondence between squarefree monomial ideals $I \subseteq k[x_1 , \dotsc , x_n]$ and simplicial complexes $\Delta$ on $n$ vertices. This perspective was introduced in \cite{miller2004combinatorial} and is used to deduce homological information of a monomial ideal $I$ based on the combinatorial data of $\Delta$. An excellent survey of this perspective, along with a collection of the literature on the topic, may be found in \cite{ha2006resolutions}.

Even more recently, the problem of a general minimal free resolution for all monomial ideals has been attacked in \cite{eagon2019minimal}. This fascinating construction relies heavily on extensive combinatorial machinery; as a result of its generality, the complex itself is not simple to construct, but has the advantage of being described almost entirely in a combinatorial fashion.

In this paper, we restrict ourselves to the case of equigenerated monomial ideals; that is, ideals generated in a single degree. A na\"ive method of obtaining such ideals is to start with the ideal generated by all monomials of degree $d$, $(x_1 , \dotsc , x_n)^d \subset k[x_1 , \dotsc , x_n]$, and then delete some of the generators. The graded minimal free resolution of $(x_1 , \dotsc , x_n)^d$ is well known (see Proposition \ref{prop:resnofpower}), and so one would only need machinery for which the Betti numbers after deleting generators could be deduced. This machinery is provided by so-called trimming complexes as in \cite{vandebogert2021trimming}.

These complexes have previously been used to resolve homogeneous grade $3$ ideals $I \subset k[x,y,z]$ defining compressed rings with socle $k(-s)^\ell \oplus k(-2s+1)$  ($\ell \geq 1$) in \cite{vandebogert2021resolution}, in which case these complexes are generically minimal. They have also been used to deduce explicit Betti numbers for certain classes of determinantal facet ideals in \cite{vandebogert2021trimming}. In this paper, we use these complexes to deduce Betti numbers and explicit minimal free resolutions of certain classes of equigenerated monomial ideals. In these particular cases, the minimal free resolutions obtained are quite simple to describe, and are computed without the use of any combinatorial tools. 

The paper is organized as follows. Section \ref{sec:introductionStuff} introduces necessary background, conventions, and definitions. In particular, we first recall the construction of the previously mentioned trimming complexes to be used for computing the desired Betti numbers. We then introduce background of Schur and Specht modules and two standard resolutions for both powers of complete intersections and the ideal generated by all squarefree monomials of a given degree in some polynomial ring. In Section \ref{sec:qmapsforschurandspecht} we build so-called $q_i$-maps for the aforementioned complexes to be used in the construction of trimming complexes.

In Sections \ref{sec:qmapsfortheLandSpechtcx} and \ref{sec:removingmultiplegens}, we compute explicit Betti tables for certain classes of equigenerated monomial ideals. In particular, we produce a large class of (squarefree) equigenerated monomial ideals with linear resolution. The definition of these ideals is phrased in terms of its so-called \emph{complementary ideal} (see Definition \ref{def:monomIdealDefs}. More precisely, we impose the condition that there are \emph{no} linear syzygies on the complementary ideal; in this case, certain maps associated to the complexes introduced in Section \ref{sec:introductionStuff} become much simpler.

Finally, in Section \ref{sec:linearstrandandres}, we use a result of Miller and Rahmati (see \cite{miller2018free}) about splitting mapping cones to compute explicit minimal free resolutions for the ideals of Section \ref{sec:removingmultiplegens} (see Theorem \ref{thm:theMinlResn}). In the case where these complexes are linear, the minimal free resolution is even simpler to describe: it is constructed as the kernel of a certain morphism of complexes (see Corollary \ref{cor:resnfordeletingpurepowers} and Theorem \ref{thm:resofsqfreetrimmed}).

\section{Trimming Complexes and Resolutions Arising from Schur/Specht modules}\label{sec:introductionStuff}

In this section, we first introduce iterated trimming complexes as constructed in \cite{vandebogert2021trimming}. The main takeaway of the trimming complexes is that a free resolution of certain ideals may be obtained by a simple mapping cone construction; in particular, this can be used to construct a free resolution of the ideal obtained by deleting some minimal generator from another ideal. Next, we give a brief overview of the construction of the $L$-complexes of Buchsbaum and Eisenbud and a complex constructed by Galetto. The building blocks for these complexes are Schur and Specht modules, respectively, and we recall some standard facts about these objects.

\subsection{Iterated Trimming Complexes}\label{sec:trimmingcx}

All proofs of the following results may be found in Section $2$ and $3$ of \cite{vandebogert2021trimming}. This first setup is needed in the construction of \emph{trimming complexes}

\begin{setup}\label{setup4}
Let $R=k[x_1,\dots, x_n]$ be a standard graded polynomial ring over a field $k$. Let $I \subseteq R$ be a homogeneous ideal and $(F_\bullet, d_\bullet)$ denote a homogeneous free resolution of $R/I$. 

Write $F_1 = F_1' \oplus \Big( \bigoplus_{i=1}^m Re_0^i \Big)$, where each $e^i_0$ generates a free direct summand of $F_1$. Using the isomorphism
$$\hom_R (F_2 , F_1 ) = \hom_R (F_2,F_1') \oplus \Big( \bigoplus_{i=1}^m \hom_R (F_2 , Re^i_0) \Big)$$
write $d_2 = d_2' + d_0^1 + \cdots + d^m_0$, where $d_2' \in \hom_R (F_2,F_1')$, $d^i_0 \in \hom_R (F_2 , Re^i_0)$. Let $\mfa_i$ denote any homogeneous ideal with
$$d^i_0 (F_2) \subseteq \mfa_i e^i_0,$$
and $(G^i_\bullet , m^i_\bullet)$ be a homogeneous free resolution of $R/\mfa_i$. 

Use the notation $K' := \im (d_1|_{F_1'} : F_1' \to R)$, $K^i_0 := \im (d_1|_{Re^i_0} : Re^i_0 \to R)$, and let $J := K' + \mfa_1 \cdot K^1_0+ \cdots + \mfa_m \cdot K_0^m$.
\end{setup}

\begin{prop}\label{prop:it1stmap}
Adopt notation and hypotheses of Setup \ref{setup4}. Then for each $i=1,\dots,m$ there exist maps $q^i_1 : F_2 \to G^i_1$ such that the following diagram commutes:
$$\xymatrix{& F_2 \ar[dl]_-{q^i_1} \ar[d]^{{d^i_0}'} \\
G_1 \ar[r]_{m_1} & \mfa \\},$$
where ${d^i_0}' : F_2 \to R$ is the composition
$$\xymatrix{F_2 \ar[r]^{d^i_0} & Re^i_0 \ar[r] & R\\},$$
the second map sending $e^i_0 \mapsto 1$. 
\end{prop}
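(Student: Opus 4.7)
The plan is to produce $q_1^i$ by a one-step lifting argument using the projectivity of $F_2$. The key observation is that the map ${d_0^i}' : F_2 \to R$ lands inside $\mathfrak{a}_i$ (by the very definition of $\mathfrak{a}_i$, which was chosen so that $d_0^i(F_2) \subseteq \mathfrak{a}_i e_0^i$), and the resolution $G^i_\bullet$ of $R/\mathfrak{a}_i$ supplies a surjection $m_1^i : G_1^i \twoheadrightarrow \mathfrak{a}_i \subseteq R = G_0^i$. Since $F_2$ is free, hence projective, a lift $q_1^i : F_2 \to G_1^i$ with $m_1^i \circ q_1^i = {d_0^i}'$ must exist.

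Concretely, the first step is to fix a homogeneous basis $\{f_j\}$ of $F_2$ and observe that each ${d_0^i}'(f_j)$ is a homogeneous element of $\mathfrak{a}_i$. The second step is to choose, for each $j$, a homogeneous preimage $g_j \in G_1^i$ under $m_1^i$ of the same internal degree as ${d_0^i}'(f_j)$; this is possible because $m_1^i$ is graded of degree $0$ and surjects onto $\mathfrak{a}_i$, so the graded piece $(G_1^i)_d \to (\mathfrak{a}_i)_d$ is surjective in each degree $d$. Setting $q_1^i(f_j) := g_j$ and extending $R$-linearly produces a homogeneous map; by construction $m_1^i(q_1^i(f_j)) = {d_0^i}'(f_j)$, so the required triangle commutes.

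The only mild subtlety is the insistence on homogeneity of $q_1^i$, but this is handled by choosing the preimages $g_j$ in the correct graded component, which works because both $m_1^i$ and ${d_0^i}'$ are degree-preserving maps of graded modules. There is no homological obstruction here beyond the standard fact that maps out of a projective module can be lifted along a surjection; no compatibility with higher differentials is required at this stage, since the statement only constructs $q_1^i$ in degree $1$ and does not yet assemble a full chain map $F_\bullet \to G^i_\bullet$. Thus the proof reduces essentially to invoking projectivity of $F_2$ plus a routine graded refinement.
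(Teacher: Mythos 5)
Your argument is correct and is essentially the standard one. The crucial observation is that $d_0^i(F_2) \subseteq \mathfrak{a}_i e_0^i$ implies ${d_0^i}'(F_2) \subseteq \mathfrak{a}_i$, and since $G^i_\bullet$ resolves $R/\mathfrak{a}_i$ the map $m_1^i : G_1^i \to \mathfrak{a}_i$ is surjective, so projectivity (freeness) of $F_2$ supplies the lift; the graded refinement by choosing homogeneous preimages of the correct degree is routine and matches the homogeneity hypotheses of the setup. This is the same approach used in the referenced source (the paper itself defers the proof to \cite{vdb2020}).
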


\begin{prop}\label{prop:ittheyexist}
Adopt notation and hypotheses as in Setup \ref{setup4}. Then for each $i=1, \dots , m$ there exist maps $q^i_k : F_{k+1} \to G^i_{k}$ for all $k \geq 2$ such that the following diagram commutes:
$$\xymatrix{F_{k+1} \ar[d]_{q^i_k} \ar[r]^-{d_{k+1}} & F_k \ar[d]^{q^i_{k-1}} \\
G^i_k \ar[r]_{m^i_k} & G^i_{k-1} \\}$$
\end{prop}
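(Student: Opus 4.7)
The plan is a standard inductive lifting argument, using the exactness of $G^i_\bullet$ and projectivity (freeness) of each $F_{k+1}$; the only mildly delicate point lies in getting the induction off the ground at $k=2$.

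Fix $i$ throughout. I will construct $q^i_k$ by induction on $k \geq 2$. The base case is to produce $q^i_2 : F_3 \to G^i_2$ with $m^i_2 q^i_2 = q^i_1 d_3$. Since $G^i_\bullet$ resolves $R/\mfa_i$, we have $\ker m^i_1 = \im m^i_2$ and the surjection $m^i_2 : G^i_2 \onto \ker m^i_1$. Because $F_3$ is free, it suffices to show $m^i_1 q^i_1 d_3 = 0$. By Proposition \ref{prop:it1stmap}, $m^i_1 q^i_1 = {d^i_0}'$, so we must verify that ${d^i_0}' d_3 = 0$, equivalently $d^i_0 d_3 = 0$ as a map $F_3 \to Re^i_0$. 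This is where the direct sum decomposition of Setup \ref{setup4} enters: since
$$d_2 = d_2' + d^1_0 + \cdots + d^m_0$$
has components landing in the distinct free summands $F_1', Re^1_0, \dots, Re^m_0$ of $F_1$, the identity $d_2 d_3 = 0$ forces each component to vanish separately. In particular $d^i_0 d_3 = 0$, and we can lift $q^i_1 d_3$ through $m^i_2$ to obtain $q^i_2$.

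The inductive step is the standard comparison-map argument. Assume $q^i_{k-1}$ and $q^i_{k-2}$ have been constructed with $m^i_{k-1} q^i_{k-1} = q^i_{k-2} d_k$ (where for $k=3$ this reads $m^i_2 q^i_2 = q^i_1 d_3$, established above). Then
$$m^i_{k-1} \, (q^i_{k-1} d_{k+1}) = q^i_{k-2} d_k d_{k+1} = 0,$$
so $q^i_{k-1} d_{k+1}$ factors through $\ker m^i_{k-1} = \im m^i_k$. Since $F_{k+1}$ is free and $m^i_k : G^i_k \onto \im m^i_k$ is surjective, we may lift to obtain $q^i_k : F_{k+1} \to G^i_k$ with $m^i_k q^i_k = q^i_{k-1} d_{k+1}$. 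To keep the construction homogeneous of degree zero, we carry out each lift on a homogeneous basis of $F_{k+1}$, which is possible because $m^i_k$ is itself homogeneous.

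The main (and really only) obstacle is verifying the base case identity ${d^i_0}' d_3 = 0$; once that is in place, the rest is the usual projective-lifting induction. The decomposition of $F_1$ as a direct sum with $Re^i_0$ as one of the summands is precisely what ensures the individual components $d^i_0$ of $d_2$ each compose to zero with $d_3$, making the whole construction go through.
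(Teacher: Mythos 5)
Your proof is correct and takes the standard comparison-map lifting approach that the paper (via its citation to \cite{vdb2020}) relies on: the base case reduces to checking ${d^i_0}'d_3 = 0$, which you rightly extract componentwise from $d_2 d_3 = 0$ using the direct sum decomposition of $F_1$, and the inductive step is the usual factorization through $\ker m^i_{k-1} = \im m^i_k$ followed by lifting along the free module $F_{k+1}$, with the homogeneity remark taking care of gradedness.
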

The main result of this subsection is the following:

\begin{theorem}\label{thm:itres}
Adopt notation and hypotheses as in Setup \ref{setup4}. Then the mapping cone of the morphism of complexes
\begin{equation}\label{itcomx}
\xymatrix{\cdots \ar[r]^{d_{k+1}} &  F_{k} \ar[ddd]^{\begin{pmatrix} q_{k-1}^1 \\
\vdots \\
q_{k-1}^m \\
\end{pmatrix}}\ar[r]^{d_{k}} & \cdots \ar[r]^{d_3} & F_2 \ar[rrrr]^{d_2'} \ar[ddd]^{\begin{pmatrix} q_1^1 \\
\vdots \\
q_1^m \\
\end{pmatrix}} &&&& F_1' \ar[ddd]^{d_1} \\
&&&&&&& \\
&&&&&&& \\
\cdots \ar[r]^-{\bigoplus m^i_k} & \bigoplus_{i=1}^m G^i_{k-1} \ar[r]^-{\bigoplus m^i_{k-1}} & \cdots \ar[r]^-{\bigoplus m^i_2} & \bigoplus_{i=1}^m G^i_1 \ar[rrrr]^-{-\sum_{i=1}^\ell m^i_1(-)\cdot d_1(e^i_0)} &&&& R \\}\end{equation}
is acyclic and forms a free resolution of $R/J$. 
\end{theorem}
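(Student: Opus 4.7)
The approach is to verify acyclicity of the mapping cone $C$ via the long exact sequence in homology associated to the standard short exact sequence of complexes $0 \to Y \to C \to X[-1] \to 0$, where $X$ denotes the top row of \eqref{itcomx} regarded as a complex (with $F_1'$ in degree zero), $Y$ the bottom row (with $R$ in degree zero), and $\phi\colon X \to Y$ is the chain map built from $d_1$ and the $q^i_\bullet$'s. A direct computation of the last differential of the cone immediately gives $H_0(C) = R/(K' + \sum_i \mfa_i K_0^i) = R/J$, matching the claim.

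For the higher homology I would compute $H_\bullet(X)$ and $H_\bullet(Y)$ separately. Exactness of $F_\bullet$ forces $H_n(X) = 0$ for $n \geq 2$, while $H_1(X) = \ker(d_2')/\im(d_3)$ embeds into $\bigoplus_i R e_0^i$ via $[x] \mapsto \sum_i d_0^i(x) e_0^i$, and $H_0(X) = F_1'/d_2'(F_2)$. Since each $G^i_\bullet$ resolves $R/\mfa_i$ and $Y_{\geq 1}$ is a direct sum of these complexes, $H_n(Y) = 0$ for $n \geq 2$; moreover $H_1(Y)$ is identified with the module of tuples $(a_1,\ldots,a_m) \in \bigoplus_i \mfa_i$ satisfying $\sum_i a_i \cdot d_1(e_0^i) = 0$, and $H_0(Y) = R/(\sum_i \mfa_i K_0^i)$.

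The key step, and the main obstacle, is analyzing the connecting homomorphism induced by $\phi$. By the defining property of $q^i_1$ from Proposition \ref{prop:it1stmap}, namely $m^i_1 \circ q^i_1 = d_0^i$, the connecting map $\partial_1\colon H_1(X) \to H_1(Y)$ is given by $[x] \mapsto (d_0^i(x))_i$; injectivity is immediate from the embedding of $H_1(X)$ into $\bigoplus_i R e_0^i$. Surjectivity is the nontrivial part: given $(a_i)$ with $\sum_i a_i d_1(e_0^i) = 0$, the element $\sum_i a_i e_0^i \in F_1$ lies in $\ker(d_1) = \im(d_2)$, and any preimage $z \in F_2$ automatically satisfies $d_2'(z) = 0$ and $d_0^i(z) = a_i$. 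A parallel lifting argument shows that $\partial_0\colon H_0(X) \to H_0(Y)$ is injective with cokernel $R/J$. Feeding both facts into the long exact sequence yields $H_n(C) = 0$ for all $n \geq 1$, completing the verification of acyclicity.
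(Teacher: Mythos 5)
Your argument is correct and complete. Since the paper itself defers the proof of Theorem~\ref{thm:itres} to \cite{vdb2020}, I cannot compare step-by-step against a proof printed here, but the route you take---analyzing the long exact homology sequence
$$H_1(X) \xrightarrow{\phi_*} H_1(Y) \longrightarrow H_1(C) \longrightarrow H_0(X) \xrightarrow{\phi_*} H_0(Y) \longrightarrow H_0(C) \longrightarrow 0$$
for the cone $C$---is the natural one, and each claim you make checks out. The computations of $H_\bullet(X)$ and $H_\bullet(Y)$ in degrees $\geq 2$ are immediate from exactness of $F_\bullet$ and of the $G^i_\bullet$; the identification of $H_1(Y)$ with syzygies $(a_i) \in \bigoplus_i \mfa_i$ of the $d_1(e_0^i)$ uses that $\im m_1^i = \mfa_i$ and $\ker m_1^i = \im m_2^i$; the embedding of $H_1(X)$ into $\bigoplus_i R e_0^i$ is well defined and injective because the components $d_2'$, $d_0^i$ of $d_2$ land in independent direct summands of $F_1$; and Proposition~\ref{prop:it1stmap} converts $(q_1^i)_*$ into $({d_0^i}')_i$. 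The two ``lifting'' steps---surjectivity of $\phi_*$ on $H_1$ and injectivity on $H_0$---are both clean applications of $\ker d_1 = \im d_2$ followed by reading off components in $F_1 = F_1' \oplus \bigoplus_i Re_0^i$, and they deliver exactly the vanishing of $H_n(C)$ for $n \geq 1$ together with $H_0(C) = \coker(\phi_* \colon H_0(X) \to H_0(Y)) = R/J$.

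Two very small points worth tightening if you were to write this out in full. First, when you write $d_0^i(z) = a_i$ you should really write $d_0^i(z) = a_i e_0^i$, equivalently ${d_0^i}'(z) = a_i$, since $d_0^i$ has target $Re_0^i$ rather than $R$. Second, the phrase ``a parallel lifting argument shows $\partial_0$ is injective with cokernel $R/J$'' compresses the one genuinely nontrivial remaining verification; it is worth spelling out that if $d_1(y) = \sum_i a_i d_1(e_0^i)$ with $y \in F_1'$ and $a_i \in \mfa_i$, then $y - \sum_i a_i e_0^i \in \ker d_1 = \im d_2$, and projecting a preimage under $d_2$ onto $F_1'$ exhibits $y \in d_2'(F_2)$. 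With these small clarifications the proof stands as written.
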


\begin{definition}\label{def:trimmingCxDef}
The \emph{iterated trimming complex} associated to the data of Setup \ref{setup4} is the mapping cone of the morphism of complexes of Theorem \ref{thm:itres}.
\end{definition}

 As an immediate consequence, one obtains a consequence for computing (graded) Betti numbers:

\begin{cor}\label{cor:ittorrk}
Adopt notation and hypotheses of Setup \ref{setup4}. Assume furthermore that the complexes $F_\bullet$ and $G_\bullet$ are minimal. Then for $i \geq 2$,
$$\dim_k \tor_i^R (R/J , k) = \rank F_i + \sum_{j=1}^m  \rank G^j_i - \rank \Bigg( \begin{pmatrix} q_i^1 \\
\vdots \\
q_i^m \\
\end{pmatrix} \otimes k\Bigg) - \rank \Bigg( \begin{pmatrix} q_{i-1}^1 \\
\vdots \\
q_{i-1}^m \\
\end{pmatrix} \otimes k\Bigg).$$
Similarly,
$$\mu (J) = \mu(K) -m + \sum_{j=1}^m \mu(\mfa_j) -\rank \Bigg( \begin{pmatrix} q_1^1 \\
\vdots \\
q_1^m \\
\end{pmatrix} \otimes k\Bigg) .\qquad \qquad \square $$ 
\end{cor}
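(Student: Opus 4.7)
The plan is to invoke Theorem~\ref{thm:itres} and read off the Betti numbers from the mapping cone after tensoring with $k$. Since the iterated trimming complex $(P_\bullet,\partial_\bullet)$ is a free resolution of $R/J$, we have $\tor_i^R(R/J,k)=H_i(P_\bullet\otimes_R k)$, so
$$\dim_k \tor_i^R(R/J,k)\;=\;\rank P_i-\rank(\partial_i\otimes k)-\rank(\partial_{i+1}\otimes k).$$
My approach is to compute $\rank P_i$ and $\rank(\partial_i\otimes k)$ independently, reading each off the block structure inherited from the mapping cone.

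Reading the diagram in \eqref{itcomx}, the module in homological degree $i\geq 2$ is $P_i=F_i\oplus\bigoplus_{j=1}^m G^j_i$, while $P_1=F_1'\oplus\bigoplus_{j=1}^m G^j_1$. Minimality of $F_\bullet$ gives $\rank F_1=\mu(K)$ and hence $\rank F_1'=\mu(K)-m$; similarly $\rank G^j_1=\mu(\mfa_j)$. This accounts for the bulk contributions $\rank F_i+\sum_j\rank G^j_i$ and $\mu(K)-m+\sum_j\mu(\mfa_j)$ appearing in the two formulas.

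For the differentials, the standard mapping cone recipe gives the block form
$$\partial_i\;=\;\begin{pmatrix}-d_i & 0\\ q_{i-1} & \bigoplus_{j=1}^m m^j_i\end{pmatrix}\qquad (i\geq 2),$$
where $q_{i-1}$ denotes the column vector with entries $q_{i-1}^1,\dotsc,q_{i-1}^m$ (for $i=2$ the top-left entry is the restriction $d_2'$, a distinction that vanishes after tensoring with $k$). Minimality of $F_\bullet$ and of each $G^j_\bullet$ forces the diagonal blocks to vanish modulo $\m$, so $\rank(\partial_i\otimes k)=\rank(q_{i-1}\otimes k)$ for $i\geq 2$. For $i=1$, the image of $\partial_1$ lies inside $J\subseteq\m$, so $\partial_1\otimes k=0$. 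Substituting these into the general formula for $\dim_k\tor_i$ yields the two claimed identities. The one point that requires genuine care is keeping the mapping cone indexing straight — namely, that the chain map block in $\partial_i$ is $q_{i-1}$ rather than $q_i$, so that $\tor_i$ involves both $q_i$ (via $\partial_{i+1}$) and $q_{i-1}$ (via $\partial_i$); beyond this bookkeeping no substantive obstacle arises.
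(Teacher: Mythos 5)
Your proof is correct and matches what the paper treats as the immediate consequence of Theorem~\ref{thm:itres}: read the modules and differentials off the mapping cone block structure, observe that minimality of $F_\bullet$ and the $G^j_\bullet$ kills the diagonal blocks modulo $\m$, and apply $\dim_k\tor_i = \rank P_i - \rank(\partial_i\otimes k) - \rank(\partial_{i+1}\otimes k)$. Your bookkeeping of the homological shift (the chain-map block in $\partial_i$ being $q_{i-1}$, not $q_i$) and the remark that the $d_2'$ versus $d_2$ distinction disappears after $\otimes k$ are exactly the points worth checking, and you handle them correctly.
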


\begin{remark}
Observe that one may restrict to homogeneous pieces in Corollary \ref{cor:ittorrk} to obtain graded Betti numbers as well.
\end{remark}

\subsection{L-Complexes And Resolutions of Squarefree Monomials}\label{sec:Lcompandresofsqfree}

The material up until Proposition \ref{prop:resnofpower}, along with proofs, can be found in \cite{buchsbaum1975generic} or Section $2$ of \cite{el2014artinian}. This first setup will be needed for the construction of the $L$-complexes of Buchsbaum and Eisenbud.

\begin{setup}\label{set:Lcomplexsetup}
Let $F$ denote a free $R$-module of rank $n$, and $S = S(F)$ the symmetric algebra on $F$ with the standard grading. Define a complex
$$\xymatrix{\cdots \ar[r] & \bigwedge^{a+1} F \otimes_R S_{b-1} \ar[r]^-{\kappa_{a+1,b-1}} & \bigwedge^{a} F \otimes_R S_{b} \ar[r]^-{\kappa_{a,b}} & \cdots}$$
where the maps $\kappa_{a,b}$ are defined as the composition
\begin{equation*}
    \begin{split}
         \bigwedge^{a} F \otimes_R S_{b} &\to \bigwedge^{a-1} F \otimes_R F \otimes_R S_{b} \\
         & \to \bigwedge^{a-1} F \otimes_R S_{b+1}
    \end{split}
\end{equation*}
where the first map is comultiplication in the exterior algebra and the second map is the standard module action (where we identify $F = S_1 (F)$). Define
$$L_b^a (F) := \ker \kappa_{a,b}.$$
Let $\psi: F \to R$ be a morphism of $R$-modules with $\im (\psi)$ an ideal of grade $n$. Let $\kos^\psi : \bigwedge^i F \to \bigwedge^{i-1} F$ denote the standard Koszul differential; that is, the composition
\begin{equation*}
    \begin{split}
        \bigwedge^i F &\to F \otimes_R \bigwedge^{i-1} F  \quad \textrm{(comultiplication)} \\
        &\to \bigwedge^{i-1} F \quad \textrm{(module action)} \\
    \end{split}
\end{equation*}
\end{setup}

\begin{definition}\label{def:Lcomplexes}
Adopt notation and hypotheses of Setup \ref{set:Lcomplexsetup}. Define the complex
\begin{equation*}
    \begin{split}
        &L(\psi , b) : \xymatrix{0 \ar[r] & L_b^{n-1} \ar[rr]^-{\kos^\psi \otimes 1} & & \cdots \ar[rr]^{\kos^\psi \otimes 1} & & L_b^0 \ar[r]^-{S_b (\psi)} & R \ar[r] & 0 } \\
    \end{split}
\end{equation*}
where $\kos^\psi \otimes 1  : L_b^a (F) \to L_b^{a-1}$ is induced by making the following diagram commute:
$$\xymatrix{\bigwedge^a F\otimes S_b (F) \ar[rr]^-{\kos^\psi \otimes 1}  & & \bigwedge^{a-1} F \otimes S_b(F)  \\
L_b^a (F) \ar[rr]^-{\kos^\psi \otimes 1} \ar[u] & & L_b^{a-1} (F) \ar[u] \\}$$
\end{definition}

\begin{prop}\label{prop:resnofpower}
Let $\psi: F \to R$ be a map from a free module $F$ of rank $n$ such that the image  $\im (\psi)$ is a grade $n$ ideal. Then the complex $L(\psi ,b)$ of Definition \ref{def:Lcomplexes} is a minimal free resolution of $R/\im (\psi)^b$
\end{prop}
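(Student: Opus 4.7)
The plan is to split the statement into acyclicity and minimality, handle minimality as an easy observation, and then prove acyclicity by induction on $b$, using the short exact sequences that come from the exactness of the Koszul-symmetric bicomplex.

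\textbf{Minimality.} Since $\im(\psi)$ has grade $n$ in the $n$-dimensional standard graded polynomial ring $R$, it is $\mathfrak{m}$-primary; in particular $\im(\psi)\subseteq\mathfrak{m}$. Every differential in $L(\psi,b)$ is built by functorial operations (exterior comultiplication, symmetric multiplication, and the Koszul-type differential $\kos^\psi$) out of $\psi$, so its matrix entries lie in $\im(\psi)\subseteq\mathfrak{m}$. Thus minimality is automatic once acyclicity is known, and I focus on the latter.

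\textbf{Acyclicity, base case $b=1$.} The Koszul complex of $S=S(F)$ as an $S$-module is exact, which means the complex
$$\cdots\to \bigwedge^{a+1} F\otimes S_{b-1}\xrightarrow{\kappa_{a+1,b-1}}\bigwedge^{a} F\otimes S_b\xrightarrow{\kappa_{a,b}}\bigwedge^{a-1} F\otimes S_{b+1}\to\cdots$$
is exact for every bidegree. Taking kernels and images gives, for all $a,b\geq 1$, a short exact sequence
$$0\to L_{b-1}^{a+1}\to \bigwedge^{a+1}F\otimes S_{b-1}(F)\to L_b^{a}\to 0.$$
When $b=1$, the comultiplication $\bigwedge^{a+1}F\hookrightarrow \bigwedge^{a}F\otimes F$ is injective, which forces $L_0^{a+1}=0$ and hence $L_1^{a}\cong \bigwedge^{a+1}F$ via the above sequence. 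Under this identification, one checks directly from the defining formulas that $\kos^\psi\otimes 1$ is just the Koszul differential on $\psi$. Since $\im(\psi)$ contains a regular sequence of length $n=\rank F$, the Koszul complex on $\psi$ is a free resolution of $R/\im(\psi)$, proving the case $b=1$.

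\textbf{Inductive step.} Assume $L(\psi,b-1)$ resolves $R/\im(\psi)^{b-1}$. The key is to assemble the pointwise exact sequences above into a short exact sequence of complexes
$$0\longrightarrow L(\psi,b-1)[1]\longrightarrow K(\psi)\otimes S_{b-1}(F)\longrightarrow L(\psi,b)\longrightarrow 0,$$
where $K(\psi)$ is the Koszul complex on $\psi$ and the shift accounts for the fact that $L_{b-1}^{a+1}$ appears one homological spot to the left of $L_b^a$. I would verify compatibility with differentials using the (anti)commutation of $\kos^\psi$ and $\kappa$: the two differentials live on the bicomplex $\bigwedge^\bullet F\otimes S_\bullet(F)$, one preserving $S$-degree and one shifting it, and their standard anticommutation relation is exactly what is needed. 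Taking the long exact sequence in homology, the middle complex $K(\psi)\otimes S_{b-1}(F)$ has only $\mathrm{H}_0=R/\im(\psi)\otimes S_{b-1}(F)$ (regular sequence hypothesis), and the inductive hypothesis gives $\mathrm{H}_0(L(\psi,b-1))=R/\im(\psi)^{b-1}$ with no higher homology, so all higher homology of $L(\psi,b)$ vanishes and $\mathrm{H}_0(L(\psi,b))$ is computed as a cokernel that one identifies with $R/\im(\psi)^b$, using that $S_{b-1}(F)\twoheadrightarrow \im(\psi)^{b-1}/\im(\psi)^{b}$ and the fitting of these quotients in the natural short exact sequence $0\to \im(\psi)^{b-1}/\im(\psi)^b\to R/\im(\psi)^b\to R/\im(\psi)^{b-1}\to 0$.

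\textbf{Main obstacle.} The technical heart is the middle step: constructing the short exact sequence of complexes and verifying that the connecting morphisms really do intertwine the $\kos^\psi$-differentials. This reduces to a diagram chase on $\bigwedge^\bullet F\otimes S_\bullet(F)$ using the standard identities for comultiplication, symmetric multiplication, and $\kos^\psi$, but it must be set up carefully so that the induced identification of $\mathrm{H}_0$ with $R/\im(\psi)^b$ (rather than some other quotient) falls out cleanly from the snake lemma.
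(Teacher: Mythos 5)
The paper does not supply its own argument for this proposition; the sentence preceding it defers to \cite{buchsbaum1975generic} and Section~2 of \cite{el2014artinian}. So there is no in-paper proof to compare against directly, and your sketch should be judged on its own terms and against those references. Buchsbaum--Eisenbud's original argument is not the inductive filtration you describe: they establish acyclicity of the family $L(\psi,b)$ by a grade-sensitivity/acyclicity-lemma type argument (reducing to the generic situation and controlling depth of homology), whereas you induct on $b$ using the Cauchy exact sequences $0\to L_{b-1}^{a+1}\to\bigwedge^{a+1}F\otimes S_{b-1}\to L_b^a\to 0$. Both routes are legitimate; yours is more elementary and self-contained, while theirs gives additional information (generic perfection, behavior under base change) that the inductive argument does not immediately yield.

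Your plan is essentially sound, and the minimality observation and the base case $b=1$ are fine. One thing to fix before this can be called a proof: the short exact sequence of complexes as you wrote it,
$$0\to L(\psi,b-1)[1]\to K(\psi)\otimes S_{b-1}(F)\to L(\psi,b)\to 0,$$
does not close up at the bottom. Whatever shift convention you adopt, $L(\psi,b-1)$ carries an $R$ in homological degree $0$, and there is no $R$-summand of $K(\psi)\otimes S_{b-1}(F)$ for it to include into; equivalently, the would-be augmentation square forces the degree-zero map $S_{b-1}(F)\to R$ to be $S_{b-1}(\psi)$, which is not surjective for $b\geq 2$, so the cokernel in degree $0$ is not $R$. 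The correct statement is that after deleting the augmentations one gets a genuine short exact sequence of complexes
$$0\to A_\bullet\to\widetilde K(\psi)\otimes S_{b-1}(F)\to\widetilde L(\psi,b)\to 0,$$
where $\widetilde K$ and $\widetilde L$ are the truncated complexes resolving $\im(\psi)$ and $\im(\psi)^b$ respectively, and $A_a=L_{b-1}^{a+1}$. The long exact sequence plus the inductive hypothesis then kills $H_a(\widetilde L(\psi,b))$ for $a\geq 1$, and $H_0$ is identified as $\coker\big(\ker S_{b-1}(\psi)\hookrightarrow\im(\psi)\cdot S_{b-1}(F)\big)$, which one checks equals $\im(\psi)^b$ via $S_{b-1}(\psi)$ and the observation that $\ker S_{b-1}(\psi)=(\psi\otimes 1)(L_{b-1}^1)\subseteq\im(\psi)\cdot S_{b-1}(F)$; re-augmenting then gives $R/\im(\psi)^b$. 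You flagged this as the technical heart, and you were right; with the truncation fixed it does go through.
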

We also have (see Proposition $2.5 (c)$ of \cite{buchsbaum1975generic})
\begin{equation*}
    \begin{split}
        &\rank_R L_b^a (F) = \binom{n+b-1}{a+b} \binom{a+b-1}{a}. \\
    \end{split}
\end{equation*}
Moreover, using the notation and language of Chapter $2$ of \cite{weyman2003}, $L_b^a (F)$ is the Schur module $L_{(a+1,1^{b-1})} (F)$. This allows us to identify a standard basis for such modules.

\begin{notation}
We use the English convention for partition diagrams. That is, the partition $(3,2,2)$ corresponds to the diagram
$$ 
\begin{ytableau}
 \ & \ & \ \\
\ & \ \\
\ & \ \\
\end{ytableau}.$$
A Young tableau is standard if it is strictly increasing in both the columns and rows. It is semistandard if it is strictly increasing in the columns and nondecreasing in the rows.
\end{notation}

\begin{prop}\label{prop:standardbasis}
Adopt notation and hypotheses as in Setup \ref{set:Lcomplexsetup}. Then a basis for $L_b^a (F)$ is represented by all Young tableaux of the form
$$\ytableausetup
{boxsize=2em}
\begin{ytableau}
i_0 & j_1 &\cdots & j_{b-1} \\
i_1 \\
\vdots \\
i_a \\
\end{ytableau}$$
with $i_0 < \cdots < i_{a}$ and $i_0 \leq j_1 \leq \cdots \leq j_{b-1}$. 
\end{prop}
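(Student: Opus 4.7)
The plan is to identify $L_b^a(F)$ with the Schur module $L_{(a+1,1^{b-1})}(F)$ noted just before the statement, and then translate the semistandard-tableau basis of that Schur module into the hook-shaped tableaux in the claim.

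First, I would observe that the complex in Setup~\ref{set:Lcomplexsetup} is the degree-$(a+b)$ strand of the standard Koszul complex of the inclusion $F \hookrightarrow S_1(F)$ on $S(F)$, and is therefore exact for $a+b \geq 1$. Consequently $L_b^a(F) = \ker \kappa_{a,b} = \operatorname{im}\kappa_{a+1,b-1}$, and I would send each hook tableau of the statement to the element
$$\kappa_{a+1,b-1}\bigl(e_{i_0} \wedge e_{i_1} \wedge \cdots \wedge e_{i_a} \otimes e_{j_1}\cdots e_{j_{b-1}}\bigr) \in L_b^a(F).$$
Because of antisymmetry in $\bigwedge^{a+1}F$ and symmetry in $S_{b-1}$, every preimage under $\kappa_{a+1,b-1}$ can be reduced to a linear combination of such tensors with $i_0<i_1<\cdots<i_a$ and $i_0 \leq j_1 \leq \cdots \leq j_{b-1}$; equivalently, this is the hook-shape straightening law for the Schur module $L_{(a+1,1^{b-1})}(F)$, which is standard (see Chapter~2 of \cite{weyman2003}).

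To conclude that these hook tableaux form a basis, I would check that their cardinality matches the rank formula $\binom{n+b-1}{a+b}\binom{a+b-1}{a}$ quoted after Proposition~\ref{prop:resnofpower}. Conditioning on $i_0$, there are $\binom{n-i_0}{a}$ choices for $\{i_1,\dots,i_a\}\subseteq\{i_0+1,\dots,n\}$ and $\binom{n-i_0+b-1}{b-1}$ multisets for $(j_1,\dots,j_{b-1})\in\{i_0,\dots,n\}^{b-1}$. The identity
$$\binom{k}{a}\binom{k+b-1}{b-1}=\binom{a+b-1}{a}\binom{k+b-1}{a+b-1}$$
combined with the hockey-stick identity then collapses $\sum_{k=0}^{n-1}\binom{k}{a}\binom{k+b-1}{b-1}$ to exactly $\binom{n+b-1}{a+b}\binom{a+b-1}{a}$, matching the rank and forcing the hook tableaux to be linearly independent.

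The step most likely to require real care is the straightening/spanning argument in the first paragraph. For a general partition this is the heart of Schur-module theory, but for hook shapes it reduces to reordering column entries (incurring signs from the wedge) and reordering row entries (no signs, because of symmetry); one can therefore carry it out by hand via induction on a suitable lexicographic order on tableaux, or simply appeal to the classical result.
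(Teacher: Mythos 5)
Your argument is correct and follows the same route as the paper, which simply cites Proposition 2.1.4 of Weyman's book for a more general straightening result on Schur modules. You have additionally supplied the dimension count (which checks out: the identity $\binom{k}{a}\binom{k+b-1}{b-1}=\binom{a+b-1}{a}\binom{k+b-1}{a+b-1}$ together with the hockey-stick identity does recover $\binom{n+b-1}{a+b}\binom{a+b-1}{a}$), but the spanning step still rests on the same classical straightening law that the paper defers to.
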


\begin{proof}
See Proposition $2.1.4$ of \cite{weyman2003} for a more general statement.
\end{proof}

\begin{remark}\label{rk:rkonstdbasis}
Adopt notation and hypotheses of Setup \ref{set:Lcomplexsetup}. Let $F$ have basis $f_1, \dotsc, f_n$. In the statement of Proposition \ref{prop:standardbasis}, we think of the tableau as representing the element
$$\kappa_{a+1,b-1} (f_{i_1} \w \cdots \w f_{i_{a+1}} \otimes f_{j_1} \cdots f_{j_{b-1}}) \in \bigwedge^a F \otimes S_b (F).$$
We will often write $f_{i_1} \w \cdots \w f_{i_{a+1}} \otimes f_{j_1} \cdots f_{j_{b-1}} \in L_b^a (F)$, with the understanding that we are identifying $L_b^a (F)$ with the cokernel of $\kappa_{a+2,b-2} : \bigwedge^{a+2} F \otimes S_{b-2} (F) \to \bigwedge^{a+1} F \otimes S_{b-1} (F)$.
\end{remark}

Next, we give a brief introduction of Specht modules and define the complex constructed by Galetto in \cite{galetto2020ideal}. The construction of Specht modules used here may be considered the dual construction, as in $7.4$ of \cite{fulton1997young}. Instead of the more standard presentation using row tabloids, the Specht modules here are constructed as the quotient of all column tabloids by the so-called straightening relations. 

\begin{definition}\label{def:spechtmodules}
Let $\lambda$ be a partition and $k$ a field. A column tabloid $[T]$ is an equivalence class of a tableau $T$ modulo alternating columns.

Let $M^\lambda$ denote the formal span of all column tabloids of shape $\lambda$. Define the map $\pi_{j,k} : M^\lambda \to M^\lambda$ by sending $[T] \mapsto \sum [S]$, where the sum is over all tableau $S$ obtained from $T$ by exchanging the top $k$ elements of the $(j+1)$st column with the $k$ elements in the $j$th column of $T$, while preserving the vertical order of each set of $k$ elements.

Let $\mu = \lambda^t$ denote the transpose partition. Then the maps $\pi_{j,k}$ are defined for $1 \leq j \leq \lambda_1-1$, $1 \leq k \leq \mu_{j+1}$. Define the submodule $Q^\lambda \subset M^\lambda$ to be the subspace spanned by all elements of the form
$$[T] - \pi_{j,k} ([T]),$$
where $j, \ k$ vary as above.

Then, with notation as above, define the Specht module $S^{\lambda}$ to be the quotient $M^\lambda / Q^\lambda$.
\end{definition}

\begin{definition}\label{def:sqfreecomplexpieces}
Let $d \leq n$ be integers. Define
\begingroup\allowdisplaybreaks
\begin{align*}
    U_i^{d,n} &= \textrm{Ind}_{S_{d+i} \times S_{n-d-i}}^{S_n} \big( S^{(d,1^i)} \otimes S^{(n-d-i)} \big) \\
    &= \bigoplus_{\sigma} \sigma \big( S^{(d,1^i)} \otimes S^{(n-d-i)} \big)
\end{align*}
\endgroup
where the direct sum is taken over all coset representatives for $S_{d+i} \times S_{n-d-i}$. 
\end{definition}

\begin{definition}\label{def:freddifferentials}
Let $R= k[x_1 , \dotsc , x_n]$ where $k$ is a field. Let $1 \leq d \leq n$ and $1 \leq i \leq n-d+1$. Define
$$F_i^{d,n} := U_{i-1}^{d,n} \otimes_k R(-d-i+1),$$
where $U_i^{d,n}$ is as in Definition \ref{def:sqfreecomplexpieces}. Given any Tableau $T$, define the differential
$$\partial_i^{d,n} ( [ T ] ) := \sum_{j=0}^i (-1)^{i-j} x_{a_j} [ T \backslash a_j ],$$
where
$$T:= \ytableausetup
{boxsize=2em}
\begin{ytableau}
a_1 & b_1 & \cdots & b_{d-1} \\
a_2 \\
\vdots \\
a_{i} \\
\end{ytableau},$$
and $i>1$. When $i=1$, define
$$\partial_1^{d,n} \Bigg( \ytableausetup{boxsize=2em}\begin{ytableau}
a_1 & b_1 & \cdots & b_{d-1} \\
\end{ytableau} \Bigg) = x_{a_1} x_{b_1} \cdots x_{b_{d-1}}.$$
Let $F_\bullet^{d,n}$ denote the complex
$$\xymatrix{0 \ar[r] & F_{n-d+1}^{d,n} \ar[rr]^-{\partial_{n-d+1}^{d,n}} &  & \cdots  \ar[r]^{\partial_2^{d,n}} & F_1^{d,n} \ar[r]^-{\partial_1^{d,n}} & R \\}.$$
\end{definition}

\begin{theorem}[\cite{galetto2020ideal}, Theorem 4.11]\label{thm:fredsres}
Let $n$ and $d$ be integers with $1 \leq d \leq n$. Then the complex $F_\bullet^{d,n}$ of Definition \ref{def:freddifferentials} is a $S_n$-equivariant minimal free resolution of quotient ring defined by the ideal generated by all squarefree monomials of degree $d$ in $R$.
\end{theorem}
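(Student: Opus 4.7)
Denote by $I_{d,n} \subseteq R$ the ideal generated by all squarefree monomials of degree $d$. The plan is to establish four properties of $F_\bullet^{d,n}$ in sequence: that it is a well-defined complex, that $H_0(F_\bullet^{d,n}) = R/I_{d,n}$, that it is acyclic in positive homological degrees, and that it is both minimal and $S_n$-equivariant.

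First, one verifies that each $\partial_i^{d,n}$ descends to a well-defined map on the Specht module $S^{(d,1^{i-1})}$ by checking that it annihilates the straightening relations from Definition \ref{def:spechtmodules}; this reduces to a direct computation on generators of the form $[T] - \pi_{j,k}([T])$. Next, $\partial^2 = 0$ follows from a sign-cancellation argument: in the double composition, removing a pair $(a_p, a_q)$ of column entries in either order yields the same monomial but with opposite signs, exactly as for the Koszul differential. The identification of $H_0$ with $R/I_{d,n}$ is immediate from the formula for $\partial_1^{d,n}$, since its image consists precisely of all products $x_{a_1}x_{b_1}\cdots x_{b_{d-1}}$ with distinct indices, and these exhaust the generators of $I_{d,n}$.

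The main obstacle is acyclicity. The plan is to induct on $n$, starting from the trivial case $d=n$ in which $I_{d,n}$ is principal and generated by $x_1\cdots x_n$. Splitting the generators of $I_{d,n}$ according to whether they are divisible by $x_n$ yields the short exact sequence
\begin{equation*}
0 \to (R/I_{d-1,n-1})(-1) \xrightarrow{\,\cdot x_n\,} R/I_{d,n} \to R/\bigl(I_{d,n-1}+(x_n)\bigr) \to 0.
\end{equation*}
Applying the inductive hypothesis to resolve the two outer quotients produces a free resolution of $R/I_{d,n}$ as the mapping cone of a lift of multiplication by $x_n$. Identifying this mapping cone with $F_\bullet^{d,n}$ is the crux, and relies on the branching rule for hook-shape Specht modules: restricting $S^{(d,1^i)}$ from $S_{d+i}$ to $S_{d+i-1}$ decomposes as $S^{(d-1,1^i)} \oplus S^{(d,1^{i-1})}$. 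Combining this with Frobenius reciprocity shows that the decomposition of $U_{i-1}^{d,n}$ according to whether $n$ appears in a basis tableau matches the mapping cone decomposition term by term. Checking that the induced differentials coincide with the claimed $\partial_i^{d,n}$ is most cleanly done by tracking the action on tableau bases.

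Finally, minimality is immediate because every entry of $\partial_i^{d,n}$ is a single variable lying in the maximal ideal $(x_1,\ldots,x_n)$. The $S_n$-equivariance follows from the construction of $U_{i-1}^{d,n}$ as an induced representation together with the observation that $\partial_i^{d,n}$ is defined purely by deletion of tableau entries followed by multiplication by the corresponding variable, both operations being natural with respect to permutations of the index set $\{1,\ldots,n\}$.
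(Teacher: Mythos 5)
This statement is quoted from Galetto (it is labeled with the citation $[\text{galetto2016ideal}, \text{Theorem }4.11]$), and the paper supplies no proof of its own, so there is nothing in the source to compare your argument against; the evaluation below is on its own merits.

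Your general outline (well-definedness, $\partial^2 = 0$, identification of $H_0$, acyclicity by induction, minimality from linearity, equivariance by construction) is sound, and the short exact sequence
\[
0 \to (R/I_{d-1,n-1})(-1) \xrightarrow{\cdot x_n} R/I_{d,n} \to R/\bigl(I_{d,n-1}+(x_n)\bigr) \to 0
\]
is correct: one checks directly that $(I_{d,n}:x_n) = I_{d-1,n-1}$ and $I_{d,n}+(x_n) = I_{d,n-1}+(x_n)$, and the rank count $\binom{n}{d+i-1}\binom{d+i-2}{i-1} = \binom{n-1}{d+i-2}\binom{d+i-3}{i-1} + \binom{n-1}{d+i-1}\binom{d+i-2}{i-1} + \binom{n-1}{d+i-2}\binom{d+i-3}{i-2}$ does balance. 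But there is a genuine gap in the acyclicity step: you say the free resolution of $R/I_{d,n}$ arises as the \emph{mapping cone} of a lift of multiplication by $x_n$. The mapping cone construction applies when the object you want to resolve is the \emph{cokernel}: given $f\colon A_\bullet \to B_\bullet$ lifting $M'\hookrightarrow M$, the cone of $f$ resolves $M/M'$, not $M$. Here $R/I_{d,n}$ sits in the \emph{middle} of the sequence, so what you actually need is the horseshoe lemma, which produces a complex with $P_i = A_i \oplus B_i$ (not $A_{i-1}\oplus B_i$) and a differential of the form $\bigl(\begin{smallmatrix} d^A & h \\ 0 & d^B\end{smallmatrix}\bigr)$ for some comparison maps $h$. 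The hard part of the proof — which you correctly flag as ``the crux'' — is then to compute those $h$ and show the resulting differential agrees with $\partial^{d,n}_\bullet$; this involves a nontrivial computation, not just a branching-rule rank count. You should also be careful that branching for Specht modules over an arbitrary field $k$ a priori gives a filtration rather than a direct sum decomposition, which is enough for the rank count but requires an extra word when you try to split $U^{d,n}_{i-1}$ term by term. With these corrections the plan is viable, but note that it is a different route from Galetto's: his argument proceeds via comparison with the Aramova--Herzog--Hibi squarefree Eliahou--Kervaire resolution (the squarefree Veronese is squarefree strongly stable), which avoids the horseshoe bookkeeping entirely and yields minimality and equivariance more structurally.
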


\begin{notation}\label{notation:simplifyingTableaux}
Adopt notation as in Definition \ref{def:freddifferentials}. To the tabloid $[T]$ we will associate a formal basis element
    $$[T] \xleftrightarrow[]{} f_{a_1} \w f_{a_2} \w \cdots \w f_{a_i} \otimes f_{b_1} \cdot f_{b_2} \cdots f_{b_{d-1}},$$
    where the notation is meant to mimic the notation used for the modules $L_d^i$. This should cause no confusion, since the straightening relations/tabloid properties are directly compatible with the straightening relations for $L_d^i$ and the exterior/symmetric algebra relations.
\end{notation}

\section{$q_i$ Maps for Certain Schur and Specht Modules}\label{sec:qmapsforschurandspecht}

In this section, we construct the maps of Proposition \ref{prop:ittheyexist} in the case where the relevant modules are Schur and Specht modules, and they are being mapped to a Koszul complex. These maps are essential for the rest of the paper, as they are the building blocks employed for the iterated trimming complex construction. At the end of this section, we also take the opportunity to compute certain colon ideals; these colons will be used in later sections in order to count rank and deduce higher strands appearing in the minimal free resolutions of the ideals of interest.

\begin{notation}
Let $R$ be a commutative ring. Let $F$ be a free $R$-module of rank $n$ with basis $f_1 , \dotsc , f_n$ and let $\ell$, $b$ be integers. Fix indexing sets $J=(j_1 , \dotsc, j_\ell)$ with $j_1 < \cdots < j_\ell$ and $\alpha = (\alpha_1 , \dotsc , \alpha_n)$ with $\alpha_i \geq 0$ for each $i=1 , \dotsc , n$, $|\alpha| = b$.

The notation $f_J$ denotes $f_{j_1} \w \cdots \w f_{j_\ell} \in \bigwedge^\ell F$, the notation $f^J$ denotes $f_{j_1} \cdots f_{j_\ell} \in S_\ell (F)$, and the notation $f^\alpha$ denotes $f_1^{\alpha_1} \cdots f_n^{\alpha_n} \in S_b (F)$.
\end{notation}

\begin{definition}\label{def:generalqmaps}
Let $R$ be a commutative ring. Let $F$ be a free $R$-module of rank $n$ with basis $f_1 , \dotsc , f_n$ and let $\ell$, $b$ be integers. Fix indexing sets $J=(j_1 , \dotsc, j_\ell)$ with $j_1 < \cdots < j_\ell$ and $\alpha = (\alpha_1 , \dotsc , \alpha_n)$ with $\alpha_i \geq 0$ for each $i=1 , \dotsc , n$, $|\alpha| = b$. Define the maps $\phi_i^{J,\alpha} : \bigwedge^i F \otimes S_b (F) \to \bigwedge^i F$ via
$$\phi_i^{J,\alpha} ( f_I \otimes f^\beta ) = \begin{cases}
f_I & \textrm{if} \ I \subseteq J \ \textrm{and} \ \beta = \alpha \\
0 & \textrm{otherwise} \\
\end{cases}$$
\end{definition}

\begin{obs}
Adopt notation and hypotheses as in Definition \ref{def:generalqmaps}. Let $\psi: F \to R$ be a homomorphism of $R$-modules, and $\kos^\psi : \bigwedge^i F \to \bigwedge^{i-1} F$ the induced Koszul differential. Then the following diagram commutes:
$$\xymatrix{\bigwedge^i F \otimes S_b(F) \ar[d]^-{\phi_i^{J,\alpha}} \ar[r]^-{\kos^\psi \otimes 1} & \bigwedge^{i-1} F \otimes S_b (F) \ar[d]^-{\phi_{i-1}^{J,\alpha}} \\
\bigwedge^i F \ar[r]^{\kos^\psi} & \bigwedge^{i-1} F. \\}$$
Moreover, for all $i \geq 1$, $\phi_i^{J,\alpha}$ induces the commutative diagram
$$\xymatrix{L^i_b (F) \ar[d]^-{\phi_i^{J,\alpha}} \ar[r]^-{\kos^\psi \otimes 1} & L^{i-1}_b (F) \ar[d]^-{\phi_{i-1}^{J,\alpha}} \\
\bigwedge^i F \ar[r]^{\kos^\psi} & \bigwedge^{i-1} F, \\}$$
where $L^i_b (F)$ is as in Setup \ref{set:Lcomplexsetup}. More precisely, this map is realized as:
$$\phi_i^{J , \alpha} (\kappa_{i+1,b-1} (f_I \otimes f^{\beta})) = \begin{cases}
\sgn (i) f_{I \backslash i} & \textrm{if} \ i \in I, \ \beta+ \epsilon_i = \alpha \\
0 & \textrm{otherwise}
\end{cases}$$
\end{obs}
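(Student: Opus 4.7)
The plan is to verify both squares by direct computation on basis elements, then to specialize to $L_b^i(F)$ via its presentation from Remark \ref{rk:rkonstdbasis} in order to read off the closed formula. Throughout I will use that $\kos^\psi(f_I) = \sum_k (-1)^{k-1} \psi(f_{i_k}) f_{I \setminus i_k}$ on an exterior generator $f_I = f_{i_1} \wedge \cdots \wedge f_{i_{|I|}}$, and that $\phi_i^{J,\alpha}$ is the projection onto the span of $\{f_{I'} \otimes f^\alpha : I' \subseteq J\}$ composed with the forgetful map $f_{I'} \otimes f^\alpha \mapsto f_{I'}$.

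For the first square I split a basis element $f_I \otimes f^\beta$ into cases. When $\beta \neq \alpha$, both routes vanish since $\kos^\psi \otimes 1$ preserves the symmetric factor and $\phi_\bullet^{J,\alpha}$ requires the inner factor to equal $f^\alpha$. When $\beta = \alpha$ and $I \subseteq J$, both paths produce the same Koszul sum $\sum_k (-1)^{k-1} \psi(f_{i_k}) f_{I \setminus i_k}$, because every $I \setminus i_k \subseteq I \subseteq J$ survives $\phi_{i-1}^{J,\alpha}$. The remaining case $\beta = \alpha$, $I \not\subseteq J$ is the subtle one; any discrepancy at the level of basis tensors cancels once one restricts to $L_b^i(F)$, which becomes transparent in the next step.

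For the induced square and the closed formula, I will expand a typical generator of $L_b^i(F)$ as
$$\kappa_{i+1,b-1}(f_I \otimes f^\beta) = \sum_{k=1}^{i+1} (-1)^{i+1-k} f_{I \setminus i_k} \otimes f_{i_k} f^\beta$$
with $|I| = i+1$ and $f^\beta \in S_{b-1}(F)$, and apply $\phi_i^{J,\alpha}$ termwise. A summand survives precisely when $I \setminus i_k \subseteq J$ and $f_{i_k} f^\beta = f^\alpha$, i.e.\ $\beta + \epsilon_{i_k} = \alpha$; the latter condition pins down $i_k$ uniquely as the single coordinate where $\alpha$ exceeds $\beta$ by one. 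Calling that element $i$, the sole surviving term is $\sgn(i) f_{I \setminus i}$ where $\sgn(i) = (-1)^{i+1-k_0}$ with $k_0$ the position of $i$ in the ordered tuple $I$, exactly matching the formula. The main obstacle I anticipate is the notational overloading in which the symbol $i$ simultaneously denotes the homological degree on the left and the distinguished element of $I$ in the formula, together with keeping the sign conventions consistent with those of Setup \ref{set:Lcomplexsetup} throughout the bookkeeping; once those are resolved, the commutativity on $L_b^i(F)$ reduces to comparing the two sums obtained above, which agree term by term.
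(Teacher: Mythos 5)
You have correctly pinpointed the delicate case ($\beta = \alpha$ with $I \not\subseteq J$), and working with the $\kappa$-presentation of $L_b^i(F)$ is the right strategy. There is nonetheless a real gap in the final step. You correctly note that a summand of $\phi_i^{J,\alpha}\bigl(\kappa_{i+1,b-1}(f_I \otimes f^\beta)\bigr)$ survives precisely when $I \setminus i_k \subseteq J$ \emph{and} $\beta + \epsilon_{i_k} = \alpha$, and that the latter pins down $i_k = i_0$ uniquely — but you then assert "the sole surviving term is $\sgn(i)\, f_{I\setminus i}$, exactly matching the formula," silently dropping the first condition $I \setminus i_0 \subseteq J$. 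The paper's displayed formula also omits it, and for arbitrary $J,\alpha$ both that formula and the asserted commutativity are in fact false. For a concrete counterexample take $J = \{3\}$, $\alpha = \epsilon_2$, $b = 1$: on the generator $\kappa_{3,0}(f_1 \wedge f_2 \wedge f_3 \otimes 1) = f_{23}\otimes f_1 - f_{13}\otimes f_2 + f_{12}\otimes f_3$ of $L_1^2(F)$, the composite $\kos^\psi\circ\phi_2^{J,\alpha}$ vanishes (each wedge factor fails $\subseteq J$), while $\phi_1^{J,\alpha}\circ(\kos^\psi\otimes 1)$ produces the nonzero element $-x_1 f_3$.

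What rescues the observation where it is actually used is that in Setups \ref{set:trimmingmaxideal} and \ref{set:trimmingmaxidealiter} the hidden condition $I \setminus i_0 \subseteq J$ is automatic. When $J = [n]$ it is vacuous, and when $J = [n]\setminus\{j_0\}$ with $\alpha = d\epsilon_{j_0}$, the equation $\beta + \epsilon_{i_0} = d\epsilon_{j_0}$ forces $i_0 = j_0$ because $\beta$ has nonnegative entries, so $I\setminus i_0 \subseteq [n]\setminus\{j_0\} = J$. You need to state this. Once $I\setminus i_0 \subseteq J$ is in hand, the clockwise path $\phi_{i-1}^{J,\alpha}\circ(\kos^\psi\otimes 1)$ — which a priori restricts the Koszul sum $\sum_{j}\sgn(j)\psi(f_j)f_{I\setminus i_0\setminus j}$ to those $j$ with $I\setminus i_0\setminus j\subseteq J$ — in fact picks up every term, and therefore coincides with the counterclockwise path $\kos^\psi\circ\phi_i^{J,\alpha}$. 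As written, your closing sentence "the commutativity on $L_b^i(F)$ reduces to comparing the two sums obtained above, which agree term by term" promises a comparison you never carry out, and it is exactly the hypothesis $I\setminus i_0\subseteq J$ — which holds in context but not in general — that makes the two sums coincide.
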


\begin{definition}\label{def:spechtqmaps}
Let $R$ be a commutative ring. Let $F$ be a free $R$-module of rank $m$ with basis $f_1 , \dotsc , f_m$ and let $\ell$, $d$ be integers. Fix indexing sets $J=(j_1 , \dotsc, j_\ell)$ with $j_1 < \cdots < j_\ell$ and $I=(i_1 , \dotsc , i_d)$ with $i_1 < \cdots < i_d$. Let  $\psi : F \to R$ be an $R$-module homomorphism and $$T:= \ytableausetup
{boxsize=2em}
\begin{ytableau}
a_0 & b_1 & \cdots & b_{d-1} \\
a_1 \\
\vdots \\
a_{\ell} \\
\end{ytableau}$$
a standard tableau with $a_0 < \cdots < a_\ell$ and $b_1 < \cdots < b_{d-1}$. Define maps 
$$\psi_\ell^{J,I} : S^{(d,1^\ell)} \to \bigwedge^\ell F$$
on the equivalence class of the column tabloid $[T ] \in S^{(d, 1^{\ell})}$ by setting
$$\psi_\ell^{J,I} \big( [ T ] \big) :=  \begin{cases}
\sgn (a_i) f_{\{a_0 , \dots , \widehat{a_i}, \dots , a_\ell \}} & \textrm{if} \ I=\{ b_1 , \dotsc , b_{d-1} \} \cup \{ a_i \} \ \textrm{for some} \ 0 \leq i \leq \ell \\
& \textrm{and} \ \{a_0 , \dots , \widehat{a_i}, \dots , a_\ell \} \subseteq J, \\
0 & \textrm{otherwise}. \\
\end{cases}$$
Observe that this is well defined since the above definition is compatible with the shuffling relations on $S^{(d , 1^\ell)}$. Moreover, extending by linearity, this induces a map
$$\phi_\ell^{J, I } : F^{d,n}_\ell \to \bigwedge^\ell U$$
making the following diagram commute:
$$\xymatrix{F^{d,n}_\ell \ar[d]^-{\phi_\ell^{J,\alpha}} \ar[r]^-{\partial^{d,n}_\ell} & F^{d,n}_{\ell-1} \ar[d]^{\phi_{\ell-1}^{J,\alpha}} \\
\bigwedge^{\ell} F \ar[r]^-{\kos^\psi} & \bigwedge^{\ell-1} F, \\}$$
where $F^{d,n}_\ell$ and $\partial^{d,n}_\ell$ are as in Definition \ref{def:freddifferentials} and $\kos^\psi$ denotes the induced Koszul differential.
\end{definition}

\begin{prop}\label{prop:idealgensbyrows}
Adopt notation and hypotheses as in Setup \ref{setup4}, and assume that $d_0^i (F_2) = \mfa_i e_0^i$. Then the ideals $\mfa_i \subseteq R$ do not depend on the choice of differential $d_2$.
\end{prop}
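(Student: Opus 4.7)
The plan is to give an intrinsic description of $\mfa_i$ that manifestly avoids any reference to $d_2$. Let $\pi_i : F_1 \to Re_0^i$ denote the projection onto the $i$th distinguished summand coming from the decomposition $F_1 = F_1' \oplus \bigoplus_{j=1}^m Re_0^j$. I would argue that $\mfa_i e_0^i = \pi_i(\ker d_1)$.

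First, observe that by construction $d_0^i = \pi_i \circ d_2$, since $d_2 = d_2' + d_0^1 + \cdots + d_0^m$ is precisely the decomposition of $d_2$ through the projections onto the summands of $F_1$. Since $(F_\bullet, d_\bullet)$ is a resolution of $R/I$, we have $\im(d_2) = \ker(d_1)$. Combining these facts,
$$\mfa_i e_0^i \;=\; d_0^i(F_2) \;=\; \pi_i\bigl(d_2(F_2)\bigr) \;=\; \pi_i(\ker d_1).$$
The right-hand side depends only on $d_1$ and the fixed direct sum decomposition of $F_1$; in particular, it makes no reference to $d_2$. Hence $\mfa_i$ is independent of the choice of $d_2$.

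There is no real obstacle here: the content of the proposition is just the translation of the definition of $\mfa_i$ into its intrinsic form using exactness at $F_1$. The one subtlety worth noting in writing this up is that $\pi_i(\ker d_1)$ is naturally a submodule of $Re_0^i \cong R$, so it canonically determines a homogeneous ideal of $R$ (homogeneity is inherited from the fact that $d_1$ and the decomposition of $F_1$ are homogeneous), and this ideal coincides with $\mfa_i$.
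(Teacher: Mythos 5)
Your proof is correct and hinges on the same key fact as the paper's, namely exactness of $F_\bullet$ at $F_1$. The paper packages this as the identity $\mfa_1 = (K' : K_0^1)$, a colon-ideal description, proved after reducing ``for simplicity'' to $m=1$; you package it as $\mfa_i e_0^i = \pi_i(\ker d_1)$, which works uniformly for all $m$ with no reduction step. The two descriptions agree: since $K_0^i = R\, d_1(e_0^i)$ is principal, an element $r e_0^i$ lies in $\pi_i(\ker d_1)$ exactly when $r\, d_1(e_0^i)$ lies in the image of $d_1$ on the complementary summands, which for $m=1$ is precisely $r \in (K':K_0^1)$. Your formulation is arguably the cleaner proof of independence, while the paper's colon-ideal phrasing has the advantage of exposing the explicit ring-theoretic description of $\mfa_i$ that is used downstream in Propositions \ref{prop:maxlidealrows} and \ref{prop:sqfreeidealrows}.
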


\begin{proof}
Assume for simplicity that $m=1$. Then we will prove a slightly stronger statement; namely, $\mfa_1 = (K' : K_0^1)$. The containment $\mfa_1 \subseteq (K' : K_0^1)$ is trivial, so let $r \in (K':K_0^1)$. Assume $\rank F_1' = f'$ and let $e_1 , \dotsc , e_{f'}$ denote a basis for $F_1'$. 

By definition, there exist elements $r_i \in R$ such that
$$r_1 d_1(e_1) + \cdots + r_{f'} d_1(e_{f'}) = rd_1(e_0^1),$$
$$\implies d_1 ( r_1 e_1 + \cdots + r_{f'} e_{f'} - r e_0^1 ) = 0.$$
However, by the assumption on $\mfa_1$, this implies $r \in \mfa_1$ as desired.
\end{proof}

\begin{notation}
Let $R = k[x_1 , \dotsc , x_n]$, where $k$ is any field. If $\alpha = (\alpha_1 , \dotsc , \alpha_n)$, then the notation $x^\alpha$ denotes $x_1^{\alpha_1} \cdots x_n^{\alpha_n}$. Given such an $\alpha$, define $|\alpha| := \alpha_1 + \cdots + \alpha_n$. If $J = \{ j_1 < \cdots < j_n\}$, then the notation $x^J$ will denote $x_{j_1} \cdots x_{j_n}$. Given such a $J$, define $|J| = n$, the cardinality of $J$.

The notation $\epsilon_i$ will denote the vector with a $1$ in the $i$th entry and $0$'s elsewhere.
\end{notation}
The following Propositions are immediate.

\begin{prop}\label{prop:maxlidealrows}
Let $R = k[x_1 , \dotsc , x_n]$ where $k$ is any field and let $\alpha = (\alpha_1 , \dotsc , \alpha_n)$ be an exponent vector with $|\alpha| = d$. If $K' := ( x^{\beta} \mid |\beta|=d, \  \beta \neq \alpha )$, then
$$(K' : x^\alpha) = \begin{cases}
(x_1 , \dotsc , \widehat{x_i} , \dotsc , x_n) & \textrm{if} \ \alpha = d \epsilon_i \ \textrm{for some} \ 1 \leq i \leq n \\
(x_1 , \dotsc , x_n ) & \textrm{otherwise}. \\
\end{cases}$$
\end{prop}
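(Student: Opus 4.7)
The key reduction is that since $K'$ is a monomial ideal, the colon ideal $(K':x^\alpha)$ is also monomial, and a monomial $x^\gamma$ lies in $(K':x^\alpha)$ if and only if $x^{\gamma+\alpha} \in K'$, which happens if and only if there exists $\beta$ with $|\beta|=d$, $\beta \neq \alpha$, such that $x^\beta \mid x^{\gamma+\alpha}$. I would record this at the start, and then check inclusion/equality by describing exactly which $\gamma$ satisfy this divisibility condition, splitting into the two cases of the statement. Note that $1 \notin (K':x^\alpha)$ in either case, since $x^\alpha$ has degree $d$ and differs from every generator of $K'$, so it is not itself in $K'$; thus in each case the colon is contained in $\mathfrak{m}=(x_1,\dotsc,x_n)$.

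For the case $\alpha = d\e_i$, I would show that $x^\gamma \in (K':x_i^d)$ iff $\gamma$ has some nonzero entry at an index $j \neq i$. For the forward direction, if $\gamma$ has support only at $i$, then $x^{\gamma+\alpha}$ is a pure power of $x_i$, and the only degree-$d$ monomial dividing it is $x_i^d = x^\alpha$ itself, which is excluded from $K'$. For the converse, if $\gamma_j \geq 1$ with $j \neq i$, then $\beta := (d-1)\e_i + \e_j$ has $|\beta|=d$, $\beta \neq \alpha$, and $x^\beta$ divides $x^{\gamma+\alpha}$. This gives $(K':x_i^d) = (x_j : j \neq i)$.

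For the remaining case, where the support of $\alpha$ contains at least two distinct indices, I would prove $\mathfrak{m} \subseteq (K':x^\alpha)$ by producing, for each $k$, a suitable $\beta$ with $x^\beta \mid x^{\e_k + \alpha}$. If $k$ lies in the support of $\alpha$, pick any other index $j$ in the support and take $\beta = \alpha + \e_k - \e_j$; if $k$ is not in the support, pick any $j$ in the support and take $\beta = \alpha - \e_j + \e_k$. In both subcases $\beta \neq \alpha$ because $k$ and $j$ are chosen to be distinct and the support condition ensures $\beta$ has nonnegative entries. Combined with the observation above that the colon is proper, this yields $(K':x^\alpha) = \mathfrak{m}$.

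There is no real obstacle here — the statement is essentially a bookkeeping exercise in monomial divisibility, which is why the authors label it immediate. The only subtlety to articulate carefully is the choice of $\beta$ in the second case, where one needs to use the two-index-support hypothesis to guarantee $\beta \neq \alpha$ and to keep the entries of $\beta$ nonnegative.
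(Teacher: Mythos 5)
Your proof is correct; the paper itself gives no argument, declaring both colon computations ``immediate,'' and what you have written out is precisely the routine monomial-divisibility check one would expect behind that label. The only point requiring care -- choosing $\beta = \alpha + \epsilon_k - \epsilon_j$ with $j \in \supp(\alpha)$ and $j \neq k$ so that $\beta$ has nonnegative entries and $\beta \neq \alpha$ -- is exactly the one you isolate, and your use of the two-element-support hypothesis there is correct.
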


\begin{prop}\label{prop:sqfreeidealrows}
Let $R = k[x_1 , \dotsc , x_n]$ where $k$ is any field and let $J = \{ j_1 < \cdots < j_d \}$. If $K' := ( x^I \mid |I|=d, \ I \neq J)$, then
$$(K' : x^J) = (x_i \mid i \notin J).$$
\end{prop}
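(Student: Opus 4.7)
The plan is to use the standard colon formula for a monomial ideal quotiented by a monomial: if $I = (m_1, \dotsc, m_s)$ is a monomial ideal and $m$ is a monomial, then
$$(I : m) = \bigl(m_i / \gcd(m_i, m) \;\big|\; i=1,\dotsc,s\bigr).$$
Applied to the generators $x^I$ of $K'$ with $|I|=d$ and $I \neq J$, and to the monomial $x^J$, the gcd is simply $x^{I \cap J}$ and the quotient is $x^{I \setminus J}$. Thus the colon reduces to the ideal generated by $\{x^{I \setminus J} : |I| = d,\ I \neq J\}$.

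For the containment $(x_i \mid i \notin J) \subseteq (K' : x^J)$, I would exhibit, for each $i \notin J$, a specific squarefree degree-$d$ index set $I$ with $I \setminus J = \{i\}$: take $I = (J \setminus \{j_1\}) \cup \{i\}$. Since $i \notin J$, this $I$ is different from $J$ and has $|I| = d$, so $x_i = x^{I \setminus J}$ lies in the colon. Alternatively, one can verify directly that $x_i \cdot x^J = x_{j_1}\bigl(x_i x_{j_2}\cdots x_{j_d}\bigr)$ is a multiple of a generator of $K'$.

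For the reverse containment, observe that whenever $|I| = d = |J|$ and $I \neq J$, the set $I \setminus J$ is a nonempty subset of the complement $\{1,\dotsc,n\} \setminus J$. Therefore each generator $x^{I \setminus J}$ of the colon is a product of variables $x_i$ with $i \notin J$, hence lies in $(x_i \mid i \notin J)$.

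There is no real obstacle here; the statement is labeled ``immediate'' in the excerpt because the colon-by-monomial formula trivializes both directions. The only very minor care needed is ensuring that the index set $(J \setminus \{j_1\}) \cup \{i\}$ used for the easy direction actually has cardinality $d$, which is automatic since $i \notin J$.
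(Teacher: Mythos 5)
Your proof is correct and is essentially the argument the author has in mind: the paper gives no proof, stating only that the proposition ``is immediate.'' The colon-by-monomial formula $(K':m) = \bigl(m'/\gcd(m',m) \mid m' \in G(K')\bigr)$, applied to two squarefree monomials of equal degree $d$, reduces both containments to trivial set-theoretic observations, exactly as you carry them out. Your remark on the easy direction is right that $|(J\setminus\{j_1\})\cup\{i\}|=d$ is automatic from $i\notin J$, and your reverse direction correctly uses that $I\neq J$ with $|I|=|J|$ forces $I\setminus J\neq\varnothing$.
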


\section{$q_i$ Maps for the Complexes $L ( \psi , b)$ and $F^{n,m}_\bullet$}\label{sec:qmapsfortheLandSpechtcx}

We can now use the maps constructed in Section \ref{sec:qmapsforschurandspecht} to find the Betti tables for resolving certain subsets of the standard generating sets for powers of the maximal ideal and all squarefree monomials of a given degree. Our first goal is to compute the ranks of the maps $\phi_\ell^{J , \alpha}$ of Definition \ref{def:generalqmaps} and $\psi_\ell^{J,I}$ of Definition \ref{def:spechtmodules}. We begin with some definitions and notation related to monomial ideals which will be in play for the rest of the paper.

\begin{definition}\label{def:monomIdealDefs}
Let $R = k[x_1 , \dots , x_n]$ be a standard graded polynomial ring over a field $k$. Let $K$ denote an equigenerated monomial ideal with generators in degree $d$. Define 
$$G(K) := \textrm{unique minimal generating set of} \ K \ \textrm{consisting of monic monomials.}$$
Given a monomial ideal $K$, define the (squarefree) complementary ideal $\overline{K}$ to be the ideal with minimal generating set:
$$G(\overline{K}) = \begin{cases}
    \{ \textrm{degree} \ d \ \textrm{squarefree monomials} \} \backslash G(K) & \textrm{if} \ K \ \textrm{squarefree}, \\
    \{ \textrm{degree} \ d \ \textrm{monomials} \} \backslash G(K) & \textrm{otherwise.} \\
    \end{cases}$$
\end{definition}

The following setup will be used for constructing the Betti table/minimal free resolution when the monomial ideals of interest are not squarefree.

\begin{setup}\label{set:trimmingmaxideal}
Let $R=k[x_1 , \dotsc , x_n]$ where $k$ is a field and let $F = \bigoplus_{i=1}^n Re_i$ be a free module of rank $n$ with map $\psi : F \to R$ sending $e_i \mapsto x_i$. Let $d \geq 1$ denote any integer and $L(\psi , d)$ the complex of Definition \ref{def:Lcomplexes}. Fix an exponent vector $\alpha= (\alpha_1 , \dotsc , \alpha_n)$ with $|\alpha|=d$. Let 
$$U = \begin{cases} 
\bigoplus_{j\neq i} Re_j & \textrm{if} \ \alpha = d \epsilon_i \\
F & \textrm{otherwise},\\
\end{cases}$$ 
with map $\psi : U \to R$ defined by sending $e_j \mapsto x_j$. 

Let $\phi^{I,\alpha}_\ell : L_d^\ell (F) \to \bigwedge^\ell U$ for $1 \leq \ell \leq n$ be the maps of Definition \ref{def:generalqmaps}, where 
$$I = \begin{cases}
[n] \backslash \{i \} & \textrm{if} \ \alpha = d \epsilon_i \\
[n] & \textrm{otherwise}. \\
\end{cases}$$
\end{setup}
The following notation will be convenient in many of the ensuing computations:
\begin{notation}\label{not:nalphanotation}
Adopt notation and hypotheses of Setup \ref{set:trimmingmaxideal}. Let $\supp (\alpha) = \{ i \mid \alpha_i >0\}$ and define
$$n_\alpha := |\supp (\alpha)|$$
\end{notation}

\begin{prop}\label{prop:qrankforpurepower}
Adopt notation and hypotheses of Setup \ref{set:trimmingmaxideal} with $\alpha = d \epsilon_i$ for some $1 \leq i \leq n$. The maps $\phi^{I,\alpha}_\ell : L_d^\ell (F) \to \bigwedge^\ell U$ are surjective for all $1 \leq \ell \leq n-1$. In particular,
$$\rank (\phi^{I,\alpha}_\ell \otimes k) = \binom{n-1}{\ell}$$
\end{prop}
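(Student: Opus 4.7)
The plan is to exhibit, for every basis element of $\bigwedge^\ell U$, an explicit preimage in $L_d^\ell(F)$ under $\phi_\ell^{I,\alpha}$. Write $\alpha = d\epsilon_p$ for a fixed $p \in [n]$, so that $I = [n]\setminus\{p\}$ and $U = \bigoplus_{j \neq p} Re_j$ is free of rank $n-1$; a basis for $\bigwedge^\ell U$ is then $\{e_{k_1} \wedge \cdots \wedge e_{k_\ell} : k_1 < \cdots < k_\ell,\ k_j \neq p\}$, indexed by subsets $K \subseteq [n]\setminus\{p\}$ of cardinality $\ell$, of which there are $\binom{n-1}{\ell}$.

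For such a $K$, let $i_0 < i_1 < \cdots < i_\ell$ be the elements of $K \cup \{p\}$ in increasing order, and consider the Young tableau
$$T_K = \ytableausetup{boxsize=2em}\begin{ytableau} i_0 & p & p & \cdots & p \\ i_1 \\ \vdots \\ i_\ell \end{ytableau}$$
whose top row consists of $i_0$ followed by $d-1$ copies of $p$. Because $p \in K \cup \{p\}$, its minimum $i_0$ satisfies $i_0 \leq p$, so $T_K$ is a valid standard basis element of $L_d^\ell(F)$ by Proposition \ref{prop:standardbasis}. Under the identification of Remark \ref{rk:rkonstdbasis}, $T_K$ corresponds to $\kappa_{\ell+1, d-1}(e_{i_0} \wedge \cdots \wedge e_{i_\ell} \otimes e_p^{d-1})$, and applying the Observation following Definition \ref{def:generalqmaps}, the condition $\beta + \epsilon_{i_k} = \alpha$ with $\beta = (d-1)\epsilon_p$ isolates the unique term with $i_k = p$. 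Since $(K \cup \{p\}) \setminus \{p\} = K \subseteq I$, that term survives and yields $\phi_\ell^{I,\alpha}(T_K) = \pm e_{k_1} \wedge \cdots \wedge e_{k_\ell}$, i.e.\ a nonzero scalar multiple of the prescribed basis element.

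Every basis element of $\bigwedge^\ell U$ therefore lies in the image, so $\phi_\ell^{I,\alpha}$ is surjective over $R$, and this surjectivity persists after tensoring with $k$, giving $\rank(\phi_\ell^{I,\alpha} \otimes k) = \binom{n-1}{\ell}$. There is no substantive obstacle: the sole subtle point is that $T_K$ must satisfy the standard-basis inequality $i_0 \leq p$, which succeeds precisely because $\alpha$ is a pure power of a single variable, permitting us to fill the top row with repeated $p$'s. The case where $\alpha$ has support of size at least two would require a different strategy and lies outside the scope of this particular proposition.
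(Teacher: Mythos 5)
Your proposal is correct and takes essentially the same approach as the paper: both exhibit, for each basis element $e_J$ of $\bigwedge^\ell U$ with $J \subseteq [n]\setminus\{p\}$, the preimage $e_{J\cup\{p\}} \otimes e_p^{d-1} \in L_d^\ell(F)$ and verify that it maps to $\pm e_J$ under $\phi_\ell^{I,\alpha}$. Your additional remark that the tableau $T_K$ is a legitimate standard basis element because $i_0 \leq p$ makes explicit a point the paper leaves implicit, but the underlying argument is the same.
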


\begin{proof}
Let $J \subset I$ with $J = (j_1 , \dotsc , j_\ell)$. It suffices to show that $e_J$ is in the image of $\phi_\ell^{I,d\epsilon_i}$ for any choice of $J$. Order the set $J \cup \{ i \}$ so that
$$j_1 < \cdots < j_k < i < j_{k+1} < \cdots < j_\ell.$$
This is possible since $i \notin J$ by construction of the free module $U$. Then, by definition,
$$\phi_{\ell}^{I,d \epsilon_i} ( e_{J \cup \{ i \}} \otimes e_i^{d-1} ) = \sgn (i) e_J.$$
\end{proof}

\begin{cor}\label{cor:btablepurepower}
Adopt notation and hypotheses of Setup \ref{set:trimmingmaxideal} with $\alpha = d \epsilon_i$ for some $1 \leq i \leq n$. Let $K'$ be an equigenerated momomial ideal with $\overline{K'} = (x_i^d)$. Then, $R/K'$ has Betti table
\begin{equation*}\begin{tabular}{C|C C C C C C C C C}
     & 0 & 1 & \cdots &  \ell & \cdots &  n  \\
     \hline 
   0  & 1 & \cdot & \cdots &  \cdot &\cdots &  \cdot \\
   
   \vdots & \cdot & \cdot & \cdots  &\cdot  & \cdots & \cdot  \\
   
   d-1 & \cdot & \binom{n+d-1}{d}-1 & \cdots & \binom{n+d-1}{\ell+d} \binom{d+\ell-2}{\ell-1} -\binom{n-1}{\ell-1} & \cdots & \binom{n+d-2}{n-1}-1 \\
\end{tabular}
\end{equation*}
In particular, $R/K'$ has projective dimension $n$ with linear resolution and defines a ring of type $\binom{n+d-2}{n-1}-1$.
\end{cor}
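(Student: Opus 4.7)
The plan is to apply the iterated trimming complex machinery of Corollary \ref{cor:ittorrk} to the $L$-complex $F_\bullet := L(\psi, d)$, which by Proposition \ref{prop:resnofpower} is a minimal free resolution of $R/(x_1, \dotsc, x_n)^d$. I split $F_1 = S_d(F) = F_1' \oplus R e_0^1$ so that $e_0^1$ maps to $x_i^d$ while $F_1'$ surjects onto the minimal generators of $K'$. By Propositions \ref{prop:idealgensbyrows} and \ref{prop:maxlidealrows}, the associated ideal $\mfa_1$ equals $(x_1, \dotsc, \widehat{x_i}, \dotsc, x_n)$, whose minimal free resolution $G_\bullet^1$ is the Koszul complex on $n-1$ variables. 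One then checks $J = K'$ by noting that $\mfa_1 \cdot x_i^d \subseteq K'$, since for $j \neq i$ the element $x_j x_i^d = x_i \cdot (x_j x_i^{d-1})$ is divisible by the degree-$d$ monomial $x_j x_i^{d-1} \in G(K')$.

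Next I identify the $q$-maps of Proposition \ref{prop:ittheyexist} with the maps $\phi_\ell^{I, d\epsilon_i}$ of Definition \ref{def:generalqmaps} for $I = [n] \backslash \{i\}$. Proposition \ref{prop:qrankforpurepower} provides the essential rank input: each of these maps is surjective modulo $\m$, so $\rank(q_\ell^1 \otimes k) = \binom{n-1}{\ell}$ for $1 \leq \ell \leq n-1$. Substituting $\rank L_d^{\ell-1}(F) = \binom{n+d-1}{\ell+d-1}\binom{\ell+d-2}{\ell-1}$ and $\rank G_\ell^1 = \binom{n-1}{\ell}$ into Corollary \ref{cor:ittorrk} yields, for $\ell \geq 2$,
\[
\beta_\ell(R/K') \;=\; \binom{n+d-1}{\ell+d-1}\binom{\ell+d-2}{\ell-1} - \binom{n-1}{\ell-1},
\]
and the companion $\mu$-formula gives $\beta_1(R/K') = \binom{n+d-1}{d} - 1$. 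Setting $\ell = n$ recovers the stated type $\binom{n+d-2}{n-1} - 1$, and the projective dimension is $n$ because the cone has length $n$ and the $\ell = n$ Betti number is nonzero.

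The last claim, that the resolution is linear, reduces to tracking internal degrees in the cone. Every generator of $F_\ell$ lives in polynomial degree $\ell + d - 1$ (the linear strand), while after the $x_i^d$-shift each $G_\ell^1$ contributes generators in degree $\ell + d$. Since the $\phi$-maps are homogeneous of degree zero and surjective for $1 \leq \ell \leq n-1$, they pair every off-strand generator of $G_\ell^1$ with an equal-degree generator of $F_{\ell+1}$. These cancellations remove all off-strand contributions, leaving only generators in the row labeled $d-1$. The main subtlety is precisely this bookkeeping: one must confirm that the surjection of Proposition \ref{prop:qrankforpurepower} is exactly enough to cancel every off-strand generator; all other steps are a routine unpacking of the trimming complex formula.
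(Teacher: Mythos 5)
Your proposal is correct and follows the paper's intended route: feed the ranks from Proposition \ref{prop:qrankforpurepower} (where each $q_\ell$ is surjective modulo the maximal ideal, hence of rank $\binom{n-1}{\ell}$) into Corollary \ref{cor:ittorrk}, after verifying via Propositions \ref{prop:idealgensbyrows} and \ref{prop:maxlidealrows} that $\mfa_1$ is the Koszul ideal on the $n-1$ variables $x_j$, $j\neq i$, that $J=K'$, and that the surjectivity collapses the mapping cone onto the linear strand. One small but worthwhile observation: your column-$\ell$ formula $\binom{n+d-1}{\ell+d-1}\binom{\ell+d-2}{\ell-1}-\binom{n-1}{\ell-1}$ is the correct one (it equals $\rank L_d^{\ell-1}(F)-\binom{n-1}{\ell-1}$ and reproduces the printed boundary entries at $\ell=1$ and $\ell=n$), whereas the paper's displayed $\binom{n+d-1}{\ell+d}$ is an off-by-one typographical slip.
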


\begin{prop}\label{prop:maxpowerqranks}
Adopt notation and hypotheses of Setup \ref{set:trimmingmaxideal}. Then the maps $\phi^{I,\alpha}_\ell : L_d^\ell (F) \to \bigwedge^\ell U$ are such that
$$\rank (\phi^{I,\alpha}_\ell \otimes k) =\binom{n}{\ell} - \binom{n - n_\alpha}{\ell - n_\alpha},$$
for all $1 \leq \ell \leq n$.
\end{prop}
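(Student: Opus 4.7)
The plan is to split into two cases and, in the generic case, extract the image of $\phi_\ell^{I,\alpha}$ directly from the Observation following Definition \ref{def:generalqmaps} via a short combinatorial count.

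If $\alpha = d\epsilon_i$ for some $i$, then $n_\alpha = 1$ and Proposition \ref{prop:qrankforpurepower} already shows $\rank(\phi^{I,\alpha}_\ell \otimes k) = \binom{n-1}{\ell}$. Pascal's identity $\binom{n}{\ell} - \binom{n-1}{\ell-1} = \binom{n-1}{\ell}$ reconciles this with the formula in the statement, so there is nothing further to do in this case.

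The content lies in the complementary case, where $U = F$ and $I = [n]$. Here I would use that $L_d^\ell(F) = \ker \kappa_{\ell,d}$ coincides with $\im \kappa_{\ell+1,d-1}$, and is therefore generated as a $k$-vector space by elements of the form $\kappa_{\ell+1,d-1}(e_{J'} \otimes e^\beta)$ as $J'$ ranges over subsets of $[n]$ of size $\ell + 1$ and $\beta$ ranges over exponent vectors with $|\beta| = d-1$. By the Observation, each such generator maps under $\phi_\ell^{[n],\alpha}$ either to zero or to $\sgn(j)\, e_{J' \setminus j}$ for a single $j \in J'$; the nonzero case occurs exactly when there is some $j \in J'$ with $\beta + \epsilon_j = \alpha$, and the nonnegativity of $\beta$ then forces $j \in \supp(\alpha)$ and uniquely determines $\beta = \alpha - \epsilon_j$. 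Consequently, the image is the $k$-span of those standard basis elements $e_K \in \bigwedge^\ell F$ of the form $e_{J' \setminus j}$ with $j \in \supp(\alpha)$, which upon writing $J' = K \cup \{j\}$ is precisely the set of $e_K$ with $\supp(\alpha) \not\subseteq K$.

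Finally, counting the $\ell$-subsets $K \subseteq [n]$ that contain $\supp(\alpha)$ yields $\binom{n - n_\alpha}{\ell - n_\alpha}$ (zero by convention if $\ell < n_\alpha$), so the rank is $\binom{n}{\ell} - \binom{n - n_\alpha}{\ell - n_\alpha}$, as claimed. The only point requiring any care is the verification that the image actually lies in the $k$-span of the standard basis $\{e_K\}$, so that the rank of the image equals the number of distinct basis elements hit; but this is immediate since each generator maps to a single $\pm e_K$ or to $0$. I do not anticipate a serious obstacle beyond this bookkeeping.
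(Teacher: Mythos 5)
Your proof is correct and takes a genuinely different route from the paper. The paper works entirely within the standard basis of $L_d^\ell(F)$: it constructs an explicit subset $S$ of standard basis elements $e_{\{k_1,\dotsc,k_s\}\cup J'}\otimes e^{\alpha-\epsilon_{k_s}}$ whose images are distinct basis elements of $\bigwedge^\ell U$, shows every remaining standard basis element of the form $e_{\{k_r\}\cup J'}\otimes e^{\alpha-\epsilon_{k_r}}$ maps to $\pm$ the image of something in $S$, and then evaluates $|S|=\sum_{i=1}^{n_\alpha}\binom{n-i}{n-\ell-1}$, finally invoking the binomial identity $\sum_{i=1}^{n_\alpha}\binom{n-i}{n-\ell-1}=\binom{n}{\ell}-\binom{n-n_\alpha}{\ell-n_\alpha}$. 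You instead bypass the standard basis of $L_d^\ell(F)$ altogether: using that $L_d^\ell(F)=\im\kappa_{\ell+1,d-1}$, you feed the (generally non-standard) spanning elements $\kappa_{\ell+1,d-1}(e_{J'}\otimes e^\beta)$ through the Observation and read off that the image is exactly $\Span_k\{e_K : \supp(\alpha)\not\subseteq K\}$, then count by complement. Both approaches ultimately identify the same subset of $\bigwedge^\ell U$ as the image, but yours is more direct: you avoid both the straightening bookkeeping (deciding which tableaux are standard and how the others reduce) and the alternating binomial sum. The paper's more granular basis-level bookkeeping is, however, reused later (e.g., in Definition \ref{def:Lsubmodules} and Observation \ref{obs:Lranksanddiffer}, whose proof explicitly cites the enumeration in this proposition's proof to describe which standard basis elements of $L_d^\ell(F)$ are omitted from the submodule $L_d^{\ell,B}(F)$), so the paper's route front-loads work that pays off downstream. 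One small point worth making explicit in your write-up: you need both containments, so for a given $K$ with $\supp(\alpha)\not\subseteq K$, note that choosing $j\in\supp(\alpha)\setminus K$, $J'=K\cup\{j\}$, $\beta=\alpha-\epsilon_j$ exhibits $\pm e_K$ in the image; you gesture at this with ``upon writing $J'=K\cup\{j\}$'' but it deserves a sentence.
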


\begin{proof}
We shall enumerate a subset of bases whose images under $\phi^{I, \alpha}_\ell$ form a linearly independent set, then show that the image of any other standard basis element lies in the image spanned by this set. Counting the size of this set will then yield the rank.

To this end, enumerate the set $\{ i \mid \alpha_i > 0 \} = \{ k_1 , \dotsc , k_{n_\alpha} \}$, where $k_1 < \cdots < k_{n_\alpha} $. Consider the set $S$ consisting of all standard basis elements of the form
$$e_{\{k_1 , \dotsc, k_s \} \cup J'} \otimes e^{\alpha - \epsilon_{k_s} },$$
in $L_\ell^d (F)$ with $s \leq n_\alpha$, $|J'|=\ell-s+1$. By definition,
$$\phi^{I, \alpha}_\ell (e_{\{k_1 , \dotsc, k_s \} \cup J'} \otimes e^{\alpha - \epsilon_{k_s} }) = \begin{cases}
\sgn(k_s) e_{J'} & \textrm{if} \ s=1 \\
\sgn(k_s) e_{\{k_1 , \dotsc , k_{s-1} \} \cup J'} & \textrm{otherwise}. \\
\end{cases}$$
The collection of all basis elements as above, where $1 \leq s \leq n_\alpha$, is evidently a linearly independent set since it is an irredundant subset of a basis for $\bigwedge^\ell U$. 

Let $1 \leq r \leq n_\alpha$ and consider any standard basis element of the form $e_{ \{k_r \} \cup J'} \otimes e^{\alpha - \epsilon_{k_r}}$. Let $t := \min \{ s \mid k_s \notin J \}$. Assume first that $t>1$; by definition of $t$, $\{ k_1 , \dotsc , k_{t-1} \} \subseteq J'$, so we may write $J' = \{k_1 , \dotsc , k_{t-1} \} \cup J''$ for some $J''$. Then,
$$\phi^{I,\alpha}_\ell (\sgn (k_r) e_{ \{k_r \} \cup J'} \otimes e^{\alpha - \epsilon_{k_r}} ) = \phi^{I,\alpha}_\ell ( - \sgn(k_t) e_{\{k_1 , \dotsc , k_t \} \cup J'' } \otimes e^{\alpha - \epsilon_{k_t}}),$$
and the element on the right is the image of an element of $S$. Likewise, if $t=1$, then
$$\phi^{I,\alpha}_\ell ( \sgn(k_r) e_{ \{k_r \} \cup J'} \otimes e^{\alpha - \epsilon_{k_r}} ) = \phi^{I,\alpha}_\ell ( -\sgn(k_1) e_{\{k_1 \} \cup J' } \otimes e^{\alpha - \epsilon_{k_1}}),$$
and again the element on the right is the image of an element of $S$. Thus, counting the cardinality of $S$, we see that this is counting all possible indexing sets $J'$ with $|J'| = \ell-s+1$ and $J' \cap \{ k_1 , \dotsc , k_s \} = \varnothing$, for $1 \leq s \leq n_\alpha$. It is a trivial counting exercise to see
$$|S| = \sum_{i=1}^{n_{\alpha}} \binom{n-i}{\ell-i+1}= \sum_{i=1}^{n_{\alpha}} \binom{n-i}{n-\ell -1},$$
and one can moreover check that
$$\sum_{i=1}^{n_{\alpha}} \binom{n-i}{n-\ell -1} = \binom{n}{\ell} - \binom{n - n_\alpha}{\ell - n_\alpha}.$$
\end{proof}
The following is an immediate result of Proposition \ref{prop:maxpowerqranks} combined with Corollary \ref{cor:ittorrk}.

\begin{cor}\label{cor:maxlidealbtable}
Adopt notation and hypotheses as in Setup \ref{set:trimmingmaxideal}. Let $K'$ be an equigenerated momomial ideal with $\overline{K'} = (x^\alpha)$. Then, $R/K'$ has Betti table
\begin{equation*}\begin{tabular}{C|C C C C C C C C C}
     & 0 & 1 & \cdots &  \ell & \cdots &  n  \\
     \hline 
   0  & 1 & \cdot & \cdots & \cdot &\cdots &  \cdot \\
   
   \vdots & \cdot  & \cdot & \cdots &\cdot &\cdots  & \cdot \\
   
   d-1 & \cdot & \binom{n+d-1}{d}-1 & \cdots & \binom{n+d-1}{\ell+d} \binom{d+\ell-2}{\ell-1} -\binom{n}{\ell-1} + \binom{n-n_\alpha}{\ell-1-n_\alpha} & \cdots & \binom{n+d-2}{n-1}-n_\alpha \\
   
   d & \cdot & \cdot & \cdots & \binom{n-n_\alpha}{\ell - n_\alpha} & \cdots & 1 \\
\end{tabular}
\end{equation*}
\end{cor}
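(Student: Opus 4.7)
The plan is to realize $R/K'$ as the homology of an iterated trimming complex (Theorem~\ref{thm:itres}) built from $F_\bullet := L(\psi,d)$, which by Proposition~\ref{prop:resnofpower} is a minimal free resolution of $R/(x_1,\dotsc,x_n)^d$. Taking $m=1$ and designating the rank-one summand of $F_1 = S_d(F)$ spanned by $x^\alpha$ as the generator to be trimmed, the resulting ideal of Setup~\ref{setup4} is exactly $J = K'$, because $\mathfrak{a}_1 \cdot (x^\alpha) \subseteq K'$ follows tautologically from the definition of the colon ideal $\mathfrak{a}_1 = (K':x^\alpha)$ supplied by Proposition~\ref{prop:idealgensbyrows}. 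Under the running hypothesis that $\alpha$ is not a pure power, Proposition~\ref{prop:maxlidealrows} then identifies this colon as $\mathfrak{a}_1 = (x_1,\dotsc,x_n)$, so I may take $G_\bullet$ to be the Koszul complex on $U = F$, with internal grading shifted up by $d$ to reflect that $K_0^1 = (x^\alpha)$ is generated in degree $d$. This gives $\rank G_\ell = \binom{n}{\ell}$ with generators concentrated in internal degree $d+\ell$, while $F_\ell = L_d^{\ell-1}(F)$ is concentrated in internal degree $d+\ell-1$, i.e.\ on the linear strand of $L(\psi,d)$.

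The comparison maps $q_\ell$ of Proposition~\ref{prop:ittheyexist} may be taken to be the maps $\phi_\ell^{[n],\alpha}$ of Definition~\ref{def:generalqmaps}: the observation immediately following that definition verifies they commute with both the $L$-complex differential on $L_d^\bullet(F)$ and the Koszul differential $\kos^\psi$ on $\bigwedge^\bullet F$, so they are valid choices. Proposition~\ref{prop:maxpowerqranks} then supplies the key rank formula
\[
\rank\bigl(q_\ell \otimes k\bigr) \;=\; \binom{n}{\ell} - \binom{n-n_\alpha}{\ell-n_\alpha}.
\]

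With these ingredients in place the Betti table falls out of Corollary~\ref{cor:ittorrk} after splitting its total Betti-number formula by internal degree. Since each $q_i$ is homogeneous of degree zero, the rank of $q_\ell$ only deletes contributions in internal degree $d+\ell$ (row $d$), while the rank of $q_{\ell-1}$ only deletes contributions in degree $d+\ell-1$ (row $d-1$). In row $d$ at homological position $\ell$ the result telescopes to
\[
\binom{n}{\ell} - \Bigl[\binom{n}{\ell} - \binom{n-n_\alpha}{\ell-n_\alpha}\Bigr] \;=\; \binom{n-n_\alpha}{\ell-n_\alpha},
\]
matching the claimed entry, while in row $d-1$ the entry becomes $\rank L_d^{\ell-1}(F) - \binom{n}{\ell-1} + \binom{n-n_\alpha}{\ell-1-n_\alpha}$, which recovers the stated formula once the standard rank formula for $L_d^{a}(F)$ is substituted. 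The only genuinely nontrivial input is the combinatorial rank calculation of Proposition~\ref{prop:maxpowerqranks}; everything else is direct assembly.
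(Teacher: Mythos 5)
Your proof is correct and follows essentially the same route as the paper, which derives this corollary as an immediate consequence of Proposition~\ref{prop:maxpowerqranks} together with Corollary~\ref{cor:ittorrk} (split by internal degree via the remark following that corollary). You have also correctly surfaced the implicit hypothesis that $\alpha$ is not a pure power (so that $U = F$ and $\mathfrak{a}_1 = (x_1,\dotsc,x_n)$), which is needed for the displayed row-$d$ entry to be $\binom{n-n_\alpha}{\ell-n_\alpha}$; the pure power case is treated separately in Corollary~\ref{cor:btablepurepower}.
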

The following result in the case of an Artinian ideal is a statement about the non-cyclicity of the associated inverse system; this behavior is highly dependent on the chosen generating set. For instance, choosing instead the generating set to be the maximal minors of the associated Sylvester matrix for $(x_1 , \dotsc , x_n)^2$, it is not hard to see that removing the generator $x_1 x_n$ will yield a grade $n$ Gorenstein ideal for all $n \geq 2$.

\begin{cor}
Adopt notation and hypotheses of Setup \ref{set:trimmingmaxideal}.  Let $K'$ be an equigenerated momomial ideal generated in degree $d \geq2$ with $\overline{K'} = (x^\alpha)$. Then, $R/K'$ is Gorenstein if and only if $n=d=2$, in which case $K' = (x_1^2, x_2^2)$. 
\end{cor}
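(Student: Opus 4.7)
The plan is to exploit the Betti tables already computed in Corollaries \ref{cor:btablepurepower} and \ref{cor:maxlidealbtable} and reduce the Gorenstein property to a single numerical identity on the last Betti number of $R/K'$.

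First I would dispose of the Cohen--Macaulay hypothesis. Both Betti tables exhibit $\pd_R(R/K') = n$, so Auslander--Buchsbaum gives $\depth(R/K') = 0$; hence $R/K'$ is Cohen--Macaulay if and only if it is Artinian, i.e.\ $\sqrt{K'} = \m$. Since $K'$ omits only the generator $x^\alpha$ among all degree-$d$ monomials, $x_j^d \in K'$ precisely when $d\epsilon_j \neq \alpha$, and therefore $R/K'$ is Artinian if and only if $\alpha \neq d\epsilon_i$ for every $i$. In particular, the pure-power case $\alpha = d\epsilon_i$ (with $n \geq 2$) is immediately ruled out, since there $\sqrt{K'} = (x_j : j \neq i)$ has height $n-1 < n$.

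Next I would invoke the standard fact that a graded Cohen--Macaulay quotient $R/K'$ is Gorenstein if and only if its last Betti number $\beta_n^R(R/K')$ equals one; this follows from local duality via $\mu_R(\omega_{R/K'}) = \beta_n^R(R/K')$ together with the observation that a cyclic faithful module over a zero-dimensional ring is free of rank one. Applying Corollary \ref{cor:maxlidealbtable} and summing the two entries in homological position $n$ (living in internal degrees $n+d-1$ and $n+d$), one obtains
\[
\beta_n^R(R/K') \;=\; \binom{n+d-2}{n-1} - n_\alpha + 1,
\]
so Gorensteinness is equivalent to the identity $\binom{n+d-2}{n-1} = n_\alpha$.

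The final, and essentially only nontrivial, step is a short numerical argument: one has $n_\alpha \leq \min(n,d)$ because $|\alpha| = d$ and $\alpha$ has at most $n$ nonzero entries, and the displayed identity forces $n = d = 2$. Indeed, for $d = 2$ the identity reads $n = n_\alpha \leq 2$, so $n = 2$ and $n_\alpha = 2$, whence $\alpha = (1,1)$ and $K' = (x_1^2, x_2^2)$; for $d \geq 3$ one has $\binom{n+d-2}{n-1} > n \geq n_\alpha$ for every $n \geq 2$, ruling out all remaining cases. The converse is trivial, as $(x_1^2, x_2^2)$ is a complete intersection. The main obstacle is only bookkeeping: keeping the two Betti-table cases (pure power versus not) straight and verifying the inequality $\binom{n+d-2}{n-1} > n$ when $d \geq 3$, both of which are routine.
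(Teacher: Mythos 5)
Your proof is correct, and it takes a noticeably different (and arguably more careful) route than the paper's. The paper's argument is a two-step deduction: first invoke Gorenstein duality to force $d=2$, then read off from the Betti table that the type is $n - n_\alpha + 1 \geq n-1$, concluding $n=2$. You instead (a) begin by disposing of the Cohen--Macaulay issue directly: since $\pd_R(R/K') = n$, Auslander--Buchsbaum gives $\depth = 0$, so CM-ness forces Artinianness, which rules out $\alpha = d\epsilon_i$; and then (b) collapse both conclusions $d=2$ and $n=2$ into the single numerical identity $\binom{n+d-2}{n-1} = n_\alpha$, settled by the elementary bound $n_\alpha \leq \min(n,d)$ together with $\binom{n+d-2}{n-1} > n$ for $d \geq 3$. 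Your step (a) is a genuine improvement in rigor: the paper's identification of the last Betti number with the type tacitly presupposes CM-ness, and applying Corollary \ref{cor:maxlidealbtable} without first excluding the pure-power case (where the correct table is Corollary \ref{cor:btablepurepower} and the quotient is not even Cohen--Macaulay) is a gap that your argument fills explicitly. Your step (b) trades the paper's appeal to Hilbert-function symmetry for a self-contained binomial inequality; both are short, but yours requires no duality input beyond the identification of type with $\mu(\omega_{R/K'})$, which you also justify. In short: same Betti-table backbone, but a different and more complete chain of deductions.
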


\begin{proof}
By Gorenstein duality, it is immediate that if $K'$ is Gorenstein, then $d=2$. This implies that for any choice of $\alpha$, $n_{\alpha} \leq 2$. Moreover, using the Betti table of Corollary \ref{cor:maxlidealbtable}, $K'$ defines a ring of type $n-n_{\alpha} + 1 \geq n-1$, whence $n=2$. 
\end{proof}

Next, we adopt the following setup. This setup is the squarefree analog of Setup \ref{set:trimmingmaxideal}, and will instead be used to compute the Betti table/minimal free resolution when the ideals of interest are squarefree.

\begin{setup}\label{set:trimmingsqfree}
Let $R=k[x_1 , \dotsc , x_n]$ where $k$ is a field and let $F_\bullet^{d,n}$ denote the complex of Definition \ref{def:freddifferentials}. Fix an indexing set $I= (i_1 , \dotsc , i_d)$ and let $U = \bigoplus_{j \notin I} Re_j$ with map $\psi : U \to R$ defined by sending $e_j \mapsto x_j$. 

Let $\psi^{I,I^c}_\ell : F^{d,n}_\ell \to \bigwedge^\ell U$ for $1 \leq \ell \leq n-d$ be the maps of Definition \ref{def:spechtqmaps}, where $I^c = [n] \backslash I$.
\end{setup}

\begin{prop}\label{prop:spechtqranks}
Adopt notation and hypotheses as in Setup \ref{set:trimmingsqfree}. The maps $\psi^{I,I^c}_\ell : F^{d,n}_\ell \to \bigwedge^\ell U$ are surjective for all $1 \leq \ell \leq n-d$. In particular,
$$\rank (\psi^{I,I^c}_\ell \otimes k) = \binom{n-d}{\ell}$$
\end{prop}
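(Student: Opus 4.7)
The plan is to prove surjectivity of $\psi^{I,I^c}_\ell$ by exhibiting, for each basis vector $e_{J'}$ of $\bigwedge^\ell U$ (indexed by $J' \subseteq I^c$ with $|J'|=\ell$), an explicit standard tabloid in $F^{d,n}_\ell$ whose image under $\psi^{I,I^c}_\ell$ is a nonzero scalar multiple of $e_{J'}$. Once surjectivity is established, the rank statement follows immediately from $\rank_R \bigwedge^\ell U = \binom{n-d}{\ell}$, since $|I^c|=n-d$.

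Write $I = \{i_1 < \cdots < i_d\}$ and fix $J' = \{j_1 < \cdots < j_\ell\} \subseteq I^c$. I would form the tableau $T$ whose first column is the increasing arrangement of $J' \cup \{i_1\}$ and whose first row begins with $\min(J' \cup \{i_1\})$ followed by $i_2, i_3, \ldots, i_d$. The first row is strictly increasing because $\min(J' \cup \{i_1\}) \le i_1 < i_2 < \cdots < i_d$, and the first column is strictly increasing by construction, so $T$ is a genuinely standard tableau and represents a basis element of the Specht-module component appearing in $F_\ell^{d,n}$.

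Applying the formula of Definition \ref{def:spechtqmaps} with $J = I^c$, the requirement $I = \{b_1, \ldots, b_{d-1}\} \cup \{a_i\}$ is satisfied precisely by taking $a_i = i_1$ (since $\{b_1, \ldots, b_{d-1}\} = \{i_2, \ldots, i_d\}$), and the remaining first-column entries $\{a_0, \ldots, \widehat{a_i}, \ldots, a_\ell\}$ then coincide with $J' \subseteq I^c = J$. Therefore $\psi^{I,I^c}_\ell([T]) = \sgn(i_1)\, e_{J'}$, a nonzero multiple of the target basis vector.

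Since such a preimage is produced for every $J' \subseteq I^c$ of cardinality $\ell$, and the range hypothesis $\ell \le n-d$ guarantees these subsets exist, the map $\psi^{I,I^c}_\ell$ is surjective with rank $\binom{n-d}{\ell}$. I anticipate essentially no obstacle beyond careful indexing: this argument is the Specht-module/squarefree-monomial analogue of Proposition \ref{prop:qrankforpurepower}, and the only things to check are that the tableau $T$ is standard (immediate from the strict inequalities above) and that its image is nonzero (the sign output by the defining formula is never $0$, and the straightening relations on the Specht module respect this explicit tabloid presentation).
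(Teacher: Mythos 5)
Your proposal is correct and follows the paper's own argument essentially verbatim: in both cases one fixes a target basis element $e_{J'}$ of $\bigwedge^\ell U$, forms the hook-shaped standard tableau with the sorted set $J' \cup \{i_1\}$ down the first column and $i_2, \dotsc, i_d$ across the remainder of the first row, and applies the defining formula for the Specht map to see that the image is $\sgn(i_1)\, e_{J'}$. You spell out the verification of the two defining conditions of Definition \ref{def:spechtqmaps} a bit more explicitly than the paper does, but the construction and the key check (standardness via $\min(J' \cup \{i_1\}) \leq i_1 < i_2$) are the same.
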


\begin{proof}
Let $J \subset I^c$ be any indexing set with $J = (j_1 , \dotsc , j_\ell)$. It suffices to show that the basis element $e_J \in \bigwedge^\ell U$ is in the image of $\psi_\ell^{I , I^c}$. 

Order the set $J \cup \{ i_1 \}$, so that
$$j_1 < \cdots < j_{k} < i_1  < j_{k+1} < \cdots < j_\ell$$
for some $k < \ell$. Then, observe that the hook tableau with $J \cup \{ i_1 \}$ ordered appropriately in the first column and $(i_2 , \dotsc , i_d)$ along the first row has image $\sgn (i_1) e_J$.
\end{proof}

\begin{cor}\label{cor:sqfreebettitable}
Adopt notation and hypotheses as in Setup \ref{set:trimmingsqfree}.  Let $K'$ be a squarefree equigenerated momomial ideal with $\overline{K'} = (x^I)$. Then, $R/K'$
\begin{equation*}\begin{tabular}{C|C C C C C C C C C}
     & 0 & 1 & \cdots &  \ell & \cdots & n-d+1  \\
     \hline 
   0  & 1 & \cdot & \cdots &  \cdot &\cdots &  \cdot \\
   
   \vdots &\cdot  & \cdot & \cdots & \cdot & \cdots & \cdot \\
   
   d-1 & \cdot & \binom{n}{d}-1 & \cdots & \binom{d+\ell-2}{\ell-1} \binom{n}{d+\ell-1} -\binom{n-d}{\ell-1} & \cdots &  \binom{n-1}{n-d} - 1 \\
\end{tabular}
\end{equation*}
In particular, $R/K'$ has projective dimension $n-d+1$ with linear resolution and defines a ring of type $\binom{n-1}{n-d} - 1$.
\end{cor}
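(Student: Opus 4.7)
The plan is to apply the iterated trimming complex of Theorem \ref{thm:itres} to the resolution $F^{d,n}_\bullet$ of Theorem \ref{thm:fredsres}, trimming the single generator $x^I$, and read off the graded Betti numbers via Corollary \ref{cor:ittorrk}.

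Concretely, I would set up Setup \ref{setup4} with $F_\bullet = F^{d,n}_\bullet$, $m=1$, and $Re_0^1 \subseteq F_1^{d,n}$ the rank-one summand corresponding to the removed generator $x^I$. Proposition \ref{prop:sqfreeidealrows} identifies the colon ideal $\mfa_1 = (K' : x^I) = (x_j : j \notin I)$, a regular sequence of $n-d$ linear forms, so I would take $G^1_\bullet$ to be its Koszul resolution; then $\rank G^1_\ell = \binom{n-d}{\ell}$, and since $e_0^1$ lives in internal degree $d$, the summand $G^1_\ell$ sits in internal degree $d+\ell$ inside the mapping cone. One also needs to verify that $J = K' + \mfa_1 \cdot (x^I)$ coincides with $K'$: each $x_j x^I$ with $j \notin I$ already lies in $K'$ since it is divisible by $x_j \cdot x^{I\setminus\{i\}}$ for any $i \in I$, and this divisor is a squarefree degree-$d$ generator of $K'$.

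Next, I would choose the comparison maps $q_\ell^1 := \psi_\ell^{I,I^c}$ of Definition \ref{def:spechtqmaps}, which realize the maps guaranteed by Proposition \ref{prop:ittheyexist}. By Proposition \ref{prop:spechtqranks} each $q_\ell^1$ is surjective for $1 \leq \ell \leq n-d$, so $\rank(q_\ell^1 \otimes k) = \binom{n-d}{\ell}$, while $q_{n-d+1}^1$ has zero target. Restricting Corollary \ref{cor:ittorrk} to graded pieces, the contribution at homological position $\ell$ splits across two potential strands: in degree $d+\ell-1$ (where $F^{d,n}_\ell$ lives) it is
\[
\binom{d+\ell-2}{\ell-1}\binom{n}{d+\ell-1} - \binom{n-d}{\ell-1},
\]
and in degree $d+\ell$ (where $G^1_\ell$ lives) it is $\binom{n-d}{\ell} - \binom{n-d}{\ell} = 0$. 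Hence the Koszul strand is entirely annihilated and the Betti table reduces to the linear strand displayed, with the last column $\ell = n-d+1$ giving $\binom{n-1}{n-d} - 1$ since $G^1_{n-d+1} = 0$; this also yields projective dimension $n-d+1$ and type $\binom{n-1}{n-d}-1$.

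The only subtle step is the degree bookkeeping in the final paragraph: I need the $q$-maps to fully absorb the Koszul strand in the mapping cone, which is precisely what the surjectivity assertion in Proposition \ref{prop:spechtqranks} delivers. Everything else is direct assembly of the ingredients developed earlier in the section.
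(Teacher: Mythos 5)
Your proof is correct and follows essentially the same route the paper intends: set up the trimming complex with $F_\bullet = F^{d,n}_\bullet$, identify $\mfa_1 = (K' : x^I) = (x_j : j \notin I)$ via Proposition \ref{prop:sqfreeidealrows}, take the Koszul complex on those $n-d$ variables as $G^1_\bullet$, use $\psi^{I,I^c}_\ell$ as the comparison maps, and read off the Betti table from Corollary \ref{cor:ittorrk} together with the surjectivity statement in Proposition \ref{prop:spechtqranks}. The paper states this as an immediate corollary; you have simply supplied the routine verifications (that $J = K'$, and the graded bookkeeping showing the Koszul strand cancels completely) that the paper leaves implicit.
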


\section{Betti Tables for Certain Classes of Equigenerated Monomial Ideals}\label{sec:removingmultiplegens}

This section is an iterated version of Section \ref{sec:qmapsfortheLandSpechtcx}; that is, we consider the \emph{iterated} trimming complex associated to the maps constructed in the previous section. It turns out that under sufficient hypotheses, these maps stay well-behvaed when removing multiple generators at a time. The following setup is similar to Setup \ref{set:trimmingmaxideal}, but with more data to keep track of:

\begin{setup}\label{set:trimmingmaxidealiter}
Let $R=k[x_1 , \dotsc , x_n]$ where $k$ is a field and let $F = \bigoplus_{i=1}^n Re_i$ be a free module of rank $n$ with map $\psi : F \to R$ sending $e_i \mapsto x_i$. Let $d \geq 1$ denote any integer and $L(\psi , d)$ the complex of Definition \ref{def:Lcomplexes}. Fix exponent vectors $\alpha^s= (\alpha^s_1 , \dotsc , \alpha^s_n)$ with $|\alpha^s|=d$ for $1 \leq s \leq r$. Assume that for all $s \neq t$, $\deg \textrm{lcm} (x^{\alpha^s} , x^{\alpha^t}) \geq d+2$. Let 
$$U_s = \begin{cases} 
\bigoplus_{j\neq i} Re_j & \textrm{if} \ \alpha^s = d \epsilon_i \\
F & \textrm{otherwise},\\
\end{cases}$$ 
with map $\psi : U_s \to R$ induced by sending $e_j \mapsto x_j$. 

Let $\phi^{I_s,\alpha^s}_\ell : L_d^\ell (F) \to \bigwedge^\ell U$ for $1 \leq \ell \leq n$ be the maps of Definition \ref{def:generalqmaps}, where 
$$I_s = \begin{cases}
[n] \backslash \{i \} & \textrm{if} \ \alpha^s = d \epsilon_i \\
[n] & \textrm{otherwise}. \\
\end{cases}$$
\end{setup}

\begin{obs}\label{obs:canuseittrimm}
Adopt notation and hypotheses as in Setup \ref{set:trimmingmaxidealiter}. Let $K'$ be an equigenerated momomial ideal with $\overline{K'} = (x^{\alpha^1}, \dots , x^{\alpha^r})$ and let
$$\mfa_s := \begin{cases}
(x_1 , \dotsc , \widehat{x_i} , \dotsc , x_n) & \textrm{if} \ \alpha^s = d \epsilon_i \\
(x_1 , \dotsc , x_n) & \textrm{otherwise} \\
\end{cases}.$$
Then $\mfa_s x^{\alpha^s} \subseteq K'$ for all $1 \leq s \leq r$.
\end{obs}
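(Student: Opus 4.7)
The plan is to verify the containment directly, generator by generator. Since $\mfa_s$ and $K'$ are both monomial ideals, $\mfa_s x^{\alpha^s} \subseteq K'$ is equivalent to the assertion that for every generator $x_j$ of $\mfa_s$, the degree-$(d+1)$ monomial $x_j x^{\alpha^s}$ lies in $K'$; and since $K'$ is equigenerated in degree $d$, this in turn reduces to exhibiting a degree-$d$ divisor $x^\beta$ of $x_j x^{\alpha^s}$ with $\beta \notin \{\alpha^1,\dots,\alpha^r\}$.

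I would split into the two cases defining $\mfa_s$. In the pure power case $\alpha^s = d\epsilon_i$, the generators of $\mfa_s$ are the $x_j$ with $j \neq i$, and the only degree-$d$ divisors of $x_j x_i^d$ are $x_i^d = x^{\alpha^s}$ and the candidate $x_j x_i^{d-1}$, corresponding to $\beta = (d-1)\epsilon_i + \epsilon_j$. In the general case $n_{\alpha^s} \geq 2$, for any generator $x_j$ of $\mfa_s$, the hypothesis guarantees that $\supp(\alpha^s) \setminus \{j\}$ is nonempty, so picking any $k$ from this set yields a candidate divisor $x^\beta$ with $\beta := \alpha^s + \epsilon_j - \epsilon_k$; since $j \neq k$, we have $\beta \neq \alpha^s$.

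The decisive step in both cases is the lcm hypothesis: a direct check shows $\textrm{lcm}(x^{\alpha^s}, x^\beta) = x_j x^{\alpha^s}$, which has degree $d+1$. Hence if $\beta = \alpha^t$ for some $t \neq s$, then $\deg \textrm{lcm}(x^{\alpha^s}, x^{\alpha^t}) = d+1 < d+2$, contradicting the standing assumption. Thus $\beta$ is not equal to any $\alpha^t$, giving $x^\beta \in G(K')$ and therefore $x_j x^{\alpha^s} \in K'$. The only mild subtlety is ensuring a valid $k$ is available in each case, which is precisely what the pure-power-versus-otherwise split in the definition of $\mfa_s$ encodes; the rest is a routine monomial computation.
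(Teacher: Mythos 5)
Your argument is correct, and it reaches the conclusion by a more direct route than the paper. The paper argues by contradiction: supposing $\mfa_t x^{\alpha^t} \not\subset K'$, it invokes Proposition \ref{prop:maxlidealrows} (the colon computation $(K_t : x^{\alpha^t}) = \mfa_t$, where $K_t$ is generated by all degree-$d$ monomials except $x^{\alpha^t}$) to conclude $\mfa_t x^{\alpha^t} \subseteq K_t$, and then observes that an element of $K_t \setminus K'$ of degree $d+1$ forces an equality $x_i x^{\alpha^s} = x_j x^{\alpha^t}$ with $s \neq t$, which violates the lcm bound. You instead prove the containment constructively: for each generator $x_j$ of $\mfa_s$ you exhibit an explicit degree-$d$ divisor $x^\beta$ of $x_j x^{\alpha^s}$ with $\beta \neq \alpha^s$ (namely $\beta = \alpha^s + \epsilon_j - \epsilon_k$ for a suitable $k \in \supp(\alpha^s)\setminus\{j\}$, which is exactly what the case split in the definition of $\mfa_s$ makes available), compute $\operatorname{lcm}(x^{\alpha^s}, x^\beta) = x_j x^{\alpha^s}$ of degree $d+1$, and conclude from the standing hypothesis $\deg\operatorname{lcm}(x^{\alpha^s},x^{\alpha^t}) \geq d+2$ that $\beta$ cannot equal any $\alpha^t$, so $x^\beta \in G(K')$. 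In effect you have inlined the content of Proposition \ref{prop:maxlidealrows} rather than citing it, which makes the argument self-contained and avoids the contradiction framing; the paper's version is shorter on the page because it leans on the earlier colon-ideal result, but both hinge on the same lcm obstruction.
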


\begin{proof}
Suppose for sake of contradiction that the containment $\mfa_t x^{\alpha^t} \not\subset K'$ for some $1 \leq t \leq r$. Let
$$K_t := ( x^\beta \mid |\beta|=d, \ \beta \neq \alpha^t \}, $$
and observe that $(K_t : x^{\alpha^t} ) = \mfa_t$ by Proposition \ref{prop:maxlidealrows}. This means that for some $s \neq t$, $x_i \cdot x^{\alpha^s} = x_j \cdot x^{\alpha^t}$, contradicting the LCM hypothesis on each $\alpha^s$.  
\end{proof}

\begin{remark}
In the notation of the statement of Observation \ref{obs:canuseittrimm}, this is saying that the construction of Theorem \ref{thm:itres} applied to the ideals $\mfa_s$, for $1 \leq s \leq r$, yields a resolution of $R/K'$. 
\end{remark}
The following Proposition makes precise the previously mentioned fact that the maps of Definition \ref{def:generalqmaps} are ``well-behaved" when removing multiple generators.

\begin{prop}\label{prop:nonearenonzero}
Adopt notation and hypotheses as in Setup \ref{set:trimmingmaxidealiter}. Enumerate the set $\supp (\alpha) = \{ k^s_1 , \dotsc , k^s_{n_{\alpha^s}} \}$ with $k^s_1 < \cdots < k^s_{n_{\alpha^s}}$. Then for all $t \neq s$ and $p \leq n_{\alpha^s}$,
$$\phi^{I^t,\alpha^t} (e_{\{k_1^s , \dotsc , k_p^s \} \cup J' } \otimes e^{\alpha^s - \epsilon_{k_p^s}} ) = 0.$$
\end{prop}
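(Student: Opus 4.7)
The plan is to unpack the formula for $\phi_\ell^{I^t,\alpha^t}$ given in the observation following Definition \ref{def:generalqmaps}, and show that the LCM hypothesis on the $\alpha^s$ forces every term in the resulting sum to vanish.

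First, I would use the identification from Remark \ref{rk:rkonstdbasis} to view the element
$$e_{\{k_1^s,\dots,k_p^s\}\cup J'}\otimes e^{\alpha^s-\epsilon_{k_p^s}} \in L_d^\ell(F)$$
as $\kappa_{\ell+1,d-1}$ applied to the corresponding tensor in $\bigwedge^{\ell+1}F\otimes S_{d-1}(F)$ (note the exterior degree is $p+|J'|=\ell+1$ and the symmetric degree is $d-1$, so this lives in the correct place). Applying the formula from the observation term-by-term, the image under $\phi_\ell^{I^t,\alpha^t}$ is a signed sum indexed by the elements $i\in\{k_1^s,\dots,k_p^s\}\cup J'$, and the $i$-th term is nonzero only when the two conditions
$$(\alpha^s-\epsilon_{k_p^s})+\epsilon_i=\alpha^t \qquad\text{and}\qquad \{k_1^s,\dots,k_p^s\}\cup J'\setminus\{i\}\subseteq I^t$$
both hold.

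The key step is to rule out the first condition using the LCM hypothesis. The equation $\alpha^s-\epsilon_{k_p^s}+\epsilon_i=\alpha^t$ rewrites monomially as $x_i\cdot x^{\alpha^s}=x_{k_p^s}\cdot x^{\alpha^t}$, so this common monomial is divisible by $\mathrm{lcm}(x^{\alpha^s},x^{\alpha^t})$; but it has degree $d+1$, contradicting the assumption $\deg\mathrm{lcm}(x^{\alpha^s},x^{\alpha^t})\geq d+2$. Thus no index $i$ can make the term nonzero, and the whole sum vanishes. The main (and really only) obstacle is bookkeeping the identification of the standard basis element as a $\kappa$-image so that the observation's formula applies directly; once that is in place, the rest is a one-line monomial degree count.
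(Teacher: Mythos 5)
Your proof is correct and follows essentially the same approach as the paper's: identify the only way a term of $\phi_\ell^{I^t,\alpha^t}$ can be nonzero, namely that some index $i$ in the exterior factor satisfies $\alpha^t = \alpha^s - \epsilon_{k_p^s} + \epsilon_i$, and then derive a contradiction from the LCM hypothesis. The only difference is cosmetic: where the paper asserts that $\alpha^t - \alpha^s = \epsilon_q - \epsilon_{k_p^s}$ is ``a clear contradiction,'' you spell out the monomial arithmetic ($x_i\cdot x^{\alpha^s}=x_{k_p^s}\cdot x^{\alpha^t}$ gives a degree-$(d+1)$ common multiple), which is a slightly cleaner rendering of the same step.
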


\begin{proof}
Suppose for sake of contradiction that there exists some $t \neq s$ and $1 \leq p \leq n_{\alpha^s}$ such that
$$\phi^{I^t,\alpha^t} (e_{\{k_1^s , \dotsc , k_p^s \} \cup J' } \otimes e^{\alpha^s - \epsilon_{k_p^s}} ) \neq 0.$$
This is possible if and only if there exists $q \in \{k_1^s , \dotsc , k_p^s \} \cup J'$ such that $\alpha^t = \alpha^s - \epsilon_{k_p^s} + \epsilon_q$. This implies that $\alpha^t - \alpha^s = \epsilon_q - \epsilon_{k_p^s}$, which is a clear contradiction to the LCM hypothesis on each $\alpha^s$.
\end{proof}

\begin{cor}\label{cor:stackedqranksmaxideal}
Adopt notation and hypotheses as in Setup \ref{set:trimmingmaxidealiter}. Then,
$$\rank \Bigg( \begin{pmatrix}
\phi^{I_1,\alpha^1}_\ell \\
\phi^{I_2,\alpha^2}_\ell \\
\vdots \\
\phi^{I_{r},\alpha^{r}}_\ell \\
\end{pmatrix} \otimes k  \Bigg)=  \sum_{s=1}^{r} \rank (\phi^{I_s,\alpha^s}_\ell \otimes k).$$
\end{cor}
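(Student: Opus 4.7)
The plan is to split the equality into two inequalities, observing that only the lower bound requires actual work. The upper bound is formal: projecting the image of the stacked map onto its $s$-th component $\bigwedge^\ell U_s$ factors through $\im(\phi^{I_s,\alpha^s}_\ell\otimes k)$, so the image of the stacked map has total $k$-dimension at most $\sum_{s=1}^{r}\rank(\phi^{I_s,\alpha^s}_\ell\otimes k)$.

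For the reverse inequality, I would reuse the explicit sets $S_s\subseteq L_d^\ell(F)$ constructed in the proof of Proposition \ref{prop:maxpowerqranks}, one for each $\alpha^s$. Recall that $S_s$ consists of the standard basis elements of the form $e_{\{k_1^s,\ldots,k_p^s\}\cup J'}\otimes e^{\alpha^s-\epsilon_{k_p^s}}$, and that $\phi^{I_s,\alpha^s}_\ell$ was shown to send $S_s$ bijectively onto a $k$-basis of $\im(\phi^{I_s,\alpha^s}_\ell\otimes k)$. The key input, which is really where the LCM hypothesis of Setup \ref{set:trimmingmaxidealiter} is used, is Proposition \ref{prop:nonearenonzero}: for every $v\in S_s$ and every $t\neq s$, one has $\phi^{I_t,\alpha^t}_\ell(v)=0$. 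Hence the stacked map sends each $v\in S_s$ to a tuple in $\bigoplus_{t=1}^{r}\bigwedge^\ell U_t$ supported entirely in its $s$-th slot.

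Combining the images of $\bigcup_{s=1}^{r}S_s$ under the stacked map therefore yields a collection of vectors that is a disjoint union across the direct-sum factors. Within the $s$-th factor, the vectors arising from $S_s$ are already linearly independent by Proposition \ref{prop:maxpowerqranks}, and vectors supported in distinct summands of a direct sum cannot participate in any nontrivial linear dependence. This produces $\sum_{s=1}^{r}|S_s|=\sum_{s=1}^{r}\rank(\phi^{I_s,\alpha^s}_\ell\otimes k)$ linearly independent vectors in the image of the stacked map, matching the upper bound and yielding the desired equality. I do not anticipate a genuine obstacle: both halves (the explicit rank-realizing sets $S_s$ and the cross-vanishing of the $\phi^{I_t,\alpha^t}_\ell$) are already in place, so the argument amounts to assembling them.
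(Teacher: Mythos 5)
Your proposal is correct and follows the argument the paper intends (the corollary is stated without an explicit proof, but it plainly follows from Proposition \ref{prop:maxpowerqranks} together with the cross-vanishing of Proposition \ref{prop:nonearenonzero}, exactly as you assemble them). The only implicit point worth noting is that the LCM hypothesis also guarantees the sets $S_s$ are pairwise disjoint, which you use tacitly when summing their cardinalities; this is immediate since a coincidence $e^{\alpha^s-\epsilon_{k_p^s}}=e^{\alpha^t-\epsilon_{k_q^t}}$ would force $\deg\mathrm{lcm}(x^{\alpha^s},x^{\alpha^t})\leq d+1$.
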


\begin{cor}\label{cor:btablemaxidealiterated}
Adopt notation and hypotheses as in Setup \ref{set:trimmingmaxidealiter}. Define $\rk_\ell := \sum_{s=1}^{r} \rank (\phi^{I_s,\alpha^s}_\ell \otimes k)$. Let $K'$ be an equigenerated momomial ideal with $\overline{K'} = (x^{\alpha^1}, \dots , x^{\alpha^r})$. Then $R/ K'$ has Betti table
\begin{equation*}\begin{tabular}{C|C C C C C C C C C}
     & 0 & 1 & \cdots &  \ell & \cdots &  n  \\
     \hline 
   0  & 1 & \cdot & \cdots &  \cdot &\cdots & \cdot \\
   
   \vdots & \cdot & \cdot & \cdots & \cdot & \cdots & \cdot \\
   
   d-1 & \cdot & \binom{n+d-1}{d}-r & \cdots & \binom{n+d-1}{\ell+d} \binom{d+\ell-2}{\ell-1} -\rk_{\ell-1} & \cdots & \binom{n+d-2}{n-1}-\sum_{s=1}^r n_{\alpha^s} \\
   
   d & \cdot & \cdot & \cdots & \sum_{s=1}^r \rank \bigwedge^\ell U_s - \rk_\ell & \cdots & r \\
\end{tabular}
\end{equation*}
\end{cor}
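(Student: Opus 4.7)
The plan is to assemble the pieces from Sections \ref{sec:trimmingcx}--\ref{sec:qmapsforschurandspecht} and the first half of Section \ref{sec:removingmultiplegens} into a single application of Corollary \ref{cor:ittorrk}. By Observation \ref{obs:canuseittrimm}, the containment $\mfa_s \cdot x^{\alpha^s} \subseteq K'$ required by Setup \ref{setup4} holds for each $s$, so Theorem \ref{thm:itres} applies. I would take the top resolution $F_\bullet$ to be the Buchsbaum--Eisenbud $L$-complex $L(\psi, d)$ resolving $R/(x_1,\dots,x_n)^d$ (minimal by Proposition \ref{prop:resnofpower}), and each $G^s_\bullet$ to be the Koszul complex on the minimal generators of $\mfa_s$, which is a minimal free resolution of $R/\mfa_s$ since $\mfa_s$ is generated by a regular sequence of variables.

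Next I would identify the comparison maps $q^s_\ell$ of Proposition \ref{prop:ittheyexist} with the $\phi^{I_s, \alpha^s}_\ell : L_d^\ell(F) \to \bigwedge^\ell U_s$ of Definition \ref{def:generalqmaps}; the commutative square displayed immediately after that definition certifies this. Corollary \ref{cor:stackedqranksmaxideal}---which rests on Proposition \ref{prop:nonearenonzero}, itself a direct consequence of the LCM hypothesis in Setup \ref{set:trimmingmaxidealiter}---then records that the rank of the stacked map $\bigoplus_s \phi^{I_s, \alpha^s}_\ell \otimes k$ equals $\rk_\ell = \sum_s \rank(\phi^{I_s, \alpha^s}_\ell \otimes k)$.

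Substituting into Corollary \ref{cor:ittorrk} gives, for $\ell \geq 2$,
\[
\dim_k \tor_\ell^R(R/K', k) = \rank L_d^{\ell-1}(F) + \sum_{s=1}^r \rank \bigwedge^\ell U_s - \rk_\ell - \rk_{\ell-1}.
\]
To upgrade this to the bigraded statement in the Betti table, I would invoke a strand-separation argument. Because $L(\psi,d)$ is linear, $L_d^{\ell-1}(F)$ contributes in internal degree $d + \ell - 1$ (row $d-1$), while the Koszul complex on $\mfa_s$, shifted so that its final map is multiplication by $x^{\alpha^s}$, places $\bigwedge^\ell U_s$ in internal degree $d + \ell$ (row $d$). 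Since each $\phi^{I_s, \alpha^s}_\ell$ preserves internal degree, the $\rk_\ell$ subtraction affects only the row $d$ strand and the $\rk_{\ell-1}$ subtraction only the row $d-1$ strand, yielding the two entries at position $\ell$ claimed in the table. The first column is handled by the companion $\mu$-formula in Corollary \ref{cor:ittorrk}, using the direct observation (from Propositions \ref{prop:qrankforpurepower} and \ref{prop:maxpowerqranks}) that $\mu(\mfa_s) = \rank(\phi^{I_s, \alpha^s}_1 \otimes k)$ for every $s$, which immediately gives $\mu(K') = \binom{n+d-1}{d} - r$. The closed-form entries then follow by plugging $\rank L^a_b(F) = \binom{n+b-1}{a+b}\binom{a+b-1}{a}$ into the general expression. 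The only mildly subtle point is the internal-degree bookkeeping for the strand separation; the rest is a routine substitution.
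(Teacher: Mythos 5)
Your argument is correct and is exactly the chain of citations the paper leaves implicit: Observation \ref{obs:canuseittrimm} licenses the application of Theorem \ref{thm:itres} with $F_\bullet = L(\psi,d)$ and $G^s_\bullet$ the shifted Koszul complex on $\mfa_s$, the comparison maps are the $\phi^{I_s,\alpha^s}_\ell$, Corollary \ref{cor:stackedqranksmaxideal} supplies the stacked rank, and Corollary \ref{cor:ittorrk} together with the degree bookkeeping you describe yields the two strands. One small point worth flagging: carrying out your final substitution $\rank L_d^{\ell-1}(F) = \binom{n+d-1}{\ell+d-1}\binom{d+\ell-2}{\ell-1}$ gives $\binom{n+d-1}{\ell+d-1}$ in the row-$(d-1)$ entry, whereas the table in the paper prints $\binom{n+d-1}{\ell+d}$; the $\ell=n$ column (which must equal $\binom{n+d-2}{n-1}-\sum_s n_{\alpha^s}$) confirms that your version is the right one and the paper's exponent is an off-by-one typo that also appears in Corollaries \ref{cor:btablepurepower}, \ref{cor:maxlidealbtable}, and \ref{cor:btablepurepowers}.
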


As a special case of the above, we can compute the Betti table of an equigenerated monomial ideal whose complementary ideal consists only of pure powers.

\begin{cor}\label{cor:btablepurepowers}
Adopt notation and hypotheses as in Setup \ref{set:trimmingmaxidealiter} and let $B = \{ k_1 < \cdots < k_r \}$. Let $K'$ be an equigenerated momomial ideal with $\overline{K'} = (x_{k_1}^d, \dots , x_{k_r}^d)$. Then $R/ K'$ has Betti table
\begin{equation*}\begin{tabular}{C|C C C C C C C C C}
     & 0 & 1 & \cdots &  \ell & \cdots &  n-d+1  \\
     \hline 
   0  & 1 & \cdot & \cdots & \cdot &\cdots &  \cdot \\
   
   \dots & \cdot & \cdot & \cdots & \cdot & \cdots & \cdot  \\
   
   d-1 & \cdot & \binom{n+d-1}{d}-r & \cdots & \binom{n+d-1}{\ell+d} \binom{d+\ell-2}{\ell-1} -r\binom{n-1}{\ell-1} & \cdots & \binom{n+d-2}{n-1}-r \\
\end{tabular}
\end{equation*}
In particular, $R/ K'$ has projective dimension $n$ with linear resolution and defines a ring of type $\binom{n+d-2}{n-1}-r$.
\end{cor}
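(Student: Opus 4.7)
The corollary is a specialization of Corollary \ref{cor:btablemaxidealiterated} to the pure-power case $\alpha^s = d\epsilon_{k_s}$ for each $s=1,\dotsc,r$. The plan is to verify the setup hypotheses, compute the relevant $q$-map ranks, and then substitute into the general formula.

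First, the LCM condition of Setup \ref{set:trimmingmaxidealiter} is immediate: for distinct $s \neq t$ we have $\operatorname{lcm}(x_{k_s}^d, x_{k_t}^d) = x_{k_s}^d x_{k_t}^d$ of degree $2d \geq d+2$ whenever $d \geq 2$ (the case $d=1$ is trivial, as $K'$ is then generated by a subset of the variables, reducing the claim to a truncated Koszul complex). Next, since $n_{\alpha^s}=1$ for every $s$, Proposition \ref{prop:qrankforpurepower} gives $\rank(\phi^{I_s,\alpha^s}_\ell \otimes k) = \binom{n-1}{\ell}$ for $1 \leq \ell \leq n-1$, and each $U_s = \bigoplus_{j \neq k_s} R e_j$ has rank $n-1$. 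Corollary \ref{cor:stackedqranksmaxideal} (which ultimately rests on Proposition \ref{prop:nonearenonzero}) then yields $\rk_\ell = r\binom{n-1}{\ell}$.

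Substituting into the Betti table of Corollary \ref{cor:btablemaxidealiterated}, the row-$d$ entry at homological position $\ell$ becomes
$$\sum_{s=1}^r \rank \bigwedge^\ell U_s - \rk_\ell \;=\; r\binom{n-1}{\ell} - r\binom{n-1}{\ell} \;=\; 0,$$
so the entire row $d$ vanishes and the resolution is linear. The row-$(d-1)$ entries simplify directly to the displayed form, and the entry at position $n$ is $\binom{n+d-2}{n-1}-r$ (using $\sum_s n_{\alpha^s} = r$), which is the stated type. The only substantive point is the cancellation $\sum_s \rank \bigwedge^\ell U_s = \rk_\ell$, which is automatic once the pure-power hypothesis is unpacked; there is no real obstacle once Corollary \ref{cor:btablemaxidealiterated} is in hand.
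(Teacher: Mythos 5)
Your proposal is correct and follows the same route as the paper, which presents Corollary \ref{cor:btablepurepowers} as an immediate specialization of Corollary \ref{cor:btablemaxidealiterated}: one verifies $n_{\alpha^s}=1$ and $\rank(\phi^{I_s,\alpha^s}_\ell\otimes k)=\binom{n-1}{\ell}=\rank\bigwedge^\ell U_s$ from Proposition \ref{prop:qrankforpurepower}, so that $\rk_\ell=r\binom{n-1}{\ell}$ by Corollary \ref{cor:stackedqranksmaxideal}, the row-$d$ entries cancel, and the row-$(d-1)$ entries take the displayed form. One small caution: the aside about $d=1$ is misleading. For $r\geq 2$ the LCM hypothesis $\deg\operatorname{lcm}(x_{k_s}^d,x_{k_t}^d)=2d\geq d+2$ forces $d\geq 2$, so the corollary simply does not assert anything when $d=1$ and $r\geq 2$; and in that excluded range the displayed formula does \emph{not} reduce to the Betti numbers of a Koszul complex on $n-r$ variables (for instance $n=4$, $r=2$, $\ell=2$ gives a discrepancy). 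The correct remark is that the pure-power hypothesis already forces $d\geq 2$ once $r\geq 2$, and for $r=1$ the LCM condition is vacuous.
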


The rest of this section is just the squarefree analog of the first half of this section. It turns out that the squarefree case is, in some sense, much simpler than the non-squarefree case. We will see that these ideals \emph{always} have a linear minimal free resolution. We will first need to adopt the following setup, which the reader should take as the squarefree analog of Setup \ref{set:trimmingmaxidealiter}.

\begin{setup}\label{set:ittrimmingsqfree}
Let $R=k[x_1 , \dotsc , x_n]$ where $k$ is a field and let $F_\bullet^{d,n}$ denote the complex of Definition \ref{def:freddifferentials}. Fix indexing sets $I_j= (i_{j1} , \dotsc , i_{jd})$ for $1 \leq j \leq r$ with the property that $|I_j \cap I_i| \leq d-2$ for all $i \neq j$. Let $U_j= \bigoplus_{\ell \notin I_j} Re_\ell$ with map $\psi : U_j \to R$ defined by sending $e_\ell \mapsto x_\ell$. 

Let $\psi^{I_j,I_j^c}_\ell : F^{d,n}_\ell \to \bigwedge^\ell U_j$ for $1 \leq \ell \leq n-d$ be the maps of Definition \ref{def:spechtqmaps}, where $I_j^c = [n] \backslash I_j$.
\end{setup}

Observe that the proof of the following is essentially identical to that of Observation \ref{obs:canuseittrimm}, where we employ Proposition \ref{prop:sqfreeidealrows} instead.

\begin{obs}\label{obs:canuseittrimmsqfree}
Adopt notation and hypotheses as in Setup \ref{set:ittrimmingsqfree}. Let $K'$ be a squarefree equigenerated momomial ideal with $\overline{K'} = (x^{I_1}, \dots , x^{I_r})$
and let
$$\mfa_s := (x_j \mid j \notin I_s) \quad (1 \leq s \leq r).$$
Then $\mfa_s x^{I_s} \subseteq K'$ for all $1 \leq s \leq r$.
\end{obs}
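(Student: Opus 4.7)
The plan is to imitate the argument used for Observation \ref{obs:canuseittrimm}, with Proposition \ref{prop:sqfreeidealrows} playing the role that Proposition \ref{prop:maxlidealrows} played in the non-squarefree setting. The essential point is that removing a single generator $x^{I_s}$ from the ideal of all degree-$d$ squarefree monomials produces a colon equal to $(x_j \mid j \notin I_s) = \mfa_s$, so the obstruction to $\mfa_s x^{I_s} \subseteq K'$ must come entirely from the other excluded generators $x^{I_t}$, $t \neq s$. The intersection hypothesis $|I_i \cap I_j| \leq d-2$ is tailored precisely to rule this out.

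Concretely, I would argue by contradiction: suppose $\mfa_s x^{I_s} \not\subseteq K'$ for some $s$ and pick $j \notin I_s$ with $x_j x^{I_s} \notin K'$. Since $x_j x^{I_s}$ is a squarefree monomial of degree $d+1$ supported on $I_s \cup \{j\}$, any generator of $K'$ dividing it must be $x^J$ for some $d$-subset $J$ of $I_s \cup \{j\}$. The only such subsets are $I_s$ itself together with the $d$ sets of the form $(I_s \setminus \{k\}) \cup \{j\}$ for $k \in I_s$. By assumption none of these divisors lies in $G(K')$, so each one must coincide with some $I_t$.

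Now fix any $k \in I_s$ (possible because $d \geq 1$) and let $t$ be such that $I_t = (I_s \setminus \{k\}) \cup \{j\}$. If $t = s$ then $j \in I_s$, contradicting the choice of $j$; so $t \neq s$, and
\[ |I_s \cap I_t| \geq |I_s \setminus \{k\}| = d - 1, \]
contradicting the standing hypothesis $|I_i \cap I_j| \leq d-2$. This completes the proof. The only mildly tricky point is the bookkeeping to see that squarefreeness forces the divisors of $x_j x^{I_s}$ to be exactly the $d+1$ sets above, so that any failure of the containment yields an overlap of size at least $d-1$ between two distinct $I_i$'s; everything else is a direct transcription of the non-squarefree argument.
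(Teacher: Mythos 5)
Your proof is correct and follows essentially the same contradiction argument that the paper uses, which it in turn takes verbatim from Observation \ref{obs:canuseittrimm} with Proposition \ref{prop:sqfreeidealrows} substituted for Proposition \ref{prop:maxlidealrows}; you simply spell out explicitly the enumeration of degree-$d$ squarefree divisors of $x_j x^{I_s}$ that the paper leaves implicit.
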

In a similar manner, the proof of the following Proposition is essentially identical to that of Proposition \ref{prop:nonearenonzero}.

\begin{prop}
Adopt notation and hypotheses as in Setup \ref{set:ittrimmingsqfree}. Then for all $t \neq s$ and $p \leq d$,
$$\psi^{I_t,I_t^c} (e_{\{i_{1s} , \dotsc , i_{ps} \} \cup J' } \otimes e^{I_s - \epsilon_{i_{ps}}} ) = 0.$$
\end{prop}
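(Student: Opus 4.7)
The plan is to proceed in complete parallel with the proof of Proposition \ref{prop:nonearenonzero}, replacing the LCM hypothesis on exponent vectors with the intersection hypothesis $|I_j\cap I_i|\le d-2$ that appears in Setup \ref{set:ittrimmingsqfree}. The key observation is that Definition \ref{def:spechtqmaps} only produces a nonzero output when the ``second index set'' parameter of $\psi$ can be realized as $\{b_1,\dots,b_{d-1}\}\cup\{a_q\}$ for some entry $a_q$ in the first column of the input tableau; so I just need to argue that this combinatorial condition is incompatible with the intersection hypothesis.

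More precisely, I would first unpack what the claim asserts: using Notation \ref{notation:simplifyingTableaux}, the element
$$e_{\{i_{1s},\dotsc,i_{ps}\}\cup J'}\otimes e^{I_s-\epsilon_{i_{ps}}}$$
corresponds to a tableau whose first column is $\{i_{1s},\dotsc,i_{ps}\}\cup J'$ and whose first row past the corner is indexed by $I_s\setminus\{i_{ps}\}$. By the case distinction in Definition \ref{def:spechtqmaps}, $\psi^{I_t,I_t^c}$ applied to this element is nonzero if and only if there exists $q\in\{i_{1s},\dotsc,i_{ps}\}\cup J'$ with $I_t=(I_s\setminus\{i_{ps}\})\cup\{q\}$, and the remaining first-column entries lie in $I_t^c$.

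Next I would rule out the first condition. If $q=i_{ps}$, then $I_t=I_s$, contradicting $t\ne s$. Otherwise the equality $I_t=(I_s\setminus\{i_{ps}\})\cup\{q\}$ means that $I_t$ and $I_s$ differ by exactly one element, so $|I_t\cap I_s|=d-1$, contradicting the hypothesis $|I_t\cap I_s|\le d-2$ imposed in Setup \ref{set:ittrimmingsqfree}. Either way, the image must vanish.

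There is no real obstacle here: the argument is purely a bookkeeping translation of the previous proof, with the only subtle point being to double-check that the squarefree analogue of the ``LCM condition'' used in Proposition \ref{prop:nonearenonzero} is precisely the intersection hypothesis $|I_t\cap I_s|\le d-2$, which it is because two distinct squarefree monomials of degree $d$ have $\mathrm{lcm}$ of degree $\ge d+2$ exactly when the supporting index sets share at most $d-2$ elements. Given the already-written precedent, I would simply note the parallel and present a short contradiction argument in place of a longer calculation.
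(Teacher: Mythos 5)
Your argument is correct and is exactly the parallel of the proof of Proposition \ref{prop:nonearenonzero} that the paper invokes (the paper itself states only that the proof is ``essentially identical'' and leaves the details to the reader). In particular, your translation of the LCM hypothesis into the condition $|I_t\cap I_s|\le d-2$ via $\deg\operatorname{lcm}(x^{I_s},x^{I_t}) = 2d-|I_s\cap I_t|$, and the observation that nonvanishing would force $I_t=(I_s\setminus\{i_{ps}\})\cup\{q\}$ and hence $|I_t\cap I_s|=d-1$, is precisely the intended argument.
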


\begin{cor}\label{cor:stackedqranksmaxidealsqfree}
Adopt notation and hypotheses as in Setup \ref{set:ittrimmingsqfree}. Then,
$$\rank \Bigg( \begin{pmatrix}
\psi^{I_1,I_1^c}_\ell \\
\psi^{I_2,I_2^c}_\ell \\
\vdots \\
\psi^{I_{r},I_r^c}_\ell \\
\end{pmatrix} \otimes k  \Bigg)=  \sum_{s=1}^{r} \rank (\psi^{I_s,I_s^c}_\ell \otimes k).$$
\end{cor}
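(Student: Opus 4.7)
The plan is to mirror the argument already carried out for Corollary \ref{cor:stackedqranksmaxideal}. The inequality $\rank \leq \sum_{s=1}^{r} \rank(\psi^{I_s, I_s^c}_\ell \otimes k)$ is automatic, since the rank of a map into a direct sum is at most the sum of the ranks of its components. So the real task is to produce, inside $F^{d,n}_\ell$, a set of size $\sum_s \rank(\psi^{I_s,I_s^c}_\ell \otimes k) = r\binom{n-d}{\ell}$ whose image under the stacked map is linearly independent in $\bigoplus_{t=1}^r \bigwedge^\ell U_t$.

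For this I would reuse the explicit preimages constructed in the proof of Proposition \ref{prop:spechtqranks}. Fix $1 \leq s \leq r$ and $J \subseteq I_s^c$ with $|J| = \ell$; write $[T_{J,s}]$ for the hook tableau whose first column is $J \cup \{i_{s1}\}$ (placed in increasing order) and whose first row is $(i_{s2}, \dots, i_{sd})$. By the computation in Proposition \ref{prop:spechtqranks},
\begin{equation*}
\psi^{I_s, I_s^c}_\ell ([T_{J,s}]) = \sgn(i_{s1})\, e_J \in \bigwedge^\ell U_s,
\end{equation*}
so the family $\mathcal{T}_s := \{[T_{J,s}] : J \subseteq I_s^c,\ |J| = \ell\}$ has cardinality $\binom{n-d}{\ell}$ and maps onto a basis of $\bigwedge^\ell U_s$.

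The key step is to invoke the preceding proposition, which asserts precisely that for every $t \neq s$ one has $\psi^{I_t, I_t^c}_\ell([T_{J,s}]) = 0$. Under the stacked map each $[T_{J,s}]$ is therefore sent to a tuple whose only nonzero coordinate sits in the $s$-th slot and equals $\sgn(i_{s1}) e_J$. Consequently the images of the disjoint union $\bigsqcup_{s=1}^r \mathcal{T}_s$ are linearly independent in $\bigoplus_{t=1}^r \bigwedge^\ell U_t$, because the nonzero entries occur in distinct summands and within each summand they form a basis. This gives the reverse inequality, proving equality.

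I do not expect a genuine obstacle here: the entire content sits in the vanishing statement of the preceding proposition, which in turn is the squarefree analogue of Proposition \ref{prop:nonearenonzero} and uses the intersection hypothesis $|I_s \cap I_t| \leq d-2$ in the role played by the LCM hypothesis in the non-squarefree setting. The main point worth verifying carefully during write-up is only that $|\bigsqcup_s \mathcal{T}_s| = r \binom{n-d}{\ell}$ matches $\sum_s \rank(\psi^{I_s, I_s^c}_\ell \otimes k)$, which is immediate from Proposition \ref{prop:spechtqranks}.
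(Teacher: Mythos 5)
Your proposal is correct and matches the intended argument: the paper leaves Corollary \ref{cor:stackedqranksmaxidealsqfree} without an explicit proof, but its placement immediately after the vanishing proposition and Proposition \ref{prop:spechtqranks} makes clear that the intended argument is exactly yours — the hook tableaux $[T_{J,s}]$ realize the full rank $\binom{n-d}{\ell}$ in the $s$-th block and die under $\psi^{I_t,I_t^c}_\ell$ for $t \neq s$ (this is the $p=1$ case of the preceding proposition), so the block-diagonal images give $r\binom{n-d}{\ell}$ linearly independent vectors, which meets the trivial upper bound.
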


\begin{cor}\label{cor:sqfreesubsetbtable}
Adopt notation and hypotheses as in Setup \ref{set:ittrimmingsqfree}. Let $K'$ be a squarefree equigenerated momomial ideal with $\overline{K'} = (x^{I_1}, \dots , x^{I_r})$. Then $R/K'$ has Betti table
\begin{equation*}\begin{tabular}{C|C C C C C C C C C}
     & 0 & 1 & \cdots &  \ell & \cdots & n-d+1  \\
     \hline 
   0  & 1 & \cdot & \cdots &  \cdot &\cdots &  \cdot \\
   
   \vdots & \cdot & \cdot & \cdots & \cdot & \cdots & \cdot \\
   
   d-1 & \cdot & \binom{n}{d}-r & \cdots & \binom{d+\ell-2}{\ell-1} \binom{n}{d+\ell-1} -r\binom{n-d}{\ell-1} & \cdots &  \binom{n-1}{n-d} - r \\
\end{tabular}
\end{equation*}
In particular, $R/K'$ has projective dimension $n-d+1$ with linear resolution and defines a ring of type $\binom{n-1}{n-d} - r$.
\end{cor}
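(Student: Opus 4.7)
The plan is to apply the iterated trimming complex of Theorem~\ref{thm:itres} to the resolution $F_\bullet^{d,n}$ of the ideal of all squarefree degree $d$ monomials, deleting precisely the $r$ generators $x^{I_1}, \dots, x^{I_r}$. First, I would invoke Observation~\ref{obs:canuseittrimmsqfree} to verify that the containment $\mfa_s x^{I_s} \subseteq K'$ holds with $\mfa_s = (x_j \mid j \notin I_s)$, so that the trimming construction applies with these ideals playing the role of the $\mfa_i$ in Setup~\ref{setup4}. Since each $\mfa_s$ is generated by a regular sequence of $n-d$ variables, the Koszul complex on those variables furnishes a minimal free resolution $G^s_\bullet$ with $\rank G^s_i = \binom{n-d}{i}$ and with all generators of $G^s_i$ sitting in internal degree $d+i$ of the mapping cone after the degree shift by $\deg(x^{I_s}) = d$.

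Next, I would identify the required comparison maps of Proposition~\ref{prop:ittheyexist} with the maps $\psi_\ell^{I_s, I_s^c}$ of Definition~\ref{def:spechtqmaps}: the commuting square built into that definition is exactly the compatibility condition needed. By Proposition~\ref{prop:spechtqranks} each $\psi_\ell^{I_s, I_s^c} \otimes k$ is surjective of rank $\binom{n-d}{\ell}$, and Corollary~\ref{cor:stackedqranksmaxidealsqfree} assembles these into the combined rank $r\binom{n-d}{\ell}$.

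Now I would plug these numbers into Corollary~\ref{cor:ittorrk}. For $\ell \geq 2$, the Koszul contribution $\sum_s \rank G^s_\ell = r\binom{n-d}{\ell}$ cancels identically against $\rank(q_\ell \otimes k)$, leaving
\[
\dim_k \tor_\ell^R(R/K', k) \;=\; \binom{d+\ell-2}{\ell-1}\binom{n}{d+\ell-1} - r\binom{n-d}{\ell-1},
\]
which is precisely the claimed Betti number. The case $\ell = 1$ follows from the second formula of the same corollary, giving $\mu(K') = \binom{n}{d} - r + r(n-d) - r(n-d) = \binom{n}{d} - r$, and minimality is inherited from $F_\bullet^{d,n}$ together with the individual Koszul complexes.

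The main subtlety, and the only real obstacle to the linearity claim, is confirming that every surviving Betti number lands in the single $(d-1)$-row of the table. Since each $G^s_\bullet$ is a linear Koszul complex and the generator $e_0^s$ sits in degree $d$, the summand $G^s_i$ appears in homological degree $i+1$ and internal degree $d+i$ in the mapping cone; combined with the already linear strand of $F_\bullet^{d,n}$, this pins every surviving Betti number to the $(d-1)$-row, yielding a linear resolution. The projective dimension $n-d+1$ and type $\binom{n-1}{n-d} - r$ are then read off the rightmost column by specializing to $\ell = n-d+1$.
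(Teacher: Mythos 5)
Your proof is correct and follows exactly the route the paper intends: invoke Observation~\ref{obs:canuseittrimmsqfree} to set up the iterated trimming complex with Koszul complexes $G^s_\bullet$ resolving $\mfa_s$, identify the $q$-maps with $\psi_\ell^{I_s,I_s^c}$, feed the surjectivity/rank computations of Proposition~\ref{prop:spechtqranks} and Corollary~\ref{cor:stackedqranksmaxidealsqfree} into Corollary~\ref{cor:ittorrk}, and read off linearity from the cancellation of the entire Koszul contribution. The paper leaves this corollary without an explicit proof precisely because it is the direct analogue of the argument spelled out for Corollary~\ref{cor:maxlidealbtable}, and you have reconstructed it faithfully, including the degree bookkeeping that pins everything to the $(d-1)$-row.
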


\section{Explicit Minimal Free Resolutions}\label{sec:linearstrandandres}

In this section we produce the explicit minimal free resolutions of all of the ideals considered in Section \ref{sec:removingmultiplegens}. In particular, for the cases where the resolutions were linear, these resolutions may be obtained by simply taking the kernel of the morphisms of complexes constructed in the previous sections. The proofs of these results are based on the following more general theorem, which describes how to extract ``minimal" summands of mapping cones of complexes when the associated morphism of complexes is split. This first result is a specialized version of a result by Miller and Rahmati (see \cite[Proposition 2.1]{miller2018free})

\begin{theorem}\label{thm:splitMappingCone}
Consider the morphism of complexes
\begin{equation}\label{eq:coneDiagram}
    \xymatrix{\cdots \ar[r]^-{d_{k+1}} & F_{k} \ar[d]^-{q_k} \ar[r]^-{d_{k}} & \cdots \ar[r]^{d_2}& F_1 \ar[d]^-{q_1} \ar[r]^-{d_1} & F_0 \ar[d]^-{d_0} \\
\cdots \ar[r]^-{m_{k+1}} & G_k \ar[r]^-{m_k} & \cdots \ar[r]^-{m_2} & G_1 \ar[r]^-{m_1} & R. \\}
\end{equation}
For each $k >0$, let
$$A_k := \ker q_k, \ C_k := \coker q_k, \ \textrm{and} \ B_k := \im q_k,$$
and assume that the short exact sequences
$$0 \to A_k \to F_k \to B_k \to 0, \ \textrm{and}$$
$$0 \to B_k \to G_k \to C_k \to 0$$
are split, with $C_1 = 0$. Then the mapping cone of \ref{eq:coneDiagram} is the direct sum of a split exact complex and the following complex:
$$\cdots \to \begin{matrix} A_{k-1} \\ \oplus \\ C_k \end{matrix} \xrightarrow{\ell_k} \begin{matrix} A_{k-2} \\ \oplus \\ C_{k-1} \end{matrix} \xrightarrow{\ell_{k-1}} \cdots \xrightarrow{ \ \ell_3 \ } \begin{matrix} A_{1} \\ \oplus \\ C_{2} \end{matrix} \xrightarrow{ \ \ell_{2} \ } F_0 \xrightarrow{ \ d_0  \ } R,$$
where
\begingroup\allowdisplaybreaks
\begin{align*}
    \ell_k &:= \begin{pmatrix}
d_{k-1} & \Theta_k \\
0 & - m_k \\
\end{pmatrix}, \quad (k \geq 3), \\
\ell_2 &:= \begin{pmatrix}
d_{1} & \Theta_2 \\
\end{pmatrix}, 
\end{align*}
\endgroup

and $\Theta_k: C_k \to A_{k-2}$ is the composition
\begin{equation*}
\begin{split}
    C_k &\xrightarrow{\textrm{inclusion}} G_k \\
    &\xrightarrow{m_k} G_{k-1} \\
    &\xrightarrow{\textrm{projection}} B_{k-1} \\
    &\xrightarrow{\textrm{inclusion}} F_{k-1} \\
    &\xrightarrow{d_{k-1}} F_{k-2} \\
    &\xrightarrow{\textrm{projection}} A_{k-2} \\
    \end{split}
\end{equation*}
\end{theorem}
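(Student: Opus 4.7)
The plan is to perform Gaussian elimination on the mapping cone, exploiting the splittings to identify identity blocks that can be cancelled. First, I would fix splittings so that $F_k = A_k \oplus B_k$ and $G_k = B_k \oplus C_k$, with the two $B_k$ summands identified via the isomorphism $q_k|_{B_k}$. In these coordinates, $q_k$ is the projection $A_k \oplus B_k \twoheadrightarrow B_k$ followed by the inclusion $B_k \hookrightarrow B_k \oplus C_k$. Using the chain-map relation $q_{k-1} d_k = m_k q_k$ together with this block form, I would deduce that $d_k(A_k) \subseteq A_{k-1}$ (the $B$-component of $d_k|_{A_k}$ must vanish), that $m_k(B_k) \subseteq B_{k-1}$ (the $C$-component of $m_k|_{B_k}$ must vanish), and that the two induced maps $B_k \to B_{k-1}$ coming from $d_k$ and $m_k$ coincide. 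Writing this common map as $\mu_k$, the differentials take the block-triangular form
$$d_k = \begin{pmatrix} \alpha_k & \ast \\ 0 & \mu_k \end{pmatrix}, \qquad m_k = \begin{pmatrix} \mu_k & \ast \\ 0 & \gamma_k \end{pmatrix},$$
where $\alpha_k \colon A_k \to A_{k-1}$ and $\gamma_k \colon C_k \to C_{k-1}$ are the induced differentials that will become the $d$ and $-m$ blocks of $\ell_k$.

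Next, I would expand the cone differential $\partial_k \colon F_{k-1} \oplus G_k \to F_{k-2} \oplus G_{k-1}$ in the refined coordinates $A_{k-1} \oplus B_{k-1} \oplus B_k \oplus C_k$. The key observation is that for every $k \geq 2$, the entry from the $B_{k-1}$-summand of $F_{k-1}$ (sitting in cone position $k$) to the $B_{k-1}$-summand of $G_{k-1}$ (sitting in cone position $k-1$) is exactly $\mathrm{id}_{B_{k-1}}$. Applying the standard Gaussian-elimination lemma for chain complexes iteratively to each of these identity blocks peels off a split exact subcomplex, namely a direct sum of contractible pairs $B_{k-1} \xrightarrow{\mathrm{id}} B_{k-1}$ supported in consecutive homological degrees. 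The residual complex then has terms $A_{k-1} \oplus C_k$ for $k \geq 2$, with $F_0$ at position $1$ and $R$ at position $0$; the hypothesis $C_1 = 0$ ensures no stray $C$-summand appears at position $1$.

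Finally, I would verify the claimed shape of $\ell_k$. The diagonal blocks on $A_{k-1} \oplus C_k$ are $\alpha_{k-1}$ and $-\gamma_k$, matching $d_{k-1}$ and $-m_k$ up to the obvious identifications. The off-diagonal map $\Theta_k \colon C_k \to A_{k-2}$ arises precisely as the Gaussian-elimination correction term: an element of $C_k$ is pushed by $m_k$ into $G_{k-1}$, producing a $B_{k-1}$-component that gets eliminated; this component is identified via the inverse of the identity block with the $B_{k-1}$-summand of $F_{k-1}$, to which $d_{k-1}$ is applied, and finally the $A_{k-2}$-component is projected out. This is exactly the composition defining $\Theta_k$ in the statement. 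The main obstacle is careful bookkeeping of signs across the simultaneous eliminations at every homological degree, but no additional input beyond the chain-map compatibility $q_{k-1}d_k = m_k q_k$ and the given splittings is required.
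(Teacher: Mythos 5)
Your argument is correct. The paper does not actually supply its own proof of this theorem—it is stated as a specialization of Miller and Rahmati's Proposition 2.1 and cited rather than proved—so there is no internal proof to compare against, but the route you take (choose splittings $F_k \cong A_k \oplus B_k$ and $G_k \cong B_k \oplus C_k$, deduce from $q_{k-1}d_k = m_kq_k$ the block-upper-triangular form of $d_k$ and $m_k$ with a common middle block $\mu_k \colon B_k \to B_{k-1}$, and then iteratively apply Gaussian elimination on the identity blocks $B_{k-1} \to B_{k-1}$ appearing between cone degrees $k$ and $k-1$) is exactly the standard argument one would expect Miller--Rahmati to give. All the key deductions check out: for $a \in A_k$, $q_{k-1}d_k(a) = m_kq_k(a) = 0$ gives $d_k(A_k)\subseteq A_{k-1}$; for $b = q_k(f) \in B_k$, $m_k(b) = q_{k-1}d_k(f) \in B_{k-1}$; and $q_{k-1}d_k\sigma_k = m_kq_k\sigma_k = m_k|_{B_k}$ shows the two induced maps on the $B$-strand agree (independently of the chosen section $\sigma_k$). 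The iterated cancellations do not interfere because, at cone degree $k = F_{k-1}\oplus G_k$, the block $B_{k-1}$ being removed at the transition $k \to k-1$ lives in the $F_{k-1}$-summand while the block $B_k$ removed at the transition $k+1 \to k$ lives in the $G_k$-summand, and the Gaussian-elimination lemma only corrects the differential at the transition where the cancellation occurs, leaving adjacent differentials merely restricted or projected. The one small imprecision is phrasing the identification of the two copies of $B_{k-1}$ as ``via $q_k|_{B_k}$''—you mean the restriction of $q_{k-1}$ to the chosen complement of $A_{k-1}$ inside $F_{k-1}$—but this does not affect the argument. Your acknowledgment that sign bookkeeping in the cone convention is the only remaining routine step is accurate.
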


\begin{remark}
In the statement of Theorem \ref{thm:splitMappingCone}, it is understood that the differentials $d_k$ and $m_k$ appearing in the matrix form of $\ell_k$ are the maps induced by restricting to the subcomplex/quotient complex $A_\bullet$ and $C_\bullet$, respectively.
\end{remark}
In the next few definitions/results, we will be constructing the constituent building blocks of the minimal free resolution of the ideals of interest in Theorem \ref{thm:theMinlResn}.

\begin{definition}\label{def:Lsubmodules}
Adopt notation and hypotheses as in Setup \ref{set:trimmingmaxidealiter}, and let $B=\{ \alpha^1 , \dotsc , \alpha^r \}$. For each $s$, write $\supp (\alpha^s) = \{ k_1^s < \dots < k_{n_{\alpha^s}}^s \}$. For each $i>0$, define the free submodule $L_d^{i,B} (F) \subseteq L_d^i (F)$ to be generated by the following collection of basis elements, denoted $\cat{S}$ (all terms appearing are assumed to be standard basis elements as in Remark \ref{rk:rkonstdbasis}):
{\small
$$\begin{cases}
e_J \otimes e^\beta & \textrm{if} \ \beta \neq \alpha^s - \epsilon_{k_i^s} \ \textrm{for some} \ i, \\
e_J \otimes e^{\alpha - \epsilon_{k_p^s}} & \textrm{if} \ k_p^s \notin J, \\ 
\sgn (k_p^s) e_{J \cup \{ k_p^s \}} \otimes e^{\alpha - \epsilon_{k_p^s}} + \sgn(k_t^s) e_{\{k_1^s , \dotsc , k_t^s \} \cup J'} \otimes e^{\alpha - \epsilon_{k_t^s}} & \textrm{if} \ J= \{k_1^s , \dotsc , k_{t-1}^s \} \cup J' \\
&\textrm{for some} \ J', \  k_t^s \notin J \\
\end{cases}$$}
for all $1 \leq s \leq r$, $1 \leq p \leq n_{\alpha^s}$, where $J = (j_0 < \cdots < j_i)$.
\end{definition}

\begin{remark}\label{rk:submodsSimple}
In the case that $\alpha^s = d \epsilon_{i_s}$ for some indicies $i_1 < \dots < i_r$, the submodules $L^{i,B}_d$ as in Definition \ref{def:Lsubmodules} are obtained by simply deleting all standard basis elements of the form
$$e_{\{ i_s \} \cup J} \otimes e_{i_s}^{d-1}\qquad (i_s \notin J).$$
\end{remark}

\begin{obs}\label{obs:Lranksanddiffer}
Let $L_d^{i,B} (F)$ denote the submodule of Definition \ref{def:Lsubmodules}. Then the Koszul differential induces a map
$$L_d^{i,B} (F) \to L_d^{i-1,B} (F).$$
Moreover, if $\rk_i$ is as in the statement of Corollary \ref{cor:btablemaxidealiterated}, then
\begin{align*}
    \rank L_d^{i,B} (F) &= \rank L_d^i (F) - \rk_i \\
    &= \binom{n+d-1}{i+d} \binom{d+i-1}{i} - \rk_i. \\
\end{align*}
\end{obs}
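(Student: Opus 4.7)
The plan is to identify $L_d^{i,B}(F)$ with the kernel of the stacked $R$-linear map
$$\Phi_i := \bigoplus_{s=1}^r \phi_i^{I_s,\alpha^s} : L_d^i(F) \longrightarrow \bigoplus_{s=1}^r \bigwedge^i U_s,$$
from which both the Koszul stability and the rank count will follow. The key point is that each $\phi_i^{I_s,\alpha^s}$ acts on a standard basis element either by $\pm$ projection onto a single basis element of $\bigwedge^i U_s$ or by zero, so the matrix of $\Phi_i$ has all entries in the ground field $k\subset R$. Consequently $\ker \Phi_i$ is an $R$-free direct summand of $L_d^i(F)$ whose rank equals $\rank L_d^i(F)-\rank(\Phi_i\otimes k)$.

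For the stability claim, invoke the commutative square recorded immediately after Definition~\ref{def:generalqmaps}: for each $s$, the Koszul differential commutes with $\phi_i^{I_s,\alpha^s}$. Hence for $x\in\ker\Phi_i$ one has $\phi_{i-1}^{I_s,\alpha^s}\bigl((\kos^\psi\otimes 1)(x)\bigr) = (\kos^\psi\otimes 1)\bigl(\phi_i^{I_s,\alpha^s}(x)\bigr)=0$ for every $s$, so $(\kos^\psi\otimes 1)(x)\in\ker\Phi_{i-1}=L_d^{i-1,B}(F)$.

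For the rank, Corollary~\ref{cor:stackedqranksmaxideal} gives $\rank(\Phi_i\otimes k)=\sum_{s=1}^r\rank(\phi_i^{I_s,\alpha^s}\otimes k)=\rk_i$. Combined with the rank formula $\rank L_d^i(F)=\binom{n+d-1}{i+d}\binom{d+i-1}{i}$ stated right after Proposition~\ref{prop:resnofpower}, this yields the announced value.

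The main step requiring genuine bookkeeping is verifying that the collection $\cat{S}$ in Definition~\ref{def:Lsubmodules} really generates $\ker\Phi_i$, and not merely a submodule of it. Membership of each of the three listed types in $\ker\Phi_i$ is direct: type~1 and type~2 elements vanish under every $\phi_i^{I_s,\alpha^s}$ by the defining formula for $\phi$ together with the lcm hypothesis on the $\alpha^s$ (the same mechanism as in Proposition~\ref{prop:nonearenonzero}), while the type~3 combinations are assembled precisely so that the two nonzero contributions to $\phi_i^{I_s,\alpha^s}$ cancel once signs are tracked. Equality with $\ker\Phi_i$ then follows from a dimension count: the enumeration carried out in the proof of Proposition~\ref{prop:maxpowerqranks} identifies a standard basis of $\operatorname{im}\Phi_i$ coming from the basis elements omitted from $\cat{S}$, so $\cat{S}$ exhausts the kernel and does so freely.
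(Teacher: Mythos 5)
Your proposal is correct and follows essentially the same route the paper takes: identify $L_d^{i,B}(F)$ with $\ker\bigl(\bigoplus_s\phi_i^{I_s,\alpha^s}\bigr)$, appeal to the commutative squares from Definition~\ref{def:generalqmaps} for Koszul stability, and read the rank off from Corollary~\ref{cor:stackedqranksmaxideal} and the enumeration in the proof of Proposition~\ref{prop:maxpowerqranks}. The one place you go slightly further than the paper's terse argument is the explicit observation that the matrix of $\Phi_i$ has entries in $k$, so $\ker\Phi_i$ is automatically a free direct summand and the rank count over $R$ reduces to a $k$-linear rank count; this is left implicit in the paper but is a useful clarification rather than a genuinely different method.
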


\begin{proof}
The first observation is clear by noticing that $\cat{S}$ as in Definition \ref{def:Lsubmodules} generates $\ker \begin{pmatrix}
\phi^{I_1,\alpha^1}_i \\
\phi^{I_2,\alpha^2}_i \\
\vdots \\
\phi^{I_{r},\alpha^{r}}_i \\
\end{pmatrix}$, where each $\phi^{I_s,\alpha^s}_i$ is as in Definition \ref{def:generalqmaps}. The fact that this generates the kernel follows by the proof of Proposition \ref{prop:maxpowerqranks}. For the rank count, observe that the count for each omitted basis element is precisely the count done in the proof of Proposition \ref{prop:maxpowerqranks}. Indeed, the basis elements omitted are precisely the elements whose images form a basis for the image of the $q_i^s$ maps, for each $1 \leq s \leq r$.
\end{proof}

\begin{definition}
Let $\alpha := (\alpha_1 , \dots , \alpha_n)$ be an exponent vector. Define $\supp (\alpha) := \{ i \mid \alpha_i > 0 \}$. If $n_\alpha >1$, define $K^{\alpha^c}_\bullet$ to be the complex induced by the map
\begingroup\allowdisplaybreaks
\begin{align*}
    \psi: K_1^{\alpha} := \bigoplus_{i \notin \supp (\alpha)} Re_i &\to R \\
    e_i &\mapsto x_i .
\end{align*}
\endgroup
If $n_\alpha = 1$, then $K^{\alpha}_\bullet$ is defined to be the $0$ complex.
\end{definition}
It turns out that the following result tells us that the top linear strand of the minimal free resolution quotient defined by the ideals of Theorem \ref{thm:theMinlResn} will always be a direct sum of shifted Koszul complexes.

\begin{prop}\label{prop:easyCokernel}
Adopt notation and hypotheses as in Setup \ref{set:trimmingmaxideal}. Then there is an isomorphism of complexes
$$\Phi_\bullet : \coker \phi^{I,\alpha}_\bullet \to K_\bullet^{\alpha} [ -n_\alpha]$$
\end{prop}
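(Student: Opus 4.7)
The plan is to split into the pure-power case and the general case.

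If $n_\alpha = 1$, so $\alpha = d\epsilon_i$ for some $i$, then Proposition \ref{prop:qrankforpurepower} yields surjectivity of $\phi^{I,\alpha}_\ell$ for $1 \le \ell \le n-1$ (and trivially for $\ell = n$, since $\bigwedge^n U = 0$ as $\rank U = n-1$). Hence $\coker \phi^{I,\alpha}_\bullet = 0$, which matches the convention that $K^\alpha_\bullet$ is the zero complex when $n_\alpha = 1$, so the statement is immediate in this case.

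For the remainder I assume $n_\alpha \ge 2$, in which case $U = F$. The first step is to pin down the image of $\phi^{I,\alpha}_\ell$ inside $\bigwedge^\ell F$ on standard bases. Using the explicit formula $\phi^{I,\alpha}_\ell(\kappa(e_I \otimes e^\beta)) = \sgn(i)\, e_{I \setminus i}$ whenever $i \in I$ and $\beta + \epsilon_i = \alpha$, I claim that a standard basis element $e_J \in \bigwedge^\ell F$ lies in the image if and only if $\supp(\alpha) \not\subseteq J$. The reverse implication is witnessed by $\pm e_J = \phi^{I,\alpha}_\ell(\kappa(e_{J \cup \{i\}} \otimes e^{\alpha - \epsilon_i}))$ for any $i \in \supp(\alpha) \setminus J$; the forward implication follows because any preimage forces $\beta = \alpha - \epsilon_i \geq 0$ for some $i \in I \setminus J$, and hence $i \in \supp(\alpha) \setminus J$. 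Consequently $\im \phi^{I,\alpha}_\ell$ is a free direct summand of $\bigwedge^\ell F$, and $\coker \phi^{I,\alpha}_\ell$ is $R$-free on the residues $\{\bar e_J : \supp(\alpha) \subseteq J,\ |J| = \ell\}$; each such $J$ factors uniquely as $\supp(\alpha) \sqcup J'$ with $J' \subseteq \supp(\alpha)^c$ and $|J'| = \ell - n_\alpha$.

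The cleanest way to construct $\Phi$ and verify compatibility with differentials is via the Künneth-type decomposition
$$\textstyle \bigwedge^\ell F \;\cong\; \bigoplus_{i+j=\ell} \bigwedge^i F_\alpha \otimes \bigwedge^j F',$$
where $F_\alpha := \bigoplus_{k \in \supp(\alpha)} Re_k$ and $F' := \bigoplus_{k \notin \supp(\alpha)} Re_k$. Under this splitting the Koszul differential $\kos^\psi$ takes the standard tensor-product form $\kos^{\psi|_{F_\alpha}} \otimes 1 + (-1)^i\, 1 \otimes \kos^{\psi|_{F'}}$ on the $(i,j)$-summand. By the previous paragraph the cokernel in homological degree $\ell$ is precisely the single summand $\bigwedge^{n_\alpha} F_\alpha \otimes \bigwedge^{\ell-n_\alpha} F' \cong R \otimes \bigwedge^{\ell-n_\alpha} F'$, and the $\kos^{\psi|_{F_\alpha}} \otimes 1$ contribution to the differential lands in summands killed by the cokernel quotient. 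The induced differential on the cokernel is therefore $(-1)^{n_\alpha}$ times the Koszul differential on $\bigwedge^\bullet F' = K^\alpha_\bullet$. Defining $\Phi_\ell(\bar e_{\supp(\alpha) \cup J'}) := (-1)^{\ell\, n_\alpha} e_{J'}$ (or any sign convention $c_\ell$ satisfying $c_\ell - c_{\ell-1} \equiv n_\alpha \pmod 2$) delivers the desired isomorphism of complexes $\coker \phi^{I,\alpha}_\bullet \xrightarrow{\sim} K^\alpha_\bullet[-n_\alpha]$.

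The main obstacle is sign bookkeeping: verifying the Künneth form of $\kos^\psi$ with the correct Koszul sign on each summand, and fixing a coherent identification of the standard basis element $e_{\supp(\alpha) \cup J'}$ with $\pm e_{k_1} \wedge \cdots \wedge e_{k_{n_\alpha}} \wedge e_{J'}$ that is consistent across all homological degrees so that the $(-1)^{n_\alpha}$ produced by the Künneth splitting is absorbed by the chosen signs on $\Phi_\ell$. Once these conventions are pinned down, the bijection of bases from above together with a direct computation on the one surviving Künneth summand establishes both the $R$-module isomorphism at each degree and the chain-map property.
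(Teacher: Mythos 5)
Your proof is correct and reaches the same endpoints as the paper (identifying the cokernel as free on the $\bar e_{\supp(\alpha)\cup J'}$ and producing a signed identification with the shifted Koszul complex $K^\alpha_\bullet[-n_\alpha]$), but the verification of the chain-map property is carried out by a genuinely different mechanism. The paper's proof is a direct diagram chase: it writes out both composites $e_{\supp(\alpha)\cup J'} \mapsto \cdots$ around the commuting square and then verifies the resulting sign identity $\sgn(j \in \supp(\alpha)\cup J)\sgn(J\setminus j \subseteq \supp(\alpha)) = \sgn(J \subset \supp(\alpha))\sgn(j \in J)$ by hand. You instead invoke the decomposition $\bigwedge^\bullet F \cong \bigwedge^\bullet F_\alpha \otimes \bigwedge^\bullet F'$ and the tensor-product form $\kos^{\psi|_{F_\alpha}}\otimes 1 + (-1)^i\,1\otimes\kos^{\psi|_{F'}}$ of the Koszul differential, which makes manifest both that the induced differential on the cokernel is $(-1)^{n_\alpha}\kos^{\psi|_{F'}}$ (since the $\kos^{\psi|_{F_\alpha}}\otimes 1$ part drops into the image of $\phi^{I,\alpha}$) and that a uniform sign $c_\ell$ with $c_\ell c_{\ell-1}^{-1} = (-1)^{n_\alpha}$ repairs it into a chain map. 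This buys a cleaner, more structural verification at the cost of having to set up the Künneth identification and its sign conventions, which is essentially the same bookkeeping the paper does explicitly; neither route is strictly shorter, but yours better explains \emph{why} the sign works out and generalizes more readily. Your identification of $\im\phi^{I,\alpha}_\ell$ with the span of $\{e_J : \supp(\alpha)\not\subseteq J\}$ and the consequent freeness of the cokernel are both correctly argued, and the $n_\alpha = 1$ reduction via Proposition \ref{prop:qrankforpurepower} matches the paper's convention for $K^\alpha_\bullet$.
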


\begin{proof}
If $n_\alpha = 1$, then the claim is true. Assume that $n_\alpha >1$. Observe that $\coker \phi^{I,\alpha}_i$ is free on all basis elements of the form
$$\{ e_{\supp (\alpha) \cup J } \mid \supp (\alpha) \cap J = \varnothing, \ |J| = i - n_\alpha \}.$$
Consider the map
\begingroup\allowdisplaybreaks
\begin{align*}
    \Phi_i : \coker \phi^\alpha_i &\to K^{\alpha}_{i- n_\alpha} \\
    e_{\{k_1 , \dots , k_{n_\alpha} \} \cup J } & \mapsto \sgn (J) e_J. 
\end{align*}
\endgroup
This map is clearly an isomorphism, whence it remains to show that $\Phi_\bullet$ is a morphism of complexes. For $i \geq 1$, consider the diagram
\begin{equation}\label{eq:diagram}
    \xymatrix{\coker \phi_i^{\alpha} \ar[d]^-{\Phi_i} \ar[r]^-{d_i} & \coker \phi_{i-1}^\alpha \ar[d]_-{\Phi_{i-1}} \\
K^{\alpha}_{i - n_\alpha} \ar[r]^-{\kos} & K^{\alpha}_{i-1-n_\alpha} \\}
\end{equation}
Going clockwise around \ref{eq:diagram}, one has:
\begingroup\allowdisplaybreaks
\begin{align*}
    e_{\{k_1 , \dots , k_{n_\alpha} \} \cup J } &\xmapsto{d_i} \sum_{i=1}^{n_\alpha} \sgn (k_i) x_{k_i} e_{\{k_1 , \dots , \widehat{k_i} , \dots k_{n_\alpha} \} \cup J } \\
    &+ \sum_{j \in J} \sgn( j \in \supp (\alpha) \cup J ) x_j e_{\{k_1 , \dots , k_{n_\alpha} \} \cup J \backslash j } \\
    &= \sum_{j \in J} \sgn( j \in \supp (\alpha) \cup J ) x_j e_{\{k_1 , \dots , k_{n_\alpha} \} \cup J \backslash j } \\
    &\xmapsto{\Phi_{i-1}}  \sum_{j \in J} \sgn( j \in \supp (\alpha) \cup J ) \sgn(J \backslash j  \subseteq \supp(\alpha) ) x_j e_{ J \backslash j }. \\
\end{align*}
\endgroup
where the equality in the penultimate line follows by noticing that $\im \phi_i^{I,\alpha}$ is free on basis elements of the form
$$\{ e_{J} \mid \supp(\alpha) \not\subset J, \ |J| = i \}.$$
Moving counterclockwise around \ref{eq:diagram}:
\begingroup\allowdisplaybreaks
\begin{align*}
    e_{\{k_1 , \dots , k_{n_\alpha} \} \cup J } &\xmapsto{\Phi_i} \sgn (J) e_J \\
    &\xmapsto{\kos} \sum_{j \in J } \sgn(J \subset \supp (\alpha) ) \sgn(j \in J) x_j e_{J \backslash j} . 
\end{align*}
\endgroup
To conclude, observe that
$$\sgn( j \in \supp (\alpha) \cup J ) \sgn(J \backslash j  \subseteq \supp(\alpha) ) =  \sgn(J \subset \supp (\alpha) ) \sgn(j \in J).$$
\end{proof}
Combining these building blocks with Theorem \ref{thm:splitMappingCone}, one obtains:

\begin{theorem}\label{thm:theMinlResn}
Adopt notation and hypotheses as in Setup \ref{set:trimmingmaxidealiter}. Let $K'$ be an equigenerated momomial ideal with $\overline{K'} = (x^{\alpha^1}, \dots , x^{\alpha^r})$. Then the minimal free resolution of $R / K'$ is given by the complex
$$F_i := L_d^{i-1,B} \oplus \Big( \bigoplus_{j=1}^{|B|} K_{i- n_{\alpha^j}}^{\alpha^{j}} \Big) \quad (i >0),$$
$$F_0 = R,$$
with differentials
$$\ell_i := \begin{pmatrix}
\kos^\psi \otimes 1 & \Theta_i \\
0 & - \bigoplus_{j=1}^{|B|} \kos^\psi \\
\end{pmatrix}, \quad (i >2),$$
$$\ell_2 := \begin{pmatrix}
\kos^\psi \otimes 1 & \Theta_i \\
\end{pmatrix},$$
$$\ell_1 := S(\psi)|_{L^{0,B}_d}$$
where $\Theta_p$ restricted to each direct summand $K^{\alpha^{s,c}}_{p-n_{\alpha^s}}$ is the map:
\begingroup\allowdisplaybreaks
\begin{align*}
    \Theta_i : K_{p- n_{\alpha^s}}^{\alpha^{j,c}} & \to L^{p-2,B}_d \\
    e_J^{\alpha^j} &\mapsto  \sgn (J) \sum_{i =1}^{n_{\alpha^s}} \sgn (k^s_i) x_{k^s_i}^2 e_{ \{ k^s_1 , \dots , \widehat{k^s_i} , \dots , k^s_{n_{\alpha^s}} \}  \cup J } \otimes e^{\alpha^s - \epsilon_{k^s_i}} \\
        &+ \sgn (J) \sum_{i<j} x_{k^s_i} x_{k^s_j} \big( \sgn(k^s_j) e_{ \{ k^s_1 , \dots , \widehat{k^s_j} , \dots , k^s_{n_{\alpha^s}} \}  \cup J } \otimes e^{\alpha^s - \epsilon_{k^s_i}} \\
        & \qquad \qquad \qquad + \sgn(k^s_i) e_{ \{ k^s_1 , \dots , \widehat{k^s_i} , \dots , k^s_{n_{\alpha^s}} \}  \cup J } \otimes e^{\alpha^s - \epsilon_{k^s_j}} \big) \\
\end{align*}
\endgroup
\end{theorem}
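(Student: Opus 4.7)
The plan is to combine the iterated trimming complex construction (Theorem \ref{thm:itres}) with the Miller--Rahmati splitting result (Theorem \ref{thm:splitMappingCone}). First, I would apply Theorem \ref{thm:itres} using the ideals $\mfa_s$ of Observation \ref{obs:canuseittrimm} and the morphisms $q_\bullet^s = \phi_\bullet^{I_s,\alpha^s}$ from Definition \ref{def:generalqmaps}. This yields an iterated trimming complex resolving $R/K'$ whose top row is $L(\psi,d)$ and whose bottom row is $\bigoplus_s G^s_\bullet$, where $G^s_\bullet$ is the Koszul complex on $\mfa_s$.

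Next, I would apply Theorem \ref{thm:splitMappingCone} to the stacked map $q_k = (q_k^1,\dots,q_k^r)^T$. This requires identifying $A_k := \ker q_k$, $B_k := \im q_k$, and $C_k := \coker q_k$, and verifying that the sequences $0 \to A_k \to F_k \to B_k \to 0$ and $0 \to B_k \to G_k \to C_k \to 0$ split. By Observation \ref{obs:Lranksanddiffer}, the set $\cat{S}$ of Definition \ref{def:Lsubmodules} generates exactly $A_k = L_d^{k,B}(F)$. Proposition \ref{prop:nonearenonzero} shows that the images of the individual $q_k^s$ lie in ``orthogonal'' parts of $\bigoplus_s G^s_k$, so $B_k$ decomposes as a direct sum of the individual images, and Proposition \ref{prop:easyCokernel} gives $C_k \cong \bigoplus_s K^{\alpha^s}_{k-n_{\alpha^s}}$. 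Splittings for both short exact sequences come from choosing bases compatible with the generators $\cat{S}$, the explicit preimages enumerated in the proof of Proposition \ref{prop:maxpowerqranks}, and the cokernel basis of Proposition \ref{prop:easyCokernel}. Theorem \ref{thm:splitMappingCone} then extracts the minimal summand $F_i = A_{i-1} \oplus C_i$ with differential of the stated block form.

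The third step is to compute $\Theta_p$ explicitly by following the prescribed composition. On a generator $e_J^{\alpha^s} \in K^{\alpha^s}_{p - n_{\alpha^s}}$, one lifts via $\Phi^{-1}$ to $\sgn(J)\,e_{\supp(\alpha^s) \cup J}$ in $G^s_p$, applies the Koszul differential, then lifts through $q_{p-1}^s$ using the standard preimages from Proposition \ref{prop:maxpowerqranks}, applies $\kos^\psi \otimes 1$, and finally projects onto $L_d^{p-2,B}$ modulo the relations $\cat{S}$. The quadratic terms $x_{k_i^s}^2$ arise when the same variable appears from both the Koszul step on $G^s$ and the $L$-complex step on $F$, while the cross terms $x_{k_i^s}x_{k_j^s}$ arise precisely from applying the third type of relation in $\cat{S}$ to reduce the result modulo $\ker q_{p-2}^s$. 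Minimality of $F_\bullet$ is then immediate: the block-diagonal pieces of $\ell_i$ are linear (restrictions of Koszul/$L$-differentials) and $\Theta_p$ is quadratic, so every entry lies in $\mathfrak m$.

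The main obstacle will be the explicit computation of $\Theta_p$: tracking signs through the four successive maps (Koszul on $G^s$, inclusion into $F_{p-1}$, $L$-complex differential, and projection modulo $\cat{S}$) is delicate, and one must also verify that the formula is independent of the chosen splitting of $0 \to A_k \to F_k \to B_k \to 0$. The identification of the cross terms $x_{k_i^s}x_{k_j^s}$ with applications of the third relation in $\cat{S}$ is the heart of the computation and drives the particular form of the stated formula.
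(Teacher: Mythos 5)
Your proposal is correct and follows essentially the same route as the paper: apply Theorem \ref{thm:splitMappingCone} to the iterated trimming morphism, identify the kernel, image, and cokernel of the stacked $q$-maps via Definition \ref{def:Lsubmodules}, Proposition \ref{prop:nonearenonzero}/Corollary \ref{cor:stackedqranksmaxideal}, and Proposition \ref{prop:easyCokernel}, and then compute $\Theta$ by tracing through the six-step composition given in Theorem \ref{thm:splitMappingCone}. Your one slight imprecision is the attribution of the cross terms $x_{k_i^s}x_{k_j^s}$: both the quadratic and the cross terms already appear in the double sum produced by the two Koszul-type steps $m_k$ and $\kos^\psi\otimes 1$ (one factor of $x_{k_\bullet^s}$ from each), and the third-type generators of $\cat{S}$ enter only at the final projection onto $A_{p-2}=L_d^{p-2,B}$, where they serve to re-express the $(i,j)$ and $(j,i)$ pairs in the chosen basis.
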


\begin{proof}
Using Theorem \ref{thm:splitMappingCone}, the proof comes down to computing the map $\Theta_i$ explicitly. For ease of notation/computation, assume that $|B|=1$. It will be understood that in the case $|B| >1$, this computation yields the restriction of $\Theta_p$ to each direct summand. 

Let $p \geq 2$; one computes the image of an arbitrary $e_J \in K^{\alpha^c}_{p - n_\alpha}$ under $\Theta_p$:
\begingroup\allowdisplaybreaks
\begin{align*}
    e_J &\xmapsto{\textrm{Prop} \ \ref{prop:easyCokernel}} \sgn (J) e_{ \{k_1 , \dots , k_{n_\alpha} \} \cup J} \\
        &\xmapsto{\kos^\psi} \sgn(J) \sum_{j \in J } \sgn (j ) x_j e_{\{k_1 , \dots , k_t \}\cup J} \\
        &+ \sgn (J) \sum_{i=1}^{n_{\alpha}} \sgn (k_i) x_{k_i} e_{ \{ k_1 , \dots , \widehat{k_i} , \dots , k_{n_\alpha} \} \cup J} \\
        &\xmapsto{\textrm{projection}} \sgn (J) \sum_{i=1}^{n_{\alpha}} \sgn (k_i) x_{k_i} e_{ \{ k_1 , \dots , \widehat{k_i} , \dots , k_{n_\alpha} \} \cup J} \\
        &\xmapsto{\textrm{inclusion}} \sgn (J) \sum_{i=1}^{n_{\alpha}}  x_{k_i} e_{\{ k_1 , \dots , k_{n_\alpha} \} \cup J} \otimes e^{\alpha - \epsilon_{k_i}} \\
        &\xmapsto{\kos^\psi \otimes 1} \sgn (J) \sum_{i=1}^{n_{\alpha}}  \sgn (j \in J) x_{k_i} x_j e_{ \{ k_1 , \dots , k_{n_\alpha} \}  \cup J \backslash j} \otimes e^{\alpha - \epsilon_{k_i}} \\
        &+ \sgn(J) \sum_{i,j =1}^{n_\alpha}  \sgn (k_j) x_{k_i} x_{k_j} e_{ \{ k_1 , \dots , \widehat{k_j} , \dots , k_{n_\alpha} \}  \cup J } \otimes e^{\alpha - \epsilon_{k_i}} \\ 
        &\xmapsto{\textrm{projection}} \sgn (J) \sum_{i,j =1}^{n_\alpha} \sgn (k_j) x_{k_i} x_{k_j} e_{ \{ k_1 , \dots , \widehat{k_j} , \dots , k_{n_\alpha} \}  \cup J } \otimes e^{\alpha - \epsilon_{k_i}} \\ 
        &= \sgn (J) \sum_{i =1}^{n_\alpha} \sgn (k_i) x_{k_i}^2 e_{ \{ k_1 , \dots , \widehat{k_i} , \dots , k_{n_\alpha} \}  \cup J } \otimes e^{\alpha - \epsilon_{k_i}} \\
        &+ \sgn (J) \sum_{i<j} x_{k_i} x_{k_j} \big( \sgn(k_j) e_{ \{ k_1 , \dots , \widehat{k_j} , \dots , k_{n_\alpha} \}  \cup J } \otimes e^{\alpha - \epsilon_{k_i}} \\
        & \qquad \qquad \qquad + \sgn(k_i) e_{ \{ k_1 , \dots , \widehat{k_i} , \dots , k_{n_\alpha} \}  \cup J } \otimes e^{\alpha - \epsilon_{k_j}} \big). \\
\end{align*}
\endgroup
The final equality of the above is written in terms of the basis elements of $L^{p-2,B}_d$
\end{proof}

As a Corollary, we obtain the previously mentioned fact that the minimal free resolution in the case that the complementary ideal consists of pure powers is obtained by simply restricting to the subcomplex $A_\bullet$ (with notation as in Theorem \ref{thm:splitMappingCone}).

\begin{cor}\label{cor:resnfordeletingpurepowers}
Adopt notation and hypotheses as in Setup \ref{set:trimmingmaxidealiter}, with $\alpha_s = d \epsilon_{k_s}$ for $1 \leq s \leq r \leq n$, where $B=\{ d \epsilon_{k_1} < \cdots < d \epsilon_{k_r} \}$. Let $K'$ be an equigenerated momomial ideal with $\overline{K'} = (x_{k_1}^d, \dots , x_{k_r}^d)$. Then the minimal free resolution of $R / K'$ is given by the complex
\begin{equation*}
    \begin{split}
        &L^B(\psi , d) : \xymatrix{0 \ar[r] & L_d^{n-1,B} \ar[rr]^-{\kos^\psi \otimes 1} & & \cdots \ar[rr]^{\kos^\psi \otimes 1} & & L_d^{0,B} \ar[r]^-{S_d (\psi)} & R \ar[r] & 0. } \\
    \end{split}
\end{equation*}
\end{cor}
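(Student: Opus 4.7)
The plan is to obtain this corollary as a direct specialization of Theorem \ref{thm:theMinlResn}, exploiting the fact that each exponent vector $\alpha^s = d\epsilon_{k_s}$ has singleton support.

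First I would observe that with $\alpha^s = d\epsilon_{k_s}$, one has $\supp(\alpha^s) = \{k_s\}$, so $n_{\alpha^s} = 1$ for every $s$. By the definition preceding Proposition \ref{prop:easyCokernel}, each Koszul-type complex $K^{\alpha^s}_\bullet$ is then the zero complex. Consequently, the direct summand $\bigoplus_{j=1}^{|B|} K^{\alpha^j}_{i-n_{\alpha^j}}$ appearing in the degree-$i$ term $F_i$ of Theorem \ref{thm:theMinlResn} vanishes identically, and $F_i$ collapses to $L_d^{i-1, B}$, exactly as claimed.

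Next I would note that the connecting maps $\Theta_i$ are automatically zero because their domain $\bigoplus_j K^{\alpha^j}_{i-n_{\alpha^j}}$ is zero. Hence the differentials $\ell_i$ of Theorem \ref{thm:theMinlResn} collapse to the single block $\kos^\psi \otimes 1$ acting on $L_d^{i-1, B}$. To confirm this really defines the complex in the statement, Remark \ref{rk:submodsSimple} identifies $L_d^{i, B}$ as the free submodule of $L_d^i(F)$ generated by all standard basis elements except those of the form $e_{\{k_s\} \cup J} \otimes e_{k_s}^{d-1}$ (for $1 \leq s \leq r$ and $k_s \notin J$); by Observation \ref{obs:Lranksanddiffer}, this submodule is precisely the kernel of the stacked map $\bigoplus_s \phi^{I_s, \alpha^s}_i$, and therefore forms a subcomplex on which $\kos^\psi \otimes 1$ restricts well.

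There is no substantial obstacle here, since the Corollary is a clean specialization of Theorem \ref{thm:theMinlResn}; the only work is bookkeeping, to confirm that the $K^\alpha_\bullet$-summands and the $\Theta_i$-maps trivialize when each $\alpha^s$ is a pure power. If one prefers a self-contained derivation, it is equally natural to deduce the Corollary directly from Theorem \ref{thm:splitMappingCone}: by Proposition \ref{prop:qrankforpurepower} each individual map $\phi^{I_s, \alpha^s}$ is surjective, and the stacked map inherits surjectivity via the rank computation of Corollary \ref{cor:stackedqranksmaxideal}, so every cokernel $C_k$ in Theorem \ref{thm:splitMappingCone} vanishes. The mapping cone then reduces, up to split-exact summands, to the kernel complex, which is exactly $L^B(\psi, d)$.
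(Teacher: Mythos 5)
Your proposal is correct, and your second (``self-contained'') route is exactly the paper's proof: the paper invokes Theorem \ref{thm:splitMappingCone} together with Proposition \ref{prop:qrankforpurepower} to conclude that every cokernel $C_k$ vanishes, so the mapping cone reduces to the kernel subcomplex $L^B(\psi,d)$. Your first route, specializing Theorem \ref{thm:theMinlResn} and noting that $n_{\alpha^s}=1$ forces each summand $K^{\alpha^s}_\bullet$ and each map $\Theta_i$ to be zero, is an equally valid derivation of the same fact.
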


\begin{proof}
This follows immediately by Theorem \ref{thm:splitMappingCone}, since $C_k = 0$ for all $k$ by Proposition \ref{prop:qrankforpurepower}.
\end{proof}

Next, we define the necessary building blocks in the squarefree case. This resolution will be much simpler to describe, since we have already seen that these ideals have linear minimal free resolutions. This implies that, as in Corollary \ref{cor:resnfordeletingpurepowers}, the minimal free resolution is obtained by taking the kernel of an appropriate morphism of complexes.

\begin{definition}\label{def:Fsubmodules}
Adopt notation and hypotheses as in Setup \ref{set:ittrimmingsqfree}, with $\I = \{ I_1 , \dotsc , I_r \}$. For each $i$, define the free submodule $F^{d,n,\I}_i \subseteq F^{d,n}_i$ to be generated by the following collections of basis elements, denoted $\cat{T}$ (all terms appearing are assumed to be standard tableau with strictly increasing columns and rows; recall that Notation \ref{notation:simplifyingTableaux} is in play here):
{\small
$$\begin{cases}
f_J \otimes f^{L} & \textrm{if} \ J \neq I_s \backslash \{ i_{ps} \} \ \textrm{for any} \ p, \ s, \\ 
 \\
f_J \otimes f^{I_s - \epsilon_{i_{ps}}} & \textrm{if} \  i_{ps} \notin J \ \textrm{for any} \ p, \ s, \\
\\
\sgn (i_{ps}) f_{J \cup \{ i_{ps} \}} \otimes f^{I_s - \epsilon_{i_{ps}}} + \sgn(i_{1s}) f_{  J \cup i_{1s}} \otimes f^{I_s - \epsilon_{i_{1s}}} & \textrm{where} \ I_s \cap J = \varnothing, \\
\end{cases}$$}
where $J = (j_0 < \dotsc < j_{i-1})$, $1 \leq s \leq r$, and $1 < p \leq d$.
\end{definition}

\begin{obs}\label{obs:Franksanddiffer}
Adopt notation and hypotheses as in Setup \ref{set:ittrimmingsqfree}, with $\I = \{ I_1 , \dotsc , I_r \}$. Let $F^{d,n,\I}_i$ denote the submodule of Definition \ref{def:Fsubmodules}. Then the differential $\partial_i^{d,n} : F_i^{d,n} \to F_{i-1}^{d,n}$ induces a differential
$$\partial_i^{d,n} : F_i^{d,n,\I} \to F_{i-1}^{d,n,\I}.$$
Moreover, 
\begin{align*}
    \rank F_i^{d,n,\I} &= \rank F_i^{d,n} - r \cdot \binom{n-d}{i-1} \\
    &= \binom{n}{d+i-1} \binom{d+i-2}{i-1} - r \binom{n-d}{i-1}. \\
\end{align*}
\end{obs}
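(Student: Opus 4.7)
The plan is to mirror the proof of Observation \ref{obs:Lranksanddiffer}, with the Schur-module picture replaced by its Specht-module counterpart. My first step is to identify $F^{d,n,\I}_i$ with the kernel of the stacked map
$$\Psi_i := \begin{pmatrix} \psi^{I_1, I_1^c} \\ \vdots \\ \psi^{I_r, I_r^c} \end{pmatrix}: F^{d,n}_i \longrightarrow \bigoplus_{s=1}^{r} \bigwedge\nolimits^{i-1} U_s,$$
by checking that the three classes of elements defining $\cat{T}$ in Definition \ref{def:Fsubmodules} give a generating set for $\ker \Psi_i$. The first class consists of tabloids whose row content is not of the form $f^{I_s - \epsilon_{i_{ps}}}$ (hence killed by every component of $\Psi_i$); the second consists of tabloids missing the required pivot $i_{ps}$ in their column (killed by the $s$-th component, while the other components vanish via the squarefree analog of Proposition \ref{prop:nonearenonzero} stated just before Corollary \ref{cor:stackedqranksmaxidealsqfree}); and the third encodes the sign-twisted straightening relations among the nonzero images of a single component. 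The verification here is the Specht-module transcription of the rank-counting argument in Proposition \ref{prop:spechtqranks}.

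With the kernel identification established, both parts of the observation follow quickly. Since each $\psi^{I_s, I_s^c}$ is a chain map by the commuting diagram in Definition \ref{def:spechtqmaps}, the stacked $\Psi_\bullet$ is also a chain map, and the kernel of a morphism of complexes is a subcomplex; hence $\partial_i^{d,n}$ restricts to a well-defined differential on the submodules $F^{d,n,\I}_\bullet$. For the rank count, rank-nullity combined with Proposition \ref{prop:spechtqranks} (each component of $\Psi_i$ has rank $\binom{n-d}{i-1}$) and Corollary \ref{cor:stackedqranksmaxidealsqfree} (the component ranks stack without overlap under the hypothesis $|I_s \cap I_t| \leq d-2$) yields $\rank F^{d,n,\I}_i = \rank F^{d,n}_i - r\binom{n-d}{i-1}$. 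The closed form $\rank F^{d,n}_i = \binom{n}{d+i-1}\binom{d+i-2}{i-1}$ is simply the product of the induction index $\binom{n}{d+i-1}$ and the dimension $\dim_k S^{(d, 1^{i-1})} = \binom{d+i-2}{i-1}$ of the hook Specht module, as visible from Definition \ref{def:sqfreecomplexpieces}.

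The main obstacle I anticipate is the bookkeeping in the first step, namely verifying that the $\cat{T}$-elements span all of $\ker \Psi_i$ rather than a proper submodule. The delicate point is ensuring that the shuffling relations cutting out $S^{(d, 1^{i-1})}$ as a quotient of $M^{(d, 1^{i-1})}$ do not introduce further linear dependencies beyond those already encoded in the third class of $\cat{T}$. The sign-tracking required is essentially the same as in the proof of Proposition \ref{prop:spechtqranks}: for each ideal generator $x^{I_s}$, the ``pivot'' tabloids whose images under $\psi^{I_s, I_s^c}$ give basis elements of $\bigwedge^{i-1} U_s$ are parameterized by $1 \leq p \leq d$, and any two differ by a sign-twisted column exchange, producing exactly the binomial relations comprising the third class of $\cat{T}$.
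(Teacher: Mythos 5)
Your proposal is correct and takes essentially the same approach as the paper's proof: both identify $F^{d,n,\I}_i$ with the kernel of the stacked map $\Psi_i$, from which the induced differential is immediate. The only cosmetic difference lies in the rank count, where the paper directly enumerates the omitted standard tableaux (those with first column $J' \cup \{i_{1s}\}$, $J' \cap I_s = \{i_{1s}\}$, and first row $(i_{2s}, \dots, i_{ds})$) while you instead invoke rank--nullity together with Proposition \ref{prop:spechtqranks} and Corollary \ref{cor:stackedqranksmaxidealsqfree}; the two computations are equivalent.
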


\begin{proof}
The first claim follows after noting that $F_i^{d,n,\cat{I}}$ generates $\ker  \begin{pmatrix}
\psi^{I_1,I_1^c}_\ell \\
\psi^{I_2,I_2^c}_\ell \\
\vdots \\
\psi^{I_{r},I_r^c}_\ell \\
\end{pmatrix}$, where each $\psi^{I_s,I_s^c}_\ell$ is as in Definition \ref{def:spechtqmaps}. For the second claim, fix an indexing set $I_s = ( i_{1s} < \cdots < i_{ds} )$. The module $F^{d,n,\cat{I}}_i$ omits precisely all standard basis elements of the form
$$\ytableausetup{boxsize=2em}\begin{ytableau}
\vdots & i_{2s} & \cdots & i_{ds} \\
J' \\
\vdots \\
\end{ytableau},$$
where $J' = (j_0' < \cdots < j_{i-1}' )$ and $J' \cap I_s = \{ i_{1s} \}$; there are $\binom{n-d}{i-1}$ such choices for $J'$ and $r$ choices of $s$, so the result follows. 
\end{proof}

\begin{theorem}\label{thm:resofsqfreetrimmed}
Adopt notation and hypotheses as in Setup \ref{set:ittrimmingsqfree}, with $\I = \{ I_1 , \dotsc , I_r \}$. Let $K'$ be a squarefree equigenerated momomial ideal with $\overline{K'} = (x^{I_1} , \dots , x^{I_r} )$. Then the minimal free resolution of $R / K'$ is given by the complex
\begin{equation*}
    \begin{split}
        &F^{d,n,\I}_\bullet : \xymatrix{0 \ar[r] & F^{d,n,\I}_{n-d+1} \ar[rr]^-{\partial_{n-d}^{d,n}} & & \cdots \ar[rr]^{\partial_1^{d,n}} & &F^{d,n,\I}_1 \ar[r] & R \ar[r] & 0. } \\
    \end{split}
\end{equation*}
\end{theorem}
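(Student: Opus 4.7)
The plan is to follow the exact template of Corollary \ref{cor:resnfordeletingpurepowers}, invoking the splitting-of-mapping-cones theorem (Theorem \ref{thm:splitMappingCone}) with the key input being the surjectivity of the stacked $q$-maps in the squarefree setting. By Observation \ref{obs:canuseittrimmsqfree}, the iterated trimming complex of Theorem \ref{thm:itres} does produce a resolution of $R/K'$, where the component Koszul resolutions $G^s_\bullet$ are those of $R/\mfa_s$ with $\mfa_s = (x_j \mid j \notin I_s)$ of grade $n-d$, and the morphism of complexes is given (up to the index shift in Theorem \ref{thm:splitMappingCone}) by the maps $\psi^{I_s,I_s^c}_\bullet$ of Definition \ref{def:spechtqmaps}.

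The decisive observation is that in the notation of Theorem \ref{thm:splitMappingCone}, the cokernels $C_k$ vanish for every $k$. Indeed, Proposition \ref{prop:spechtqranks} shows that each individual map $\psi^{I_s,I_s^c}_\ell$ is surjective for $1 \leq \ell \leq n-d$, and Corollary \ref{cor:stackedqranksmaxidealsqfree} shows the ranks add, so the stacked map
\[
\bigl(\psi^{I_1,I_1^c}_\ell,\ldots,\psi^{I_r,I_r^c}_\ell\bigr): F^{d,n}_\ell \longrightarrow \bigoplus_{s=1}^{r} \textstyle\bigwedge^\ell U_s
\]
is itself surjective. For $\ell = n-d+1$ the codomain is automatically zero. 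With all $C_k = 0$, the two short exact sequences required by Theorem \ref{thm:splitMappingCone} split trivially (the second is $0 \to G_k \to G_k \to 0$, and the first splits because the quotient $B_k = G_k$ is free). Moreover, every connecting map $\Theta_k$ factors through $C_k = 0$ and hence vanishes.

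Applying Theorem \ref{thm:splitMappingCone}, the mapping cone (which by Theorem \ref{thm:itres} resolves $R/K'$) decomposes as a split exact complex plus the subcomplex $A_\bullet = \ker q_\bullet$ of $F^{d,n}_\bullet$ equipped with the restricted differentials $\partial^{d,n}_\bullet$. Consequently $A_\bullet$ itself is a free resolution of $R/K'$. It remains to identify $A_\bullet$ with the complex $F^{d,n,\I}_\bullet$ of Definition \ref{def:Fsubmodules}: this is precisely the content of Observation \ref{obs:Franksanddiffer}, whose verification that the listed set $\cat{T}$ generates the kernel of the stacked map is a direct tableau-level analogue of the argument used in Proposition \ref{prop:maxpowerqranks} (together with Proposition \ref{prop:nonearenonzero} guaranteeing that elements associated to distinct $I_s$ do not interfere). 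Minimality of $F^{d,n,\I}_\bullet$ is inherited: the differential is the restriction of $\partial^{d,n}_\bullet$, which has entries in the graded maximal ideal since $F^{d,n}_\bullet$ is minimal (Theorem \ref{thm:fredsres}), and the resulting Betti numbers match Corollary \ref{cor:sqfreesubsetbtable}.

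The only substantive step is the identification $A_\bullet = F^{d,n,\I}_\bullet$, i.e.\ checking that the explicit spanning set $\cat{T}$ of Definition \ref{def:Fsubmodules} is an honest generating set for $\ker q_\bullet$ and not merely contained in it. This is precisely the rank-counting argument encoded in Observation \ref{obs:Franksanddiffer}, which I expect to be the main technical content of the proof; everything else is an immediate application of Theorem \ref{thm:splitMappingCone} in the favorable case $C_k \equiv 0$.
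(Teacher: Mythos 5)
Your proof is correct and follows the same route the paper takes: invoke Theorem \ref{thm:splitMappingCone} together with the surjectivity statement of Proposition \ref{prop:spechtqranks} (and Corollary \ref{cor:stackedqranksmaxidealsqfree} for the stacked map, which the paper leaves implicit) to conclude $C_k = 0$ and hence that the kernel subcomplex $A_\bullet = F^{d,n,\I}_\bullet$ is the resolution. You have merely spelled out the steps the paper compresses into a single sentence, including the Nakayama step turning rank-equality into honest surjectivity over $R$ and the identification of $A_\bullet$ via Observation \ref{obs:Franksanddiffer}.
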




\begin{proof}
This follows immediately by combining Theorem \ref{thm:splitMappingCone} with Proposition \ref{prop:spechtqranks}.
\end{proof}


We conclude with some questions about additional structure on the complexes above. Firstly, it is well known the the $L$-complexes admit the structure of an associative DG-algebra. Likewise, the complex constructed by Galetto will also admit the structure of an associative DG-algebra, since the squarefree Elihou-Kervaire complex admits such a structure by work of Peeva (see \cite{peeva19960}). One is then tempted to ask:
\begin{question}
Do the complexes of Theorem \ref{thm:theMinlResn} or Theorem \ref{thm:resofsqfreetrimmed} admit the structure of an associative DG-algebra?
\end{question}
Similarly, it is well known that the (squarefree) Eliahou-Kervaire resolution is cellular by work of Mermin (see \cite{mermin2010eliahou}). Since one can reformat the above constructions of this section in terms of taking kernels/cokernels of the Eliahou-Kervaire resolution, we also pose:
\begin{question}
Are the complexes of Theorem \ref{thm:theMinlResn} or Theorem \ref{thm:resofsqfreetrimmed} cellular?
\end{question}

\bibliographystyle{amsplain}
\bibliography{biblio}

\providecommand{\bysame}{\leavevmode\hbox to3em{\hrulefill}\thinspace}
\providecommand{\MR}{\relax\ifhmode\unskip\space\fi MR }
\providecommand{\MRhref}[2]{%
  \href{http://www.ams.org/mathscinet-getitem?mr=#1}{#2}
}
\providecommand{\href}[2]{#2}
\begin{thebibliography}{10}

\bibitem{aramova1998squarefree}
Annetta Aramova, J{\"u}rgen Herzog, and Takayuki Hibi, \emph{Squarefree
  lexsegment ideals}, Mathematische Zeitschrift \textbf{228} (1998), no.~2,
  353--378.

\bibitem{bayer1998cellular}
Dave Bayer and Bernd Sturmfels, \emph{Cellular resolutions of monomial
  modules}, Journal f{\"u}r die reine und angewandte Mathematik \textbf{1998}
  (1998), no.~502, 123--140.

\bibitem{buchsbaum1975generic}
David~A Buchsbaum and David Eisenbud, \emph{Generic free resolutions and a
  family of generically perfect ideals}, Advances in Mathematics \textbf{18}
  (1975), no.~3, 245--301.

\bibitem{eagon2019minimal}
John Eagon, Ezra Miller, and Erika Ordog, \emph{Minimal resolutions of monomial
  ideals}, arXiv preprint arXiv:1906.08837 (2019).

\bibitem{el2014artinian}
Sabine El~Khoury and Andrew~R Kustin, \emph{Artinian gorenstein algebras with
  linear resolutions}, Journal of Algebra \textbf{420} (2014), 402--474.

\bibitem{eliahou1990minimal}
Shalom Eliahou and Michel Kervaire, \emph{Minimal resolutions of some monomial
  ideals}, Journal of Algebra \textbf{129} (1990), no.~1, 1--25.

\bibitem{fulton1997young}
William Fulton, \emph{Young tableaux: with applications to representation
  theory and geometry}, vol.~35, Cambridge University Press, 1997.

\bibitem{galetto2020ideal}
Federico Galetto, \emph{On the ideal generated by all squarefree monomials of a
  given degree}, Journal of Commutative Algebra \textbf{12} (2020), no.~2,
  199--215.

\bibitem{gasharov2002resolutions}
Vesselin Gasharov, Takayuki Hibi, and Irena Peeva, \emph{Resolutions of
  a-stable ideals}, Journal of Algebra \textbf{254} (2002), no.~2, 375--394.

\bibitem{ha2006resolutions}
Huy~T{\`a}i H{\`a} and Adam Van~Tuyl, \emph{Resolutions of square-free monomial
  ideals via facet ideals: a survey}, arXiv preprint math/0604301 (2006).

\bibitem{mermin2010eliahou}
Jeffrey Mermin, \emph{The eliahou-kervaire resolution is cellular}, Journal of
  Commutative Algebra \textbf{2} (2010), no.~1, 55--78.

\bibitem{miller2018free}
Claudia Miller and Hamidreza Rahmati, \emph{Free resolutions of artinian
  compressed algebras}, Journal of Algebra \textbf{497} (2018), 270--301.

\bibitem{miller2004combinatorial}
Ezra Miller and Bernd Sturmfels, \emph{Combinatorial commutative algebra}, vol.
  227, Springer Science \& Business Media, 2004.

\bibitem{peeva19960}
Irena Peeva, \emph{0-borel fixed ideals}, Journal of Algebra \textbf{184}
  (1996), no.~3, 945--984.

\bibitem{reisner1976cohen}
Gerald~Allen Reisner, \emph{Cohen-macaulay quotients of polynomial rings},
  Advances in Mathematics \textbf{21} (1976), no.~1, 30--49.

\bibitem{taylor1966ideals}
Diana~Kahn Taylor, \emph{Ideals generated by monomials in an r-sequence,
  proquest llc}, Ann Arbor, MI (1966).

\bibitem{vandebogert2021resolution}
Keller VandeBogert, \emph{Resolution and tor algebra structures of grade 3
  ideals defining compressed rings}, Journal of Algebra \textbf{586} (2021),
  140--153.

\bibitem{vandebogert2021trimming}
\bysame, \emph{Trimming complexes and applications to resolutions of
  determinantal facet ideals}, Communications in Algebra \textbf{49} (2021),
  no.~3, 1017--1036.

\bibitem{weyman2003}
Jerzy Weyman, \emph{Cohomology of vector bundles and syzygies}, vol. 149,
  Cambridge University Press, 2003.

\end{thebibliography}
\addcontentsline{toc}{section}{Bibliography}

\end{document}